\theoremstyle{plain}
\newtheorem{theorem}{{Theorem}}[section] 
\newtheorem*{theorem*}{{Theorem}}
\newtheorem{proposition}[theorem]{Proposition}
\newtheorem*{proposition*}{Proposition}
\newtheorem{corollary}[theorem]{Corollary}
\newtheorem*{corollary*}{Corollary}
\newtheorem{lemma}[theorem]{Lemma}
\newtheorem*{lemma*}{Lemma}
\newtheorem{assumption}[theorem]{Assumption}
\newtheorem*{assumption*}{Assumption}
\newtheorem{definition}[theorem]{Definition}
\newtheorem*{definition*}{Definition}
\theoremstyle{remark}
\newtheorem*{notation*}{Notation}
\newtheorem*{remark*}{Remark}
\newcommand{\Cov}{\mathrm{Cov}}
\newcommand{\Trace}{\mathrm{Tr}}
\begin{document}

\begin{frontmatter}

\title{$U$-statistics on bipartite exchangeable networks}
%\title{A sample article title with some additional note\thanksref{T1}}
\runtitle{$U$-statistics on bipartite exchangeable networks}
%\thankstext{T1}{A sample of additional note to the title.}

\begin{aug}

\author[A]{\fnms{T\^am} \snm{Le Minh}\ead[label=e1]{tam.le-minh@inrae.fr}}

\address[A]{Universit\'e Paris-Saclay, AgroParisTech, INRAE, UMR MIA Paris-Saclay, 91120, Palaiseau, France, \printead{e1}}
\end{aug}

\begin{abstract}
Bipartite networks with exchangeable nodes can be represented by row-column exchangeable matrices. A quadruplet is a submatrix of size $2 \times 2$. A quadruplet $U$-statistic is the average of a function on a quadruplet over all the quadruplets of a matrix. We prove several asymptotic results for quadruplet $U$-statistics on row-column exchangeable matrices, including a weak convergence result in the general case and a central limit theorem when the matrix is also dissociated. These results are applied to statistical inference in network analysis. We suggest a method to perform parameter estimation, network comparison and motifs count for a particular family of row-column exchangeable network models: the bipartite expected degree distribution (BEDD) models. These applications are illustrated by simulations.
\end{abstract}

\begin{keyword}[class=MSC2020]
\kwd[Primary ]{60F05}
\kwd[; secondary ]{60G09}
\kwd{62G20}
\end{keyword}

\begin{keyword}
\kwd{$U$-statistics}
\kwd{row-column exchangeability}
\kwd{central limit theorem}
\kwd{network inference}
\end{keyword}

\end{frontmatter}

\section{Introduction \label{sec:intro}}

\paragraph{RCE matrices} Networks arise naturally when considering interaction data. The nodes of a network represent the entities of a system and an edge between two nodes represents the interaction between the associated entities. The network is bipartite when there are two different sets of nodes, and edges only link nodes of different types. A natural representation for a bipartite network is its rectangular adjacency matrix. The rows and columns of an adjacency matrix $Y$ represent the two different types of nodes and each entry $Y_{ij}$ encodes the interaction between the nodes associated to row $i$ and column $j$, e.g. for binary networks, $Y_{ij} = 1$ if $i$ and $j$ interact and $Y_{ij} = 0$ else, or for weighted networks, $Y_{ij}$ is the weight of the edge linking $i$ and $j$.

Many probabilistic network models assume that the network units, either edges or nodes, are exchangeable, i.e. are invariant by permutation. In the adjacency matrix of a bipartite network, the edge-exchangeability corresponds to the exchangeability of all its entries (full exchangeability), such as in \cite{adamczak2016circular}, while the node-exchangeability refers to the exchangeability of its rows and columns, such as in \cite{aldous1981representations}.

Recent developments have been made for edge-exchangeable models \citep{cai2016edge, williamson2016nonparametric}, but node-exchangeable models have a longer history for both unipartite and bipartite networks and encompasses families of models such as the stochastic block model \citep{holland1983stochastic, snijders1997estimation}, the latent block model \citep{govaert2003clustering}, the latent position model \citep{hoff2002latent} or the random dot product graph model \citep{young2007random}. Implicitly, the exchangeability of the network units is associated with a sampling assumption. The choice of whether considering the exchangeability for edges or nodes depends on what is assumed to be sampled to observe the networks, whether it be edges or nodes \citep{crane2018edge}.  

Let us observe a bipartite network represented by a finite submatrix of size $m \times n$. We assume that the nodes of the same type are infinitely exchangeable, which means that the row elements and the column elements of the adjacency matrix are separately invariant under infinite permutation. The infinite exchangeability assumption is equivalent to considering that this observed network is made of the first $m$ rows and $n$ columns of an infinite adjacency matrix, whose rows and columns are exchangeable. This assumption is similar to \cite{orbanz2014bayesian} for unipartite networks and provides a consistent framework to analyze networks of different sizes. It can be used with many random network models, including the ones listed above (stochastic block model, latent block model, latent position model and random dot product graph model). 

However, it has to be distinguished from the finitely exchangeable case. Finite exchangeability does not imply infinite exchangeability, for example, if the observed network consists of the first $m$ rows and $n$ columns of a larger adjacency matrix but of finite size. In that case, we say that the (finitely) exchangeable sequences of nodes are not infinitely extendible \citep{konstantopoulos2019extendibility, mai2020infinite}. The finitely exchangeable case for networks has been notably studied by \cite{lauritzen2018random}, but is out of scope of our paper. From here, the concept of exchangeability will always refer to infinite exchangeability, unless explicitely specified.

Thus, the exchangeability property of our infinite adjacency matrices is called row-column exchangeability. Let $\mathbb{S}_\infty$ be the group of finite permutations over $\mathbb{N}$. An infinite matrix $Y$ is said to be row-column exchangeable (RCE) if for any couple $\Phi = (\sigma_1,\sigma_2) \in \mathbb{S}_\infty^2$, 
\begin{equation*}
    \Phi Y \overset{\mathcal{D}}{=} Y,
\end{equation*}
where $\Phi Y := (Y_{\sigma_1(i)\sigma_2(j)})_{i \ge 1, j \ge 1}$. 

\paragraph{U-statistics} $U$-statistics form a large class of statistics with interesting properties for many purposes such as estimation and hypothesis testing. We are interested in using them to analyze RCE networks. 

Given a sequence of random variables  $(Y_1, Y_2, ..., Y_n)$ numbered with a unique index, a $U$-statistic is defined as the following average
\begin{equation}
    U^h_n = \binom{n}{k}^{-1} \sum_{1 \le i_1 < i_2 < ... < i_k \le n} h(Y_{i_1}, Y_{i_2}, ..., Y_{i_k}),
    \label{eq:ustatuni}
\end{equation}
where $h$ is a symmetric function of size $k$ referred to as the kernel.

The case where the $(Y_i)_{i \ge 1}$ are i.i.d. is well-studied. \cite{halmos1946theory} established the optimality of $U$-statistics as unbiased estimators and \cite{hoeffding1948class} derived a central limit theorem (CLT), which ensures their asymptotic normality provided $\mathbb{E}[h(Y_{i_1}, Y_{i_2}, ..., Y_{i_k})^2] < \infty$. For dependent cases, results exist for several dependency strucures, for example \cite{nandi1963properties, zhao1990normal} for finitely exchangeable variables, \cite{reitzner2013central} for Poisson point processes or \cite{duchemin2020concentration, duchemin2022three} for Markov chains.

In the infinitely exchangeable case, it is particularly convenient to view $h(Y_{i_1}, Y_{i_2}, ..., Y_{i_k})$ as an array of random variables $(X_{\boldsymbol i})_{\boldsymbol i}$ indexed by $k$-tuples ${\boldsymbol i} = (i_1, i_2, ..., i_k)$ where $X_{\boldsymbol i} = X_{(i_1, i_2, ..., i_k)}$. With this notation, it becomes clear that the $U$-statistic defined by~\eqref{eq:ustatuni} is the sum of the corresponding entries of the $k$-dimensional array $X$. But $(Y_1, Y_2, ...)$ being exchangeable implies that the array $X$ is jointly exchangeable, i.e. it is invariant by the action of joint permutations on each of its indices, for any sequence of $k$-tuples $(\boldsymbol i, \boldsymbol j, ...) \in \mathbb{N}^k$ and for any finite permutation $\sigma$ $\in \mathbb{S}_\infty$,
\begin{equation}
    (X_{\boldsymbol i}, X_{\boldsymbol j}, ...) \overset{\mathcal{D}}{=} (X_{(\sigma(i_1), \sigma(i_2), ..., \sigma(i_k))}, X_{(\sigma(j_1), \sigma(j_2), ..., \sigma(j_k))}, ...).
    \label{eq:joint_exch}
\end{equation}
\cite{eagleson1978limit} proved a CLT for sums jointly exchangeable arrays, which applies to $U$-statistics of exchangeable sequences. Many other asymptotic results for $U$-statistics of exchangeable sequences were derived afterwards, such as a Berry-Esseen bound \citep{van1984berry} and a law of iterated logarithm \citep{scott1985law}.

In relation with the existing literature, we add here the definition of separate exchangeability. $(X_{\boldsymbol i})_{\boldsymbol i}$ is said to be separately exchangeable if for any sequence of $k$-tuples $(\boldsymbol i, \boldsymbol j, ...) \in \mathbb{N}^k$ and for any permutations $\sigma_1, \sigma_2, ..., \sigma_k$ of $\mathbb{S}_\infty$,
\begin{equation}
    (X_{\boldsymbol i}, X_{\boldsymbol j}, ...) \overset{\mathcal{D}}{=} (X_{(\sigma_1(i_1), \sigma_2(i_2), ..., \sigma_k(i_k))}, X_{(\sigma_1(j_1), \sigma_2(j_2), ..., \sigma_k(j_k))}, ...).
    \label{eq:separate_exch}
\end{equation}

\paragraph{$U$-statistics for RCE matrices} Our contribution applies to $U$-statistics based on submatrices of size $2 \times 2$, that we call quadruplets, of an (infinite) RCE matrix $Y$
\begin{equation*}
    Y_{(i_1,i_2;j_1,j_2)} := \left(
    \begin{matrix}
    Y_{i_1j_1} & Y_{i_1j_2} \\
    Y_{i_2j_1} & Y_{i_2j_2}
    \end{matrix}
    \right).
\end{equation*}
Their kernels are real functions $h$ over quadruplets. To mimic the kernel symmetry in the unidimensional case, we assume that they present the following symmetry property: for any matrix $Y$, 
\begin{equation}
    h(Y_{(1,2;1,2)}) = h(Y_{(2,1;1,2)}) = h(Y_{(1,2;2,1)}) = h(Y_{(2,1;2,1)}).
    \label{eq:symmetry}
\end{equation}
This assumption can be made without loss of generality for $U$-statistics, since any quadruplet function $k$ can be made symmetric considering 
\begin{equation*}
h(Y_{(1,2;1,2)}) = \frac{1}{4} \left(k(Y_{(1,2;1,2)}) + k(Y_{(2,1;1,2)}) + k(Y_{(1,2;2,1)}) + k(Y_{(2,1;2,1)})\right)
\end{equation*}
and $\mathbb{E}[h(Y_{(1,2;1,2)})] = \mathbb{E}[k(Y_{(1,2;1,2)})]$.

Applied to an observed network represented by the first $m$ rows and $n$ columns of $Y$, a quadruplet $U$-statistic is then defined by 
\begin{equation}
    U^h_{m,n} = \binom{m}{2}^{-1} \binom{n}{2}^{-1} \sum_{\substack{1 \le i_1 < i_2 \le m\\1 \le j_1 < j_2 \le n}} h(Y_{(i_1,i_2;j_1,j_2)}),
    \label{eq:ustats}
\end{equation}
where $\binom{m}{2}$ is the number of $2$-combinations from $m$ elements. For clarity, we define the $4$-dimensional array $X$ using the following notation
\begin{equation*}
    X_{\{i_1, i_2; j_1, j_2\}} := h(Y_{(i_1, i_2; j_1, j_2)})
\end{equation*}
which means that a $U$-statistic is the mean of the first $\binom{m}{2} \times \binom{n}{2}$ entries of $X$. However, $U$-statistics of jointly exchangeable arrays deal with the mean of the first $\binom{m}{4}$ entries of the array $X$. Therefore, Theorem 4 of~\cite{eagleson1978limit} applies to $U$-statistics of square matrices, but not generally to the case of bipartite networks, where row and column nodes are distinct by nature. In particular, $U$-statistics for RCE matrices allow row and column indices to overlap and most importantly, $m$ to be different from $n$. Instead, the invariance structure of $X$ is a special case of $\pi$-exchangeability \citep{kallenberg1999multivariate}, where for any two permutations $\sigma_1$ and $\sigma_2$ of $\mathbb{S}_\infty$, we have 
\begin{equation}
    (X_{\{i_1,i_2;j_1,j_2\}})_{\substack{\{i_1 , i_2\} \subset \mathbb{N}\\ \{j_1 , j_2\} \subset \mathbb{N}}} \overset{\mathcal{D}}{=} (X_{\{\sigma_1(i_1),\sigma_1(i_2);\sigma_2(j_1),\sigma_2(j_2)\}})_{\substack{\{i_1 , i_2\} \subset \mathbb{N}\\ \{j_1 , j_2\} \subset \mathbb{N}}}.
    \label{eq:pi_exch}
\end{equation}
Therefore, $X$ is not separately exchangeable because, compared to~\eqref{eq:separate_exch}, the same permutation $\sigma_1$ has to be applied on both row indices $i_1$ and $i_2$ and the same permutation $\sigma_2$ on both column indices $j_1$ and $j_2$.

Lemma 12 of \cite{kallenberg1999multivariate} establishes a strong law of large numbers for $\pi$-exchangeable variables, which applies to our $U$-statistics. Our aim is to establish a weak convergence theorem similar to Theorem 4 of~\cite{eagleson1978limit}. In the recent literature, two related results were obtained by \cite{austern2022limit} and \cite{davezies2021empirical}. \cite{austern2022limit} explains how a result from~\cite{lindenstrauss1999pointwise} can be translated to a strong law of large numbers for sums of exchangeable arrays and their Theorem 17 is analogous to Theorem 4 of~\cite{eagleson1978limit} but it is obtained using Stein's method. \cite{davezies2021empirical} adopted a functional point of view. Their Theorem 2.1 is a Donsker-type version of the same result on jointly exchangeable arrays and Theorem 3.4 is an extension to separately exchangeable arrays. Because $U$-statistics of jointly exchangeable arrays are not suited to bipartite networks and because our arrays are not separately exchangeable, these results do not apply to our $U$-statistics of RCE matrices, as defined in~\eqref{eq:ustats} where~\eqref{eq:pi_exch} is satisfied. 

To generalize these results to our case, our proof relies on the convergence of sums of backward martingales (Theorem 1 of~\citealp{eagleson1978limit}). We derive a CLT result in the case where the RCE matrix $Y$ is dissociated, i.e. any of its submatrices with disjoint indexing sets are independent. This CLT excludes the degenerate case (i.e. when the convergence rate to the limiting distribution is greater than $\sqrt{N}$, see Section~\ref{sec:disc}) through a clear assumption on the asymptotic variance. In the degenerate case, the convergence result of \cite{austern2022limit} does not lead to a CLT neither, and \cite{davezies2021empirical} proved a different convergence theorem. We offer a discussion on the degenerate case and its implications in Section~\ref{sec:disc}. Finally, we recall that the backward martingale approach also yields Kallenberg's strong law of large numbers~\citep{kallenberg1999multivariate}.

In the last part of this work, we will put a special emphasis on the statistical analysis of bipartite networks. We introduce two versions of a RCE matrix model, the Bipartite Expected Degree Distribution (BEDD) model and we explain how our theorems apply to both of them. We suggest a method to perform statistical inference on these models using quadruplet $U$-statistics through several examples and we discuss how one can extend it.

\paragraph{Outline} The weak convergence theorem in the general case and the CLT in the dissociated case are presented and proven in Section~\ref{sec:main}. We shed further light on the difference between the dissociated and the non-dissociated cases using the Aldous-Hoover representation theorem. Section~\ref{sec:app} illustrates our results with applications to statistical network analysis using a RCE model and several examples of inference tasks. 

\section{Main result \label{sec:main}}

\subsection{Asymptotic framework}

Our results apply in an asymptotic framework where the numbers of rows and columns of the submatrix used in the calculation of the $U$-statistic grow at the same rate, i.e. ${m}/({m+n}) \rightarrow c \in ]0,1[$. To simplify the proofs, we allow only one row or one column to be added to the submatrix. Now, we build a sequence of dimensions $(m_N, n_N)_{N \ge 1}$ for the submatrix satisfying these conditions.

\begin{definition}[Sequence of dimensions] \label{def:mn}
    Let $c$ be an irrational number such that $0<c<1$. For all $N \in \mathbb{N}$, we define $m_N = 2 + \lfloor c(N+1) \rfloor$ and $n_N = 2 + \lfloor (1-c)(N+1) \rfloor$, where $\lfloor \cdot \rfloor$ is the floor function.
    \label{def:sequences}
\end{definition}

\begin{proposition}
    $m_N$ and $n_N$ satisfy: 
    \begin{enumerate}
        \item $\frac{m_N}{m_N + n_N} \xrightarrow[N \rightarrow \infty]{} c$,
        \item $m_N + n_N = 4 + N$, for all $N \in \mathbb{N}$.
    \end{enumerate}
    \label{prop:sequences}
\end{proposition}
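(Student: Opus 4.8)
The plan is to prove the exact identity (2) first, since it feeds directly into the asymptotic claim (1). The heart of the matter is the elementary fact that for two reals $x$ and $y$ whose sum $x+y$ is an integer, one has $\lfloor x \rfloor + \lfloor y \rfloor = x+y$ when both are integers and $\lfloor x \rfloor + \lfloor y \rfloor = x+y-1$ otherwise (split $x$ and $y$ into integer and fractional parts: the two fractional parts sum to an integer in $[0,2)$, hence to $0$ or $1$). Here I would set $x = c(N+1)$ and $y = (1-c)(N+1)$, so that $x + y = N+1 \in \mathbb{N}$.

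The point where the irrationality of $c$ enters — and the one subtlety worth flagging — is in ruling out the integer case. Since $c$ is irrational and $N+1 \ge 1$ is a nonzero integer, the product $c(N+1)$ is irrational, hence not an integer; the same holds for $(1-c)(N+1) = (N+1) - c(N+1)$. I would therefore be in the strict case, giving $\lfloor c(N+1) \rfloor + \lfloor (1-c)(N+1) \rfloor = (N+1) - 1 = N$. Adding the two constants $2$ from the definitions of $m_N$ and $n_N$ then yields $m_N + n_N = 4 + N$ for every $N$, which is claim (2). Note that if $c$ were rational this identity could fail for those $N$ making $c(N+1)$ an integer, which is precisely why Definition~\ref{def:mn} requires $c$ irrational.

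For claim (1), I would substitute the exact denominator from claim (2) and sandwich the numerator. Writing $m_N = 2 + \lfloor c(N+1) \rfloor$ and using $c(N+1) - 1 < \lfloor c(N+1) \rfloor \le c(N+1)$, I obtain
\begin{equation*}
    \frac{c(N+1) + 1}{4+N} < \frac{m_N}{m_N + n_N} = \frac{2 + \lfloor c(N+1) \rfloor}{4 + N} \le \frac{c(N+1) + 2}{4+N}.
\end{equation*}
Both the lower and the upper bound are ratios of degree-one polynomials in $N$ with leading coefficient $c$ in the numerator and $1$ in the denominator, so each converges to $c$ as $N \to \infty$. By the squeeze theorem, $m_N/(m_N+n_N) \to c$, establishing claim (1).

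The only step requiring genuine care is the floor identity together with the verification that neither argument is an integer; everything else is routine arithmetic and a squeeze argument, so I do not anticipate a real obstacle.
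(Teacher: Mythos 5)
Your proof is correct and takes essentially the same approach as the paper: claim (2) rests on the same floor-sum identity (the paper writes it as $\lfloor x \rfloor + \lfloor -x \rfloor = -1$ for non-integer $x$, after pulling the integer $N+1$ out of the second floor), with the irrationality of $c$ invoked exactly as you do to exclude the integer case. For claim (1) the paper argues via asymptotic equivalence $\frac{\lfloor c(N+1) \rfloor + 2}{N+4} \underset{N}{\sim} \frac{c(N+1)+2}{N+4} \underset{N}{\sim} c$ where you use a squeeze argument, which is only a cosmetic difference in how the routine limit is presented.
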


\begin{corollary}
    At each iteration $N \in \mathbb{N}^*$, one and only one of these two propositions is true:
    \begin{enumerate}
        \item $m_N = m_{N-1} + 1$ and $n_N = n_{N-1}$,
        \item $n_N = n_{N-1} + 1$ and $m_N = m_{N-1}$.
    \end{enumerate}
    \label{cor:partition}
\end{corollary}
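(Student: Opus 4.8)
The plan is to reduce everything to two observations: the total $m_N + n_N$ advances by exactly one unit at each step, and neither $m_N$ nor $n_N$ can ever decrease. Granting these, the two increments $m_N - m_{N-1}$ and $n_N - n_{N-1}$ are non-negative integers whose sum equals $1$, so exactly one of them is $1$ and the other is $0$ — which is precisely the claimed dichotomy, together with its mutual exclusivity.

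First I would invoke Proposition~\ref{prop:sequences}(2) at the indices $N$ and $N-1$ (legitimate since $N \in \mathbb{N}^*$ means $N-1 \in \mathbb{N}$), obtaining $m_N + n_N = 4 + N$ and $m_{N-1} + n_{N-1} = 3 + N$. Subtracting gives
\begin{equation*}
    (m_N - m_{N-1}) + (n_N - n_{N-1}) = 1.
\end{equation*}
Next I would establish monotonicity. Since $c > 0$ we have $c(N+1) \ge cN$, and the floor function is non-decreasing, so $m_N - m_{N-1} = \lfloor c(N+1) \rfloor - \lfloor cN \rfloor \ge 0$; the same argument with $1-c > 0$ yields $n_N - n_{N-1} \ge 0$. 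Two non-negative integers summing to $1$ must be $1$ and $0$ in some order, which is exactly the two alternatives of the statement, and they cannot hold simultaneously.

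There is no genuine obstacle here: the one point deserving a moment's care is that the two cases are both exhaustive and mutually exclusive, and this is guaranteed by the \emph{exact} identity $m_N + n_N = 4 + N$ of Proposition~\ref{prop:sequences}, not by a merely asymptotic statement. It is worth noting that this exactness — and hence the corollary — ultimately rests on the irrationality of $c$ imposed in Definition~\ref{def:mn}, which prevents $c(N+1)$ from ever being an integer and is what pins the total increment at exactly one per step; were $c$ rational, the total could occasionally jump by two and the dichotomy would fail at such indices. An alternative route bypassing the proposition would bound each increment directly, using $\lfloor c(N+1)\rfloor - \lfloor cN \rfloor \in \{0,1\}$ for $0 < c < 1$ and likewise for $n_N$, and then rule out that both increments vanish or both equal one; but the argument through the sum is the shortest.
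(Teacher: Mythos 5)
Your proof is correct and follows essentially the same route as the paper's: combine the exact identity $m_N + n_N = 4 + N$ from Proposition~\ref{prop:sequences}(2) with the monotonicity of $m_N$ and $n_N$ to force the two increments to be $0$ and $1$ in some order. The only difference is that you spell out the monotonicity (via the floor function) and the role of irrationality explicitly, whereas the paper treats these as immediate.
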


Throughout the paper, only sequences satisfying Definition~\ref{def:sequences} are considered. Such sequences $m_N$ and $n_N$ satisfy the desired growth conditions (proof given in Appendix~\ref{app:mn}). We investigate the asymptotic behaviour of $U^h_{m,n}$ through $U^h_N$ defined as follows.
\begin{definition}
    With $(m_N, n_N)_{N \ge 1}$ introduced by Definition~\ref{def:mn} and $U^h_{m,n}$ specified by equation~\eqref{eq:ustats}, we set the sequence of $U$-statistics $(U^h_{N})_{N \ge 1}$ such that for all $n \in \mathbb{N}$, $U^h_{N} = U^h_{m_N, n_N}$.
    \label{def:uhn}
\end{definition}

\subsection{Theorems}

We establish the following results on the asymptotic behaviour of $U$-statistics over RCE matrices.

\begin{theorem}[Main theorem]
    Let $Y$ be a RCE matrix. Let $h$ be a quadruplet kernel such that $\mathbb{E}[h(Y_{(1,2;1,2)})^2] < \infty$. Let $(U^h_{N})_{N \ge 1}$ be the sequence of $U$-statistics associated with $h$ defined in Definition~\ref{def:uhn}. Let $\mathcal{F}_N = \sigma\big((U^h_{k,l}, k \ge m_N, l \ge n_N)\big)$ and $\mathcal{F}_\infty := \bigcap_{N = 1}^{\infty} \mathcal{F}_N$. Set $U^h_\infty = \mathbb{E}[h(Y_{(1,2;1,2)})|\mathcal{F}_\infty]$ and 
    \begin{equation*}
    \begin{split}
        V =&~ \frac{4}{c}\Cov\big(h(Y_{(1,2;1,2)}),h(Y_{(1,3;3,4)}) \big|\mathcal{F}_\infty \big) + \frac{4}{1-c}\Cov\big(h(Y_{(1,2;1,2)}),h(Y_{(3,4;1,3)})\big|\mathcal{F}_\infty \big).
    \end{split}
    \end{equation*}
    If $\mathbb{P}(V > 0) > 0$, then
    \begin{equation*}
        \sqrt{N}(U^h_{N}-U^h_\infty) \xrightarrow[N \rightarrow \infty]{\mathcal{D}} W,
    \end{equation*}
    where $W$ is a random variable with characteristic function $\phi(t) = \mathbb{E}[\exp(-\frac{1}{2} t^2 V)]$.
    \label{th:mixture_theorem}
\end{theorem}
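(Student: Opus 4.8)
The plan is to build the argument on the \emph{reverse martingale} structure of the family $(U^h_N)$ with respect to the decreasing filtration $(\mathcal{F}_N)$, and then to run a conditional (stable) central limit theorem on the associated martingale differences. First I would verify the reverse martingale identity $\mathbb{E}[U^h_{N-1}\mid\mathcal{F}_N]=U^h_N$. This is the bipartite analogue of the classical averaging property of $U$-statistics: conditioning on $\mathcal{F}_N$ amounts to averaging the smaller statistic over all ways of relabelling the extra row or column, and the separate row/column exchangeability of $Y$ makes this average coincide with $U^h_N$. Reverse martingale convergence (using $\mathbb{E}[h^2]<\infty$) then gives $U^h_N\to U^h_\infty$ almost surely and in $L^2$, which simultaneously identifies the centering $U^h_\infty=\mathbb{E}[h(Y_{\{1,2;1,2\}})\mid\mathcal{F}_\infty]$ and pins down the tail $\sigma$-algebra $\mathcal{F}_\infty$ carrying the random limiting variance.

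Next I would telescope
\begin{equation*}
\sqrt{N}\,(U^h_N-U^h_\infty)=\sqrt{N}\sum_{k>N} D_k,\qquad D_k:=U^h_{k-1}-U^h_k,
\end{equation*}
where $\mathbb{E}[D_k\mid\mathcal{F}_k]=0$, so that the left-hand side is a normalized tail sum of reverse martingale differences. To such a sum I would apply a reverse martingale CLT producing a Gaussian mixture: the limit $W$ has characteristic function $\mathbb{E}[\exp(-\tfrac12 t^2 V)]$ precisely when $N\sum_{k>N}\mathbb{E}[D_k^2\mid\mathcal{F}_\infty]\to V$ and a conditional Lindeberg (uniform negligibility) condition holds. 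Since $\mathbb{E}[U^h_N-U^h_\infty\mid\mathcal{F}_\infty]=0$ and the differences are conditionally orthogonal, the conditional variance telescopes, so it suffices to show $N\,\mathrm{Var}(U^h_N-U^h_\infty\mid\mathcal{F}_\infty)\to V$.

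The identification of $V$ then rests on a Hoeffding/Hájek decomposition carried out conditionally on $\mathcal{F}_\infty$. I would project each centered kernel onto functions of a single row and of a single column, writing $U^h_N-U^h_\infty$ as a linear part $\widehat U_N$ (a sum of row-terms with weight $2/m_N$ plus column-terms with weight $2/n_N$, the weight $2$ reflecting the two row- and two column-slots of each quadruplet) plus a remainder $R_N$ collecting the degenerate higher-order components. A conditional Hájek identity shows that the conditional variance of a single row-term equals $\Cov(h(Y_{\{1,2;1,2\}}),h(Y_{\{1,3;3,4\}})\mid\mathcal{F}_\infty)$ --- the covariance of two quadruplets sharing exactly one row --- and likewise for columns with the pattern $Y_{\{3,4;1,3\}}$. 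Summing, $N\,\mathrm{Var}(\widehat U_N\mid\mathcal{F}_\infty)\to\frac{4}{c}\Cov(\cdot,\cdot)+\frac{4}{1-c}\Cov(\cdot,\cdot)=V$, using $m_N/N\to c$ and $n_N/N\to 1-c$; Corollary~\ref{cor:partition} guarantees that single row/column increments drive the differences $D_k$.

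The main obstacle will be the conditional CLT itself. Because $X$ is only separately row/column exchangeable and \emph{not} jointly exchangeable, the theorem of \cite{eagleson1978limit} does not apply, so I must establish stable convergence of $\sqrt{N}\sum_{k>N}D_k$ to the Gaussian mixture directly: this requires verifying the conditional Lindeberg condition for the triangular array of differences and controlling all cross-covariances. The accompanying delicate estimate is the negligibility $\sqrt{N}\,R_N\to 0$ in $L^2$, i.e. showing that quadruplet pairs sharing two indices, or none, contribute only lower-order variance; both estimates are performed conditionally on $\mathcal{F}_\infty$ and rely on the RCE orthogonality relations among the projection components. The Aldous--Hoover representation $Y_{ij}=F(\alpha,\xi_i,\eta_j,\zeta_{ij})$ is the natural tool here, since under it $\mathcal{F}_\infty$ reduces to $\sigma(\alpha)$ and the matrix becomes dissociated once $\alpha$ is fixed, which is exactly what makes the limiting variance $\mathcal{F}_\infty$-measurable and the limit a genuine mixture.
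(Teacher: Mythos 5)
Your skeleton coincides with the paper's: show that $(U^h_N,\mathcal{F}_N)_{N\ge 1}$ is a square-integrable reverse martingale (Proposition~\ref{prop:martingale}), then apply the reverse-martingale CLT of Eagleson (Theorem~\ref{th:eagleson}) to the tail sum of differences, checking an asymptotic-variance condition and a conditional Lindeberg condition; your identification of $V$ (the weight $2$ from the two row/column slots, the overlap patterns $Y_{\{1,3;3,4\}}$ and $Y_{\{3,4;1,3\}}$, the factors $4/c$ and $4/(1-c)$) agrees with Proposition~\ref{prop:asymptotic_variance}. However, there is a genuine gap at the variance step. Theorem~\ref{th:eagleson} requires convergence in probability of the \emph{filtration}-conditioned sum $N\sum_{k>N}\mathbb{E}[D_k^2\mid\mathcal{F}_k]$ (with $D_k=U^h_{k-1}-U^h_k$), whereas your telescoping identity only computes $N\sum_{k>N}\mathbb{E}[D_k^2\mid\mathcal{F}_\infty]=N\,\mathrm{Var}(U^h_N-U^h_\infty\mid\mathcal{F}_\infty)$. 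Since $\mathcal{F}_\infty\subset\mathcal{F}_k$, the quantity you control is merely the $\mathcal{F}_\infty$-conditional expectation of the quantity the CLT needs, and convergence of conditional expectations does not give convergence in probability: nothing in your argument rules out fluctuations of the predictable variation around its $\mathcal{F}_\infty$-projection (a nonnegative sequence $X_N$ can satisfy $\mathbb{E}[X_N\mid\mathcal{G}]\to V$ while $X_N$ itself converges to nothing). Closing exactly this gap is the content of the paper's Appendix~\ref{app:variance}: when step $K$ adds a row, $U^h_{K-1}-U^h_K=\frac{2}{m_K-2}(U^h_K-\delta_K)$ with $\delta_K$ the average of kernels using the new row (Lemma~\ref{lem:form_z}), and row-column exchangeability makes $\mathbb{E}[D_{K-1}^2\mid\mathcal{F}_K]$ an \emph{explicit} combination of $U_K^2$ and of the overlap moments $c^{(p,q)}_K$ (Lemma~\ref{lem:form_s}); each $c^{(p,q)}_K$ is itself a reverse martingale, hence converges a.s.\ to an $\mathcal{F}_\infty$-measurable limit (Lemma~\ref{lem:conv_cov}), and a weighted Ces\`aro argument (Lemma~\ref{lem:cesaro_weighted}) then yields a.s.\ convergence of the predictable variation to $V$. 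In short, the concentration of $\sum_k\mathbb{E}[D_k^2\mid\mathcal{F}_k]$ around its $\mathcal{F}_\infty$-mean is a theorem that must be proved, not a consequence of conditional orthogonality.

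A secondary weak point: your conditional H\'ajek identity and the bound $\sqrt{N}R_N\to 0$ are performed ``conditionally on $\mathcal{F}_\infty$'', invoking the claim that given $\mathcal{F}_\infty$ the matrix is dissociated with conditionally i.i.d.\ rows and columns, i.e.\ that $\mathcal{F}_\infty$ reduces to $\sigma(\alpha)$ in the Aldous--Hoover representation. That is a conditional Hewitt--Savage-type statement which is not free: the paper proves a zero-one law of this kind only in the dissociated case (Theorem~\ref{th:hewitt_savage}, used for Theorem~\ref{th:gaussian_theorem}), and its proof of Theorem~\ref{th:mixture_theorem} deliberately avoids any such structural claim --- the conditional covariances in $V$ arise directly as a.s.\ limits of the reverse martingales $c^{(p,q)}_K$. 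If you wish to keep the H\'ajek projection route, you must either prove the conditional dissociation statement or redo the projection with conditioning on the $\mathcal{F}_k$'s, at which point you are back to the paper's explicit computation.
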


Theorem~\ref{th:mixture_theorem} states that the limiting distribution of $\sqrt{N}(U^h_N - U^h_\infty)$ is a mixture of Gaussians. $V$ consists of two terms corresponding to the covariance of the kernel taken on two quadruplets sharing one row or one column, conditional on $\mathcal{F}_\infty$. The condition $\mathbb{P}(V > 0) > 0$ is used to avoid the case $V = 0$ almost surely, which is a degenerate case discussed in Section~\ref{sec:disc}. Since $V$ is not constant in general, the limit distribution is an infinite mixture of Gaussians. This expression is analogous to $\eta^2$ in Theorem 4 of~\citet{eagleson1978limit} for jointly exchangeable arrays and the covariance kernel in Theorems 2.1 and 3.4 of~\citet{davezies2021empirical} for jointly and separately exchangeable arrays. We see that if $V$ is constant, then the asymptotic distribution is a simple Gaussian. One may observe that $U^h_N$ and $\sum_{\phi \in \mathbb{S}^2_N} f(\phi Y)$ studied in Corollary 19 of \citet{austern2022limit} are related as $U^h_N = (m_N!)^{-2} \sum_{\phi \in \mathbb{S}^2_{m_N}} f(\phi Y)$ when $m_N = n_N$ and $f(Y) = h(Y_{(1,2;1,2)})$. Still the convergence rates given in Theorem~\ref{th:mixture_theorem} and Corollary 19 of \citet{austern2022limit} (the proof of which is not given in the paper) are inconsistent. From what we understand, Corollary 19 actually corresponds to a degenerate case ($V=0$) in Theorem~\ref{th:mixture_theorem}. Next we identify a class of models where the limiting distribution of $\sqrt{N}(U^h_N - U^h_\infty)$ is a simple Gaussian. 

\begin{definition}
  $Y$ is a dissociated matrix if and only if $(Y_{ij})_{1 \le i \le m, 1 \le j \le n}$ is independent of $(Y_{ij})_{i > m, j > n}$, for all $m$ and $n$.
\end{definition}

In other words, $Y$ is dissociated if submatrices that are not sharing any row or column are independent. Now we claim the following extension to Theorem~\ref{th:mixture_theorem} for dissociated RCE matrices.

\begin{theorem}
    In addition to the hypotheses of Theorem~\ref{th:mixture_theorem}, if $Y$ is dissociated, then $U^h_\infty$ and $V$ are constant and \begin{equation*}
        \sqrt{N}(U^h_{N}-U^h_\infty) \xrightarrow[N \rightarrow \infty]{\mathcal{D}} \mathcal{N}(0, V),
    \end{equation*}
    More precisely,
    \begin{enumerate}
        \item $U^h_\infty = \mathbb{E}[h(Y_{(1,2;1,2)})]$,
        \item $V = \frac{4}{c}\Cov\big(h(Y_{(1,2;1,2)}),h(Y_{(1,3;3,4)})\big) + \frac{4}{1-c}\Cov\big(h(Y_{(1,2;1,2)}),h(Y_{(3,4;1,3)})\big)$.
    \end{enumerate}
    \label{th:gaussian_theorem}
\end{theorem}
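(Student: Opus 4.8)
The plan is to derive Theorem~\ref{th:gaussian_theorem} directly from Theorem~\ref{th:mixture_theorem} by showing that dissociation forces the $\sigma$-field $\mathcal{F}_\infty$ to be $\mathbb{P}$-trivial. Once this is established, every conditioning on $\mathcal{F}_\infty$ collapses to an ordinary expectation: $U^h_\infty = \mathbb{E}[h(Y_{\{1,2;1,2\}})|\mathcal{F}_\infty]$ becomes $\mathbb{E}[h(Y_{\{1,2;1,2\}})]$, which is the first point, and each conditional covariance entering $V$ reduces to the corresponding unconditional covariance, which is the second point. With $V$ now deterministic, the limiting characteristic function supplied by Theorem~\ref{th:mixture_theorem} becomes $\phi(t) = \mathbb{E}[\exp(-\tfrac{1}{2}t^2 V)] = \exp(-\tfrac{1}{2}t^2 V)$, exactly the characteristic function of $\mathcal{N}(0,V)$, and the convergence follows.

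The heart of the argument is thus the triviality of $\mathcal{F}_\infty$, which I would obtain from the Aldous--Hoover representation $Y_{ij} = f(\alpha, \xi_i, \eta_j, \zeta_{ij})$ of the RCE matrix, with $\alpha, (\xi_i), (\eta_j), (\zeta_{ij})$ i.i.d. uniform and $\alpha$ the single global variable carrying the directing randomness. For a separately exchangeable array, the invariant $\sigma$-field generated by the events left unchanged under all finite permutations of rows and columns coincides with $\sigma(\alpha)$. Each generator $U^h_{kl}$ of $\mathcal{F}_N$ is symmetric under permutations of its first $k$ rows and first $l$ columns, so any event of $\mathcal{F}_\infty = \bigcap_N \mathcal{F}_N$ is invariant under every finite permutation; hence $\mathcal{F}_\infty \subseteq \sigma(\alpha)$. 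At the same time $\mathcal{F}_\infty \subseteq \sigma(Y)$, since each $U^h_{kl}$ is a function of $Y$. The Aldous--Hoover characterisation of dissociation states that a separately exchangeable array is dissociated if and only if it admits a representation in which $f$ does not depend on $\alpha$, equivalently $\sigma(\alpha)$ and $\sigma(Y)$ are independent. Therefore $\mathcal{F}_\infty \subseteq \sigma(\alpha) \cap \sigma(Y)$ is $\mathbb{P}$-trivial, which closes the argument.

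The main obstacle is this identification of the invariant $\sigma$-field with $\sigma(\alpha)$, together with the characterisation of dissociation as the absence of the global term: both belong to Aldous--Hoover theory and must be invoked (or proved) with some care, in particular to justify that an $\mathcal{F}_\infty$-measurable functional is genuinely invariant under all finite permutations and hence $\sigma(\alpha)$-measurable. Once $\mathcal{F}_\infty$ is known to be trivial, the remaining steps---substituting the unconditional expectation and covariances into the formulas for $U^h_\infty$ and $V$, and recognising the Gaussian characteristic function---are routine.
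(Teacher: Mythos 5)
Your proposal is correct in outline and shares the paper's overall strategy---reduce Theorem~\ref{th:gaussian_theorem} to the $\mathbb{P}$-triviality of $\mathcal{F}_\infty$ under dissociation, then collapse the conditional expectation and conditional covariances of Theorem~\ref{th:mixture_theorem} into constants and read off the Gaussian characteristic function---but it reaches triviality by a genuinely different route. The paper never identifies the invariant $\sigma$-field with $\sigma(\alpha)$: it passes to the dissociated representation $Y^*_{ij} = f(\xi_i,\eta_j,\zeta_{ij})$ (Proposition 3.3 of \cite{aldous1981representations}), observes that $\mathcal{F}_\infty(Y^*)$ consists of events, on the product space of the i.i.d.\ variables $(\xi_i,\eta_j,\zeta_{ij})$, that are invariant under all finite row-column permutations, and then proves from scratch a row-column Hewitt--Savage zero-one law (Theorem~\ref{th:hewitt_savage}) by adapting the classical approximation argument. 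You instead invoke the identification of the invariant $\sigma$-field of a separately exchangeable array with $\sigma(\alpha)$ and combine it with the independence $\sigma(Y)\perp\sigma(\alpha)$ available in the dissociated representation; the resulting independence trick ($A$ independent of an a.s.\ version of itself forces $\mathbb{P}(A)=\mathbb{P}(A)^2$) is clean and does close the argument. Two caveats, though. First, ``coincides with $\sigma(\alpha)$'' is too strong: $\alpha$ is in general not recoverable from $Y$ (consider $Y$ deterministic, with $\alpha$ still uniform), so what holds---and what your argument actually needs---is only the containment of the invariant $\sigma$-field in $\sigma(\alpha)$ up to null sets. Second, that containment is itself established in the literature by precisely the kind of (conditional) Hewitt--Savage argument the paper writes out, so your proof is shorter only because it black-boxes the hard step; it is legitimate provided you cite a precise statement (e.g., the equivalence of dissociation and ergodicity for exchangeable arrays in Kallenberg's \emph{Probabilistic Symmetries and Invariance Principles}), whereas the paper's choice buys a self-contained proof at the cost of establishing Theorem~\ref{th:hewitt_savage} directly. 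A final minor point: like the paper, you should transfer the conclusion from the representing array $Y^*$ back to $Y$ via equality in distribution of the whole family $(U^h_{kl})$, since $\mathcal{F}_\infty(Y)$ is defined through $Y$ and not through the representation.
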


This result can be more directly exploited for statistical applications, as the limiting distribution is much more simple. Another important result is the joint asymptotic normality of $U$-statistics, which holds as long as Theorem~\ref{th:gaussian_theorem} applies to each kernel separately and they are linearly independent. 

\begin{theorem}
\label{th:multivariate}
  Let $Y$ be a RCE dissociated matrix. Let $(h_1, h_2, ..., h_n)$ be a vector of quadruplet kernels such that 
  \begin{enumerate}
      \item Theorem~\ref{th:gaussian_theorem} applies for each kernel, i.e. $\mathbb{E}[h_k(Y_{(1,2;1,2)})^2] < \infty$ and $U^{h_k}_\infty$ and $V^{h_k}$ are as defined in Theorem~\ref{th:gaussian_theorem} for each kernel $h_k$, $1 \le k \le n$,
      \item for $t \in \mathbb{R}^n$, $\sum_{k=1}^n t_k h_k \equiv 0$ if and only if $t = (0, ..., 0)$.
  \end{enumerate} 
  Then
  \begin{equation*}
      \sqrt{N}\left(\begin{pmatrix} U^{h_1}_N  \\ U^{h_2}_N \\ ... \\ U^{h_n}_N 
    \end{pmatrix}-\begin{pmatrix} U^{h_1}_\infty  \\ U^{h_2}_\infty \\ ... \\ U^{h_n}_\infty 
    \end{pmatrix}\right) \xrightarrow[N \rightarrow \infty]{\mathcal{D}} \mathcal{N}(0, \Sigma),
  \end{equation*}
  with
  \begin{equation*}
      \Sigma = \left(C^{h_i, h_j} \right)_{1 \le i,j \le n},
  \end{equation*}
  where $C^{h_k,h_\ell} = \lim_{N \rightarrow +\infty} N \Cov(U^{h_k}_N, U^{h_\ell}_N)$ for all $1 \le k,\ell \le n$ (and $C^{h_k,h_k} = V^{h_k}$).
\end{theorem}

This theorem allows us to obtain the asymptotic normality of linear combinations of $U$-statistics and more interestingly, the asymptotic normality of differentiable functions of $U$-statistics (see Section~\ref{subsec:est}).

\begin{remark*}
  Lemma 12 of \cite{kallenberg1999multivariate} already provided a strong law of large numbers for $\pi$-exchangeable arrays, of which quadruplet kernels are a subcase. The following theorem rephrases Kallenberg's law of large numbers for quadruplet $U$-statistics and gives an additional precision in the dissociated case for which we provide an alternative proof, as it is a natural consequence of our proof of Theorems~\ref{th:mixture_theorem} and~\ref{th:gaussian_theorem}.
\end{remark*}

\begin{theorem}
    Let $Y$ be a RCE matrix. Let $h$ be a quadruplet kernel. Let $(U^h_{N})_{N \ge 1}$ the sequence of $U$-statistics associated with $h$ defined in Definition~\ref{def:uhn}. Let $\mathcal{F}_N = \sigma\big((U^h_{k,l}, k \ge m_N, l \ge n_N)\big)$ and $\mathcal{F}_\infty := \bigcap_{N = 1}^{\infty} \mathcal{F}_N$. We have
    \begin{equation*}
        U^h_N \xrightarrow[N \rightarrow \infty]{a.s.} \mathbb{E}[h(Y_{(1,2;1,2)})|\mathcal{F}_\infty].
    \end{equation*}
    Furthermore, if $Y$ is dissociated, then $\mathbb{E}[h(Y_{(1,2;1,2)})|\mathcal{F}_\infty] = \mathbb{E}[h(Y_{(1,2;1,2)})]$.
    \label{th:kallenberg_slln}
\end{theorem}

\subsection{The Aldous-Hoover theorem} 
\label{subsec:aldous_hoover}

We shall explain Theorems~\ref{th:mixture_theorem} and~\ref{th:gaussian_theorem} in the light of the Aldous-Hoover representation theorem. Theorem 1.4 of~\cite{aldous1981representations} states that for any RCE matrix $Y$, there exists a real function $f$ such that if we denote $Y^*_{ij} = f(\alpha, \xi_i, \eta_j, \zeta_{ij})$, for $1 \le i,j < \infty$, where the $\alpha$, $\xi_i$, $\eta_j$ and $\zeta_{ij}$ are i.i.d. random variables with uniform distribution over $[0,1]$, then
\begin{equation}
    Y \overset{\mathcal{D}}{=} Y^*.
    \label{eq:aldous}
\end{equation}

It is possible to identify the role of each of the random variables involved in the representation theorem. We notice that each $Y_{ij}$ is determined by $\alpha, \xi_i, \eta_j$ and $\zeta_{ij}$. $\zeta_{ij}$ is entry-specific while $\xi_i$ is shared by all the entries involving the row $i$ and $\eta_j$ by the ones involving the column $j$. Therefore, the $\xi_i$ and $\eta_j$ represent the contribution of each individual of type 1 and type 2 of the network, i.e. each row and column of the matrix. These contributions are i.i.d., which makes the network exchangeable. Finally, $\alpha$ is global to the whole network and shared by all entries.

Proposition 3.3 of~\cite{aldous1981representations} states that if $Y$ is dissociated, then $Y^*$ can be written without $\alpha$, i.e. it is of the form $Y^*_{ij} = f(\xi_i, \eta_j, \zeta_{ij})$, for $1 \le i,j < \infty$. In this case, because the $\xi_i$, $\eta_j$ and $\zeta_{ij}$ are i.i.d., averaging with the $U$-statistic over an increasing number of nodes nullifies the contribution of each individual interaction ($\zeta_{ij}$) and node ($\xi_i$ and $\eta_j$). In the general case, i.e. when $Y$ is not dissociated, then conditionally on $\alpha$, $Y$ is dissociated. It is easy to see that the mixture of Gaussians from Theorem~\ref{th:mixture_theorem} results from this conditioning.

In practice, dissociated exchangeable random graph models are widely spread. Notably, a RCE model is dissociated if and only if it can be written as a $W$-graph (or graphon), i.e. it is defined by a distribution $\mathcal{W}$ depending on two parameters in $[0,1]$ such that for $1 \le i,j < \infty$:
\begin{equation*}
  \begin{split}
    \xi_i, \eta_j &\overset{i.i.d.}{\sim} \mathcal{U}[0,1] \\
    Y_{ij}~|~\xi_i, \eta_j &\sim \mathcal{W}(\xi_i, \eta_j),
  \end{split}
  \label{eq:graphon}
\end{equation*}
see \cite{diaconis2008graph} for binary bipartite graphs, \cite{lovasz2010limits} for an extension to weighted graphs but in a unipartite setup. In this definition, it is easy to recognize the variables from the representation theorem of Aldous-Hoover. We simply identify the $\xi_i$ and $\eta_j$, then it suffices to take $\phi^{-1}_{\xi_i, \eta_j}$ the inverse distribution function of $\mathcal{W}(\xi_i, \eta_j)$ to see that defining the dissociated RCE matrix $Y^*$ such that $Y^*_{ij} = f(\xi_i, \eta_j, \zeta_{ij}) := \phi^{-1}_{\xi_i, \eta_j}(\zeta_{ij})$ fulfills $Y^* \overset{\mathcal{D}}{=} Y$.

\subsection{Proof of Theorem~\ref{th:mixture_theorem}}

To prove Theorem~\ref{th:mixture_theorem}, we adapt the proof of \cite{eagleson1978limit} establishing the asymptotic normality of sums of backward martingale differences. The definition of a backward martingale is reminded in Appendix~\ref{app:backward}.

\begin{theorem}[\citealp{eagleson1978limit}]
    Let $(M_n, \mathcal{F}_n)_{n \ge 1}$ be a square-integrable reverse martingale, $V$ a $\mathcal{F}$-measurable, a.s. finite, positive random variable. Denote $M_\infty := \mathbb{E}[M_1 | \mathcal{F}_\infty]$ where $\mathcal{F}_\infty := \bigcap_{n = 1}^{\infty} \mathcal{F}_n$. Set $Z_{nk} := \sqrt{n}(M_k - M_{k+1})$. If:
    \begin{enumerate}
        \item $\sum_{k=n}^\infty \mathbb{E}[Z_{nk}^2 | \mathcal{F}_{k+1}] \xrightarrow[n \rightarrow \infty]{\mathbb{P}} V$ \text{(asymptotic variance)},
        \item for all $\epsilon > 0$, $\sum_{k=n}^\infty \mathbb{E}[Z_{nk}^2 \mathds{1}_{\{ |Z_{nk}| > \epsilon\}} | \mathcal{F}_{k+1}] \xrightarrow[n \rightarrow \infty]{\mathbb{P}} 0$ \text{(conditional Lindeberg condition)},
    \end{enumerate}
    then $\sum_{k=n}^\infty Z_{nk} = \sqrt{n}(M_n - M_\infty) \xrightarrow[n \rightarrow \infty]{\mathcal{D}} W$, where $W$ is a random variable with characteristic function $\phi(t) = \mathbb{E}[\exp(-\frac{1}{2} t^2 V)]$.
    \label{th:eagleson}
\end{theorem}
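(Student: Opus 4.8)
The plan is to prove this as the reverse-martingale analogue of the Lindeberg--Feller central limit theorem, reducing it to a triangular-array martingale CLT. First I would record the consequence of the reverse-martingale structure: since the $\sigma$-fields decrease, $\mathcal{F}_{k+1}\subseteq\mathcal{F}_k$, the increments $D_k:=M_k-M_{k+1}$ satisfy $\mathbb{E}[D_k\mid\mathcal{F}_{k+1}]=0$, so reading the index $k$ in decreasing order turns $(D_k)$ into an ordinary forward martingale-difference sequence for the increasing filtration $(\mathcal{F}_{k+1})_k$. The reverse-martingale convergence theorem then gives $M_k\to M_\infty$ almost surely and in $L^2$, which both justifies $M_\infty=\mathbb{E}[M_1\mid\mathcal{F}_\infty]$ and shows that the telescoping series $\sum_{k=n}^\infty Z_{nk}=\sqrt{n}(M_n-M_\infty)$ genuinely converges. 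After this reduction, hypotheses (1) and (2) are precisely the sum-of-conditional-variances condition and the conditional Lindeberg condition for the array $(Z_{nk})_{k\ge n}$.

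The core of the argument would be McLeish's characteristic-function method. I would introduce the complex product $U_n(t)=\prod_{k\ge n}(1+\mathrm{i}t Z_{nk})$, which is a mean-one martingale in $n$, and write $e^{\mathrm{i}t T_n}=U_n(t)\,\exp\!\big(\mathrm{i}t T_n-\log U_n(t)\big)$ with $T_n=\sum_{k\ge n}Z_{nk}$. Expanding $\log(1+\mathrm{i}t Z_{nk})=\mathrm{i}t Z_{nk}-\tfrac12 t^2 Z_{nk}^2+r_{nk}$ and using hypothesis (2) to get $\max_k|Z_{nk}|\xrightarrow{\mathbb{P}}0$ and $\sum_k r_{nk}\xrightarrow{\mathbb{P}}0$, together with hypothesis (1) to replace $\sum_k Z_{nk}^2$ by $V$, the exponential factor collapses to $\exp(\tfrac12 t^2 V)$. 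Hence $\mathbb{E}[e^{\mathrm{i}t T_n}]\to\mathbb{E}[U_n(t)\exp(\tfrac12 t^2 V)]$, and since $V$ is $\mathcal{F}_\infty$-measurable while $\mathbb{E}[U_n(t)]=1$, this limit is $\mathbb{E}[\exp(-\tfrac12 t^2 V)]$, the stated characteristic function of $W$. When $V$ is a.s. constant this is a genuine Gaussian; in general it is a normal variance mixture, which is exactly the signature of the conditioning on $\mathcal{F}_\infty$.

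The hard part will be the analytic bookkeeping forced by the infinite sum and the random limiting variance. I expect two delicate points. The first is establishing the uniform integrability of the product $U_n(t)$, which is what legitimises passing to the limit inside the expectation; here the conditional Lindeberg condition is used to show that the array is asymptotically negligible and that the tail of the series is uniformly small. The second is justifying that the factor $\exp(\tfrac12 t^2 V)$ may be separated from $U_n(t)$ when $V$ is genuinely random. The clean way around this is to work conditionally on $\mathcal{F}_\infty$: under this conditioning $V$ is constant and the differences $Z_{nk}$ remain a martingale-difference array, so one first obtains conditional asymptotic normality $\mathcal{N}(0,V)$ and then integrates over $\mathcal{F}_\infty$ to recover the mixture. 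This conditional (stable) convergence step, rather than the algebra of the logarithmic expansion, is the genuine obstacle.
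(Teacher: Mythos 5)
A preliminary remark: the paper does not prove this statement at all. It is imported verbatim from \cite{eagleson1978limit} and used as a black box; the paper's own work consists of verifying its two hypotheses for the $U$-statistic sequence (Propositions~\ref{prop:martingale}, \ref{prop:asymptotic_variance} and~\ref{prop:lindeberg_condition}). Your proposal must therefore be judged as a reconstruction of the cited result, and as such it follows the right road: McLeish's product/characteristic-function method for martingale-difference arrays, applied to the reversed-order array $(Z_{nk})_{k \ge n}$, together with the crucial structural fact that $V$ is $\mathcal{F}_\infty$-measurable with $\mathcal{F}_\infty \subseteq \mathcal{F}_k$ for every $k$, so that in the reversed ordering the random variance is measurable with respect to the \emph{initial} $\sigma$-field of the filtration. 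That is exactly what produces a mixture of normals rather than a normal law, and your diagnosis of where the real work lies (uniform integrability of the product, and the separation of the factor involving $V$) is accurate.

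Three repairs are needed to turn the sketch into a proof. First, a sign slip: from $e^{ix} = (1+ix)\exp(-x^2/2 + r(x))$, the factor multiplying $U_n(t)$ is $\exp\big(-\tfrac{t^2}{2}\sum_k Z_{nk}^2 + \sum_k r_{nk}\big)$, which tends to $\exp(-\tfrac12 t^2 V)$; your intermediate expression $\mathbb{E}[U_n(t)\exp(+\tfrac12 t^2 V)]$ is inconsistent with the conclusion you then draw from it. Second, and more substantively, $\mathbb{E}[U_n(t)]=1$ by itself does \emph{not} let you replace $\mathbb{E}[U_n(t)e^{-t^2V/2}]$ by $\mathbb{E}[e^{-t^2V/2}]$, because $U_n(t)$ and $V$ are correlated; the statement you actually need, and which the reverse-martingale structure delivers, is $\mathbb{E}[U_n(t)\mid\mathcal{F}_\infty]=1$ a.s. (iterate the conditioning $\mathbb{E}[\,\cdot\mid\mathcal{F}_{k+1}]$ over the factors, then use $\mathcal{F}_\infty\subseteq\mathcal{F}_{K+1}$ and the tower property), after which the factorisation is legitimate precisely because $e^{-t^2V/2}$ is $\mathcal{F}_\infty$-measurable. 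Your alternative fix --- conditioning on $\mathcal{F}_\infty$ throughout --- also works, but then hypotheses (1) and (2), which are convergences in $\mathbb{P}$-probability, must be transferred to the regular conditional laws given $\mathcal{F}_\infty$, and that requires an almost-sure/subsequence argument you should not leave implicit. Third, both the sum and the product over $k\ge n$ are infinite, while McLeish's lemma concerns finite arrays: one must truncate at some $K=K(n)$, run the finite-array argument on $\sum_{k=n}^{K}Z_{nk}$, and control the tail $\sqrt{n}(M_{K+1}-M_\infty)$ in $L^2$ via the orthogonality of reverse-martingale increments; this choice of $K(n)$ and the accompanying tail estimate are exactly the bookkeeping your outline defers. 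None of these is a fatal flaw in the strategy, but the second point is the one where a literal reading of your middle paragraph would yield an invalid proof.
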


\begin{proof}[Proof of Theorem~\ref{th:mixture_theorem}]
    The three steps to apply Theorem~\ref{th:eagleson} to $(M_N)_{N \geq 1} = (U^h_N)_{N \geq 1}$ are to show that it is a backward martingale for a well chosen filtration and that it fulfills conditions $1$ and $2$. The expression of $V$ is made explicit along the way. More precisely, 
    \begin{enumerate}
        \item first, defining $\mathcal{F}_N = \sigma\big((U^h_{k,l}, k \ge m_N, l \ge n_N)\big)$, Proposition~\ref{prop:martingale} states that $(U^h_N, \mathcal{F}_N)_{N \ge 1}$ is indeed a square-integrable reverse martingale ;
        \item then, Proposition~\ref{prop:asymptotic_variance} implies that $\sum_{K=N}^\infty \mathbb{E}[Z_{NK}^2 | \mathcal{F}_{K+1}]$, where $Z_{NK} := \sqrt{N} (U_K-U_{K+1})$, does converge to a random variable $V$ with the desired expression ;
        \item finally, the conditional Lindeberg condition is ensured by Proposition~\ref{prop:lindeberg_condition}, since  from it, we deduce that for all $\epsilon > 0$, $\sum_{K=N}^\infty \mathbb{E}[Z_{NK}^2 \mathds{1}_{\{ |Z_{NK}| > \epsilon\}} | \mathcal{F}_{K+1}] \xrightarrow[N \rightarrow \infty]{\mathbb{P}} 0$.
    \end{enumerate}
    Hence, if $V$ is positive, Theorem~\ref{th:eagleson} can be applied to $U^h_N$ and we obtain that $\sqrt{N}(U^h_N - U^h_\infty) \xrightarrow[N \rightarrow \infty]{\mathcal{D}} W$, where $W$ is a random variable with characteristic function $\phi(t) = \mathbb{E}[\exp(-\frac{1}{2} t^2 V)]$. The proofs of Propositions~\ref{prop:martingale}, ~\ref{prop:asymptotic_variance} and ~\ref{prop:lindeberg_condition}  are provided in Appendices~\ref{app:square},~\ref{app:variance} and~\ref{app:lindeberg} respectively.
\end{proof}

\subsection{Proof of Theorem~\ref{th:gaussian_theorem}}

The proof of Theorem~\ref{th:gaussian_theorem} relies on a Hewitt-Savage type zero-one law for events that are permutable in our row-column setup. Therefore, it is useful to define first what a row-column permutable event is. We remind the Aldous-Hoover representation theorem for dissociated RCE matrices as stated earlier: if $Y$ is a dissociated RCE matrix, then its distribution can be written with $(\xi_i)_{1 \le i < m_N}$, $(\eta_j)_{1 \le j < n_N}$ and $(\zeta_{ij})_{1 \le i \le m_N, 1 \le j \le n_N}$ arrays of i.i.d. random variables.

Then let us consider such arrays of i.i.d. random variables $(\xi_i)_{1 \le i < m_N}$, $(\eta_j)_{1 \le j < n_N}$ and $(\zeta_{ij})_{1 \le i \le m_N, 1 \le j \le n_N}$. If we were to consider events depending only on them, there is no loss of generality in using the product probability space $(\Omega_N, \mathcal{A}_N, \mathbb{P}_N)$, where
\begin{equation*}
    \begin{split}
        \Omega_N &= \left\{ (\omega^\xi, \omega^\eta, \omega^\zeta) : \omega^\xi \in \mathbb{R}^{m_N}, \omega^\eta \in \mathbb{R}^{n_N}, \omega^\zeta \in \mathbb{R}^{m_N n_N} \right\} = \mathbb{R}^{m_N + n_N + m_N n_N}, \\
        \mathcal{A}_N &= \mathcal{B}(\mathbb{R})^{m_N + n_N + m_N n_N}, \\
        \mathbb{P}_N &= \mu^{m_N + n_N + m_N n_N}.
    \end{split}
\end{equation*}
We then define the action of a row-column permutation on an element of $\Omega_N$.
\begin{definition}
  Let $\Phi = (\sigma_1, \sigma_2) \in \mathbb{S}_{m_N} \times \mathbb{S}_{n_N}$. The action of $\Phi$ on $\omega \in \Omega_N$ is defined by
  \begin{equation*}
      \Phi \omega = \big(\sigma_1 \omega^\xi, \sigma_2 \omega^\eta, (\sigma_1,\sigma_2) \omega^\zeta\big)
  \end{equation*}
  where $\sigma_1 \omega^\xi = (\omega^\xi_{\sigma_1(i)})_{1 \le i < m_N}$, $\sigma_2 \omega^\eta = (\omega^\eta_{\sigma_2(j)})_{1 \le j < n_N}$ and $(\sigma_1,\sigma_2) \omega^\zeta = (\omega^\zeta_{\sigma_1(i) \sigma_2(j)})_{1 \le i < m_N, 1 \le j < n_N}$
\end{definition}

\begin{definition}
  Let $A \in \mathcal{A}_N$. $A$ is invariant by the action of $\mathbb{S}_{m_N} \times \mathbb{S}_{n_N}$ if and only if for all $\Phi \in \mathbb{S}_{m_N} \times \mathbb{S}_{n_N}$, $\Phi^{-1} A = A$, i.e.
  \begin{equation*}
      \left\{ \omega : \Phi \omega \in A \right\} = \left\{ \omega : \omega \in A \right\}.
  \end{equation*}
\end{definition}

\begin{notation*}
  In this section, we denote by $\mathcal{E}_N$ the collection of events of $\mathcal{A}_N$ that are invariant by row-column permutations of size $m_N \times n_N$, i.e. $\Phi \in \mathbb{S}_{m_N} \times \mathbb{S}_{n_N}$. We denote $\mathcal{E}_\infty := \bigcap_{n = 1}^{\infty} \mathcal{E}_N$, which is the collection of events that are invariant by permutations of size $m_N \times n_N$, for all $N$.
\end{notation*}

The following theorem is an extension of the Hewitt-Savage zero-one law to the row-column setup. 
\begin{theorem}
  For all $A \in \mathcal{E}_\infty$, $\mathbb{P}(A) = 0$ or $\mathbb{P}(A) = 1$.
  \label{th:hewitt_savage}
\end{theorem}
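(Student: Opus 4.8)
The plan is to mimic the classical proof of the Hewitt--Savage zero--one law, replacing the single sequence of i.i.d. coordinates by the three families $(\xi_i)_i$, $(\eta_j)_j$ and $(\zeta_{ij})_{i,j}$ of i.i.d. $\mu$-distributed coordinates, and replacing an index-shift by a row-column permutation $\Phi = (\sigma_1, \sigma_2)$ that simultaneously displaces rows, columns and cells off the support of an approximating cylinder. Throughout, I work on the infinite product space carrying all three families, of which each $(\Omega_N, \mathcal{A}_N, \mathbb{P}_N)$ is the restriction to the first $m_N$ rows, $n_N$ columns and the corresponding $m_N \times n_N$ block of cells; an event of $\mathcal{E}_\infty$ is then exactly one that is invariant under $\mathbb{S}_{m_N} \times \mathbb{S}_{n_N}$ for every $N$, i.e. under every finite row-column permutation.

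First I would set up the approximation. Fix $A \in \mathcal{E}_\infty$ and $\epsilon > 0$. Since the cylinder events depending on finitely many coordinates generate the underlying $\sigma$-algebra, a standard approximation argument yields a cylinder $B$ with $\mathbb{P}(A \triangle B) < \epsilon$; moreover $B$ may be taken to depend only on $\xi_1, \dots, \xi_p$, on $\eta_1, \dots, \eta_q$, and on the cells $\zeta_{ij}$ with $1 \le i \le p$ and $1 \le j \le q$, for some integers $p, q$, so that its support is ``rectangular'' in the three families.

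Next I would choose the displacing permutation. Take $N$ large enough that $2p \le m_N$ and $2q \le n_N$, let $\sigma_1 \in \mathbb{S}_{m_N}$ swap $i \leftrightarrow i + p$ for $1 \le i \le p$, let $\sigma_2 \in \mathbb{S}_{n_N}$ swap $j \leftrightarrow j + q$ for $1 \le j \le q$, and set $\Phi = (\sigma_1, \sigma_2)$. The only genuinely new point compared with the one-dimensional case is that a single $\Phi$ moves all three families at once: it sends the row indices $\{1,\dots,p\}$ to $\{p+1,\dots,2p\}$, the column indices $\{1,\dots,q\}$ to $\{q+1,\dots,2q\}$, and, because the cell action is the product action, it carries the cell block $\{1,\dots,p\}\times\{1,\dots,q\}$ to $\{p+1,\dots,2p\}\times\{q+1,\dots,2q\}$, which is disjoint from it. Hence $B$ and $\Phi^{-1}B := \{\omega : \Phi\omega \in B\}$ depend on disjoint sets of i.i.d. coordinates, so they are independent. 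Because $\Phi$ merely permutes i.i.d. coordinates it is measure-preserving, giving $\mathbb{P}(\Phi^{-1}B) = \mathbb{P}(B)$; and since $A \in \mathcal{E}_N$ we have $\Phi^{-1}A = A$, whence $\mathbb{P}(A \triangle \Phi^{-1}B) = \mathbb{P}(\Phi^{-1}A \triangle \Phi^{-1}B) = \mathbb{P}(A \triangle B) < \epsilon$. Combining these with independence and $A = A \cap A$,
\[
\bigl|\mathbb{P}(A) - \mathbb{P}(A)^2\bigr| \le \bigl|\mathbb{P}(A) - \mathbb{P}(B \cap \Phi^{-1}B)\bigr| + \bigl|\mathbb{P}(B)\mathbb{P}(\Phi^{-1}B) - \mathbb{P}(A)^2\bigr| \le 4\epsilon,
\]
so letting $\epsilon \to 0$ gives $\mathbb{P}(A) = \mathbb{P}(A)^2$, that is $\mathbb{P}(A) \in \{0,1\}$.

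I expect the main obstacle to be bookkeeping rather than conceptual: one must check carefully that the approximating cylinder can indeed be chosen with rectangular support, and that the product action on the cell array together with the chosen $\sigma_1, \sigma_2$ really does transport the finite cell block of $B$ to a disjoint block, so that a \emph{single} row-column permutation simultaneously decouples all three families. Once this is verified, the measure-preservation and independence steps are identical to the classical Hewitt--Savage argument, and the use of $m_N, n_N \to \infty$ is only to guarantee that the required swap lies in $\mathbb{S}_{m_N} \times \mathbb{S}_{n_N}$ for $N$ large.
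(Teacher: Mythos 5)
Your proof is correct and follows essentially the same route as the paper: approximate $A \in \mathcal{E}_\infty$ by a finite-dimensional cylinder event, apply a row-column block-swap permutation (legitimate since $A$ is invariant under every $\mathbb{S}_{m_N} \times \mathbb{S}_{n_N}$) to produce an independent, equally probable copy of the approximant, and conclude $\mathbb{P}(A) = \mathbb{P}(A)^2$. The only difference is bookkeeping: the paper swaps blocks of size $m_N \times n_N$ along a sequence of approximants $A_N$, while you fix one $\epsilon$-cylinder with rectangular support $p \times q$ and swap that block — both are the same Feller-style Hewitt--Savage argument.
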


The proof of Theorem~\ref{th:hewitt_savage} is given in Appendix~\ref{app:normality}. Now we use this result to derive Theorem~\ref{th:gaussian_theorem} from Theorem~\ref{th:mixture_theorem}.

\begin{proof}[Proof of Theorem~\ref{th:gaussian_theorem}]
  In this proof, we specify the matrices over which the $U$-statistics are taken, i.e. for a RCE matrix $Y$, we denote $U^h_{k,l}(Y)$ instead of $U^h_{k,l}$ the $U$-statistic of size $k \times l$ with kernel $h$ taken on $Y$, given by formula~\eqref{eq:ustats}, and analogously $U^h_N(Y) := U^h_{m_N, n_N}(Y)$ and $U^h_\infty(Y) := \mathbb{E}[h(Y_{(1,2;1,2)})|\mathcal{F}_\infty(Y)]$. We denote also $\mathcal{F}_N(Y) := \sigma\big((U^h_{k,l}(Y), k \ge m_N, l \ge n_N)\big)$ which are sets of events depending on $Y$, and $\mathcal{F}_\infty(Y) := \bigcap_{n = 1}^{\infty} \mathcal{F}_N(Y)$.

  Since $Y$ is RCE and dissociated, Proposition 3.3 of~\cite{aldous1981representations} states the existence of a real function $f$ such that for $1 \le i,j < \infty$, $Y^*_{ij} = f(\xi_i, \eta_j, \zeta_{ij})$ and $Y^* \overset{\mathcal{D}}{=} Y$, where  $\xi_i$, $\eta_j$ and $\zeta_{ij}$, for $1 \le i,j < \infty$ are i.i.d. random variables with uniform distribution on $[0,1]$. Therefore we can consider such function $f$ and  these random variables, the product spaces $(\Omega_N, \mathcal{A}_N, \mathbb{P}_N)$ and the sets $\mathcal{E}_N$ of invariant events defined earlier.
  
  But $\mathcal{F}_N(Y^*) = \sigma\big((U^h_{k,l}(Y^*), k \ge m_N, l \ge n_N)\big) \subset \sigma(U^h_N(Y^*), \xi_i, \eta_j, \zeta_{ij}, i > m_N, j > n_N)$, so for all $N$, $\mathcal{F}_N(Y^*) \subset \mathcal{E}_N$. It follows that $\mathcal{F}_\infty(Y^*) \subset \mathcal{E}_\infty$, so $U_\infty(Y^*)$ is $\mathcal{E}_\infty$-measurable. Theorem~\ref{th:hewitt_savage} states that all the events in $\mathcal{E}_\infty$ happen with probability 0 or 1, so it ensures that $U^h_\infty(Y^*) = \mathbb{E}[h(Y^*_{\{1,2;1,2\}})|\mathcal{F}_\infty(Y^*)] = \mathbb{E}[h(Y^*_{\{1,2;1,2\}})]$ is constant. Moreover, since the distribution of $U^h_{N}(Y)$ is the same as this of $U^h_{N}(Y^*)$, we can conclude that $U^h_\infty(Y) = \mathbb{E}[h(Y_{(1,2;1,2)})|\mathcal{F}_\infty(Y)] = \mathbb{E}[h(Y_{(1,2;1,2)})]$. 
  
  Likewise, we deduce that $\mathbb{E}[h(Y_{(1,2;1,2)})h(Y_{(1,3;3,4)})|\mathcal{F}_\infty(Y)] = \mathbb{E}[h(Y_{(1,2;1,2)})h(Y_{(1,3;3,4)})]$ and $\mathbb{E}[h(Y_{(1,2;1,2)})h(Y_{(3,4;1,3)})|\mathcal{F}_\infty(Y)] = \mathbb{E}[h(Y_{(1,2;1,2)})h(Y_{(3,4;1,3)})]$ which gives the desired result for $V$. Thus we conclude that $W$ of Theorem~\ref{th:mixture_theorem} follows a Gaussian distribution of variance $V$.
  
\end{proof}

\subsection{Proof of Theorem~\ref{th:multivariate}}

The proof of Theorem~\ref{th:multivariate} relies on the Cram\'er-Wold theorem (see Theorem 29.4 of~\citealp{billingsley1995probability}). If is enough to show that any linear combination of $U$-statistics converges to the corresponding linear combination of their limits.

\begin{proof}[Proof of Theorem~\ref{th:multivariate}]
  Let $(Z^{h_k})_{1 \le k \le n}$ be a vector of random variables following a centered multivariate Gaussian distribution with covariance matrix $\Sigma$ defined in the theorem. Then $Z^{h_k} \sim \mathcal{N}(0, V^{h_k})$ for all $1 \le k \le n$ and $\Cov(Z^{h_i}, Z^{h_j}) = C^{h_i, h_j}$ for all $1 \le i \le n$ and $1 \le j \le n$.
  
  For some $t = (t_1, t_2, ..., t_n) \in \mathbb{R}^n$, we set $h_t := t_1 h_1 + t_2 h_2 + ... + t_n h_n$. First, assume that $t \neq (0,...,0)$. Then by hypothesis, $h_t \not\equiv 0$, therefore $\sum_{k=1}^n t_k U^{h_k}_N = U^{h_t}_N$ is a $U$-statistic with quadruplet kernel $h_t$. Using Cauchy-Schwarz inequality and the fact that $\mathbb{E}[h_k(Y_{(1,2;1,2)})^2] < \infty$ for all $1 \le k \le n$, we have furthermore 
  \begin{equation*}
  \begin{split}
      \mathbb{E}[h_t(Y_{(1,2;1,2)})^2] = &~ \sum_{k=1}^n t_k^2 \mathbb{E}[h_k(Y_{(1,2;1,2)})^2] + 2 \sum_{1\le k\neq\ell \le n} t_k t_\ell \mathbb{E}[h_k(Y_{(1,2;1,2)})h_\ell(Y_{(1,2;1,2)})], \\
      \le&~ \sum_{k=1}^n t_k^2 \mathbb{E}[h_k(Y_{(1,2;1,2)})^2] + 2 \sum_{1\le k\neq\ell \le n} t_k t_\ell \sqrt{\mathbb{E}[h_k(Y_{(1,2;1,2)})^2]\mathbb{E}[h_\ell(Y_{(1,2;1,2)})^2]}, \\
      <&~ \infty.
  \end{split}
  \end{equation*}
  Therefore, Theorem~\ref{th:gaussian_theorem} also applies for $U^{h_t}_N$ and $\sqrt{N} (U^{h_t}_N - U^{h_t}_\infty) \xrightarrow[N \rightarrow \infty]{\mathcal{D}} \mathcal{N}(0, V^{h_t})$, where $U^{h_t}_\infty = \sum_{k=1}^n t_k U^{h_k}_\infty$ and $V^{h_t} = \sum_{k=1}^n \sum_{\ell=1}^n t_k t_\ell C^{h_k,h_\ell} = t^T \Sigma t$ with $C^{h_k,h_k} = V^{h_k} > 0$ since Theorem~\ref{th:gaussian_theorem} applies. This means that $\sqrt{N} (U^{h_t}_N - U^{h_t}_\infty) = \sqrt{N} \sum_{k=1}^n t_k (U^{h_k}_N - U^{h_k}_\infty) \xrightarrow[N \rightarrow \infty]{\mathcal{D}} \sum_{k=1}^n t_k Z^{h_k}$.
  
  Now assume that $t = (0,...,0)$. Then $h_t \equiv 0$ so $U^{h_t}_N = 0 = \sum_{k=1}^n t_k Z^{h_k}$. Therefore, $\sqrt{N} (U^{h_t}_N - U^{h_t}_\infty) = \sqrt{N} \sum_{k=1}^n t_k (U^{h_k}_N - U^{h_k}_\infty) \xrightarrow[N \rightarrow \infty]{\mathcal{D}} \sum_{k=1}^n t_k Z^{h_k}$ is still true. 
  
  We have proven that $\sqrt{N} (U^{h_t}_N - U^{h_t}_\infty) \xrightarrow[N \rightarrow \infty]{\mathcal{D}} \sum_{k=1}^n t_k Z^{h_k}$ for all $t \in \mathbb{R}^n$, so we can finally apply the Cram\'er-Wold theorem (Theorem 29.4 of~\citealp{billingsley1995probability}) which states that $\sqrt{N}\left(U^{h_k}_N - U^{h_k}_\infty\right)_{1\le k \le n}$ converges jointy in distribution to $(Z^{h_k})_{1 \le k \le n}$, which is a centered multivariate Gaussian with covariance matrix $\Sigma$, so this concludes the proof.
\end{proof}

\subsection{Proof of Theorem~\ref{th:kallenberg_slln}}

The proof for the first part of the theorem can be derived from the proof of Proposition~\ref{prop:martingale}, without needing the hypothesis $\mathbb{E}[h(Y_{(1,2;1,2)})^2] < \infty$. Indeed, it is enough to show that $(U^h_N, \mathcal{F}_N)_{N \ge 1}$ is a (not necessarily square-integrable) backward martingale and to apply Theorem~\ref{th:martingale_convergence}.

As for the dissociated case, $\mathbb{E}[h(Y_{(1,2;1,2)})|\mathcal{F}_\infty] = \mathbb{E}[h(Y_{(1,2;1,2)})]$ is ensured by the proof of Theorem~\ref{th:gaussian_theorem}.

\section{Applications \label{sec:app}}

In this section, we illustrate how the results from the previous section can be used for statistical inference for network data through different examples. First, we introduce the Bipartite Expected Degree Distribution (BEDD) models, a family of RCE models and we show how Theorems~\ref{th:mixture_theorem} and~\ref{th:gaussian_theorem} apply. Then, we detail three examples to show how one might exploit the $U$-statistics properties to analyze networks. In the first example, we use different quadruplet kernels to estimate the row heterogeneity of a network, with the help of the delta-method. Next, we extend this example to build a statistical test to compare the row heterogeneity of two networks. In the last example, we use $U$-statistics to estimate the frequency of network motifs.

\subsection{The BEDD model \label{subsec:bedd}}

\paragraph{General model} As examples of models for RCE matrices, we consider the family of the BEDD models, which are weighted, bipartite and exchangeable extensions of the Expected Degree Sequence model~\citep{chung2002average, ouadah2022motif}. For binary graphs, the degree of a node is the number of edges that stem from it. For weighted graphs, the equivalent notion is the sum of the weights of these edges. It is sometimes called node strength~\citep{barrat2004architecture}, but we will simply refer to it as node weight. A BEDD model draws the node weights from two distributions, characterised by c\`adl\`ag, non-decreasing and bounded real functions $f$ and $g$ of $[0,1] \rightarrow \mathbb{R}_+$. The expected edge weights $Y_{ij}$ are then proportional to the expected weights of the involved nodes. A BEDD model can be written in a hierarchical form
\begin{equation}
\begin{split}
    \xi_i, \eta_j &\overset{iid}{\sim} \mathcal{U}[0,1] \\ 
    Y_{ij}~|~\xi_i, \eta_j &\sim \mathcal{L}(\lambda f(\xi_i) g(\eta_j)).
\end{split}
\label{eq:wbedd}
\end{equation} 
where given any real number $\mu \ge 0$, we denote by $\mathcal{L}(\mu)$ a family of probability distributions with expectation $\mu$ and finite variance, $\lambda$ is a positive real number and $f$ and $g$ are normalized by the condition $\int f = \int g = 1$. For a graph of size $m \times n$, conditionally to $\lambda$, $\xi_i$ and $\eta_j$, the expected weight of the $i$-th row is $n \lambda f(\xi_i)$ and the expected weight of the $j$-th column is $m \lambda g(\eta_j)$. Consequently, $\lambda$ is the mean intensity of the network. One can define different BEDD models by specifying different families of distributions $\mathcal{L}$. For a family $\mathcal{L}$, we refer to the $\mathcal{L}$-BEDD model (e.g. Poisson-BEDD or Bernoulli-BEDD). 

Furthermore, we define two versions of BEDD models: 
\begin{description}
    \item[Version 1] $\lambda$ is constant,
    \item[Version 2] $\lambda$ is a random variable.
\end{description}

By construction, the BEDD models are RCE, so Theorem~\ref{th:mixture_theorem} can be applied to matrices $Y$ generated by these two versions of BEDD models. Theorem~\ref{th:gaussian_theorem} only applies to Version 1, where the matrix is dissociated. Indeed, we see that in both models conditionally on $\lambda$, the expected mean of the interactions of any submatrix is $\lambda$. Therefore any 2 submatrices are independent if $\lambda$ is constant. We could also have noticed that $\lambda$ is determined by the $\alpha$ from the representation theorem of Aldous-Hoover, see equation~\eqref{eq:aldous}. As a remark, it is straightforward that unlike Version 2, Version 1 of BEDD models can be written as a $W$-graph model as in Formula~\eqref{eq:graphon}, setting $\mathcal{W}(\xi_i, \eta_j) := \mathcal{L}(\lambda f(\xi_i) g(\eta_j))$. 

Since Theorem~\ref{th:gaussian_theorem} only applies to Version 1, we will be only considering this version in the rest of the article.

\begin{definition}
    Given a family of distributions $\mathcal{L}(\mu)$ a family of probability distributions with expectation $\mu$ and finite variance, a $\mathcal{L}$-BEDD model is a semi-parametric model described by the triplet $\Theta = (\lambda, f, g)$ where
    \begin{enumerate}
        \item $\lambda \in \mathbb{R}$,
        \item $f$ and $g$ are real functions $f$ and $g$ of $[0,1] \rightarrow \mathbb{R}_+$ which are bounded, c\`adl\`ag, non-decreasing and normalized with $\int f = \int g = 1$.
    \end{enumerate}
    We call $\Theta$ the BEDD parameters and the matrix $Y$ generated by a $\mathcal{L}$-BEDD model with these parameters is written $Y \sim \mathcal{L}\text{-BEDD}(\Theta)$ and is described by~\eqref{eq:wbedd}, for all $(i,j) \in \mathbb{N}^2$.
    \label{def:wbedd}
\end{definition}

In this definition, the normalizing constraint on $\int f = \int g = 1$ ensures that $\mathbb{E}[Y_{ij}] = \lambda$ for all $(i,j) \in \mathbb{N}^2$. The boundedness of $f$ and $g$ ensures that the variables $f(\xi_i)$ and $g(\eta_j)$ are bounded and their moments exist. In the binary case (Bernoulli-BEDD), it also puts a condition on $\lambda$. Since $\mathbb{P}(Y_{ij} = 1 | \xi_i, \eta_j) = \lambda f(\xi_i) g(\eta_j)$, the condition $\lambda \le \lVert f \rVert^{-1}_\infty \lVert g \rVert^{-1}_\infty$ must hold. The non-decreasing and c\`adl\`ag conditions are similar to the condition of~\cite{bickel2009nonparametric} for their random graph model and ensures the identifiability of the model since otherwise, $f$ and $g$ could be replaced with any $f \circ \pi_1$ and $g \circ \pi_2$, where $\pi_1$ and $\pi_2$ are measure-preserving transformations.

\paragraph{Identifiability by a quadruplet} In addition to being a dissociated RCE model, the BEDD models are particularly well adapted to use of quadruplet $U$-statistics. In this paragraph, we show that for some choices of $\mathcal{L}$ such as the Poisson distribution, the model can be recovered by a single quadruplet. We use the following two theorems of which proofs are given in Appendix~\ref{app:ident}. The first theorem implies that the functions $f$ and $g$ of the BEDD models are characterised by their moments $F_k := \int f^k$ and $G_k := \int g^k$. 

\begin{theorem}
    Let $\Theta = (\lambda, f, g)$ be BEDD parameters and $Y \sim \mathcal{L}\text{-BEDD}(\Theta)$ for some family of distributions $\mathcal{L}$. The distribution of $Y$ is uniquely determined by $\lambda$, $(F_k)_{k \ge 1}$ and $(G_k)_{k \ge 1}$, where $F_k := \int f^k$ and $G_k := \int g^k$ for all $k \ge 1$.
    \label{th:id_bedd_param}
\end{theorem}

Now we specify an assumption on the family of distributions $\mathcal{L}(\mu)$, under which a quadruplet identifies the parameters $\Theta = (\lambda, f, g)$ of a BEDD model. 
\begin{assumption}
    For the family of distributions $\mathcal{L}(\mu)$, there exists a sequence of functions $(\Psi_k)_{k \ge 1}$ such that if a random variable $X \sim \mathcal{L}(\mu)$, then for every $k \ge 1$, 
    \begin{equation*}
        \mathbb{E}[\Psi_k(X)] = \mu^k.
    \end{equation*}
    \label{ass:distfunc}
\end{assumption}

This assumption holds for many usual distributions families such as the Poisson or the Binomial distributions. As an example, if $X$ follows a Poisson distribution, we have $\Psi_k(X) = X(X-1)...(X-k+1)$. This assumption does not hold for the Bernoulli distribution, as if $X \sim \text{Bernoulli}(\mu)$, for any function $\varphi$, $\mathbb{E}[\varphi(X)] = \mu \varphi(1)$. This assumption is a sufficient condition to be able to recover the BEDD parameters from the joint distribution of a quadruplet.

\begin{theorem}
    If Assumption~\ref{ass:distfunc} holds for the family of distributions $\mathcal{L}(\mu)$, then for all $k \in \mathbb{N}$, $F_k$ and $G_k$ are uniquely determined by the joint distribution of a quadruplet.
    \label{th:id_quadruplet}
\end{theorem}

This theorem suggests that all the BEDD information is contained in the distribution of a quadruplet, therefore it is possible to extract any information only with quadruplet kernels. Quadruplet $U$-statistics are then especially of interest.

\subsection{Heterogeneity in the row weights of a network \label{subsec:est}}

In this first example, we are interested in evaluating the heterogeneity of the row weights of a network. In the BEDD models, conditional to the latent variables $(\xi_i)_{1 \le i \le m}$ following a uniform distribution on $[0,1]$, the expected weights of the row nodes are given by the $(f(\xi_i))_{1 \le i \le m}$, as seen in Section~\ref{subsec:bedd}. Therefore, the heterogeneity of the rows, i.e. the variance of the expected weight of a row node can be quantified by $F_2 := \int_0^1 f^2(u) du$. In the example of an interaction network, if $f$ is constant, i.e. $f \equiv 1$ and $F_2 = 1$, then the expected weight is constant for all rows and the row weight distribution is homogeneous, with all the row individuals having around the same number of interactions. Besides, the higher $F_2$ is, the more this distribution is unbalanced. In ecology, a large value of $F_2$ indicates a strong distinction between generalist (with high degree) and specialists (with low degree) species. 

$F_2$ can be estimated using $\widehat{\theta}_N := U^{h_1}_N/U^{h_2}_N$ where $U^{h_1}_N$ and $U^{h_2}_N$ are the $U$-statistics based on the quadruplet kernels $h_1$ and $h_2$ defined as
\begin{equation*}
    h_1(Y_{(i_1,i_2;j_1,j_2)}) = \frac{1}{2}(Y_{i_1j_1}Y_{i_1j_2} + Y_{i_2j_1}Y_{i_2j_2}),
\end{equation*}
and
\begin{equation*}
    h_2(Y_{(i_1,i_2;j_1,j_2)}) = \frac{1}{2}(Y_{i_1j_1}Y_{i_2j_2} + Y_{i_2j_1}Y_{i_1j_2}).
\end{equation*}

\begin{proposition}
    Let $\widehat{\theta}_N := U^{h_1}_N/U^{h_2}_N$ be defined as above. Then 
\begin{equation}
    \sqrt{\frac{N}{V^\delta}} \bigg(\widehat{\theta}_N - F_2\bigg) \xrightarrow[N \rightarrow \infty]{\mathcal{D}} \mathcal{N}(0, 1),
    \label{eq:convergence_vdt}
\end{equation}
where 
\begin{equation}
    V^\delta = \frac{1}{c} \left(F_4 + F_2(4 F_2^2 - F_2 - 4 F_3) \right)
    \label{eq:variance_vdt}
\end{equation}
and for all $k > 0$, $F_k := \int f^k$ and $G_k := \int g^k$.
\label{prop:convergence_est}
\end{proposition}

This result comes from the composition of the asymptotic normality of two $U$-statistics. In the following, we show how to obtain equations~\eqref{eq:convergence_vdt} and~\eqref{eq:variance_vdt}. First, we see that $\mathbb{E}[h_1(Y_{(i_1,i_2;j_1,j_2)})] = \lambda^2 F_2$ and $\mathbb{E}[h_2(Y_{(i_1,i_2;j_1,j_2)})] = \lambda^2$. So applying Theorem~\ref{th:gaussian_theorem} successively to $U^{h_1}_N$ and $U^{h_2}_N$ gives the following results ($V^{h_1}$ and $V^{h_2}$ are derived in Lemmas~\ref{lem:expression_vh1} and~\ref{lem:expression_vh2}):
\begin{equation}
    \sqrt{\frac{N}{V^{h_1}}}(U^{h_1}_N - \lambda^2 F_2) \xrightarrow[N \rightarrow \infty]{\mathcal{D}} \mathcal{N}(0, 1),
    \label{eq:convergence_lambda2f2}
\end{equation}
and
\begin{equation}
    \sqrt{\frac{N}{V^{h_2}}}(U^{h_2}_N - \lambda^2) \xrightarrow[N \rightarrow \infty]{\mathcal{D}} \mathcal{N}(0, 1),
    \label{eq:convergence_lambda2}
\end{equation}
where 
\begin{equation}
    V^{h_1} = \frac{\lambda^4}{c} (F_4 - F_2^2) + \frac{4 \lambda^4 }{1-c}  F_2^2 (G_2 - 1),
    \label{eq:variance_lambda2f2}
\end{equation}
and
\begin{equation}
    V^{h_2} = \frac{4 \lambda^4}{c} (F_2 - 1) + \frac{4 \lambda^4}{1-c}  (G_2 - 1).
    \label{eq:variance_lambda2}
\end{equation}
To combine the results~\eqref{eq:convergence_lambda2f2} and~\eqref{eq:convergence_lambda2}, we apply Theorem~\ref{th:multivariate} to $(h_1, h_2)$. We find that
\begin{equation}
    \sqrt{N}\left(\begin{pmatrix} U^{h_1}_N  \\ U^{h_2}_N 
  \end{pmatrix}-\begin{pmatrix} \lambda^2 F_2  \\ \lambda^2
  \end{pmatrix}\right) \xrightarrow[N \rightarrow \infty]{\mathcal{D}} \mathcal{N}(0, \Sigma),
  \label{eq:joint_h1h2}
\end{equation}
with
\begin{equation*}
    \Sigma = \begin{pmatrix} V^{h_1} & C^{h_1,h_2} \\ C^{h_1,h_2} & V^{h_2}
  \end{pmatrix},
\end{equation*}
where $C^{h_1,h_2} = 2 \lambda^4 c^{-1} (F_3 - F_2) + 4 \lambda^4 (1-c)^{-1} F_2 (G_2 - 1)$. The derivation of $C^{h_1,h_2}$ is given by Lemma~\ref{lem:expression_ch1h2}. We suggest two methods to derive the weak convergence result for $\widehat{\theta}_N = U^{h_1}_N/U^{h_2}_N$.

\paragraph{First method: the delta-method}

The first-order Taylor expansion of $\phi(U^{h_1}_N, U^{h_2}_N) = U^{h_1}_N/U^{h_2}_N = \widehat{\theta}_N$ at the point $(U^{h_1}_N, U^{h_2}_N) = (\lambda^2 F_2, \lambda^2)$ is
\begin{equation*}
  \widehat{\theta}_N - F_2 = \nabla \phi(\lambda^2 F_2, \lambda^2)^T \left(\begin{pmatrix} U^{h_1}_N  \\ U^{h_2}_N 
  \end{pmatrix}-\begin{pmatrix} \lambda^2 F_2  \\ \lambda^2
  \end{pmatrix}\right) + o_P \left(\left\lVert \begin{pmatrix} U^{h_1}_N  \\ U^{h_2}_N 
  \end{pmatrix}-\begin{pmatrix} \lambda^2 F_2  \\ \lambda^2
  \end{pmatrix}\right\rVert \right)
\end{equation*}
where $\nabla \phi$ is the gradient of $\phi$ and $\nabla \phi(U^{h_1}_N,U^{h_2}_N)^T = (1/U^{h_2}_N,- U^{h_1}_N/(U^{h_2}_N)^{2})$.

As the result of~\eqref{eq:joint_h1h2}, the delta-method (see Chapter 3 of~\citealp{van2000asymptotic}) gives equation~\eqref{eq:convergence_vdt} with 
\begin{equation*}
    V^\delta = \nabla \phi(\lambda^2 F_2, \lambda^2) \Sigma \nabla \phi(\lambda^2 F_2, \lambda^2)^T = \frac{1}{\lambda^4} V^{h_1} - \frac{2 F_2}{\lambda^4} C^{h_1,h_2} + \frac{F_2^2}{\lambda^4} V^{h_2},
\end{equation*}
which is equation~\eqref{eq:variance_vdt}.

\paragraph{Second method}

The delta-method is a generic method that applies to all differentiable functions $\phi$. However, for our particular case, there is another way to find the same confidence intervals without using the delta-method. Let $t:= (1, -F_2)^T$. One could have noticed that~\eqref{eq:joint_h1h2} also implies
\begin{equation*}
    \sqrt{\frac{N}{V^t}} t^T\left(\begin{pmatrix} U^{h_1}_N  \\ U^{h_2}_N 
  \end{pmatrix}-\begin{pmatrix} \lambda^2 F_2  \\ \lambda^2
  \end{pmatrix}\right) \xrightarrow[N \rightarrow \infty]{\mathcal{D}} \mathcal{N}(0, 1),
\end{equation*}
where $V^t := t^T\Sigma t = V^{h_1} - 2 F_2 C^{h_1,h_2} + F_2^2 V^{h_2} = \lambda^{4} V^{\delta}$. This can be rewritten 
\begin{equation}
    \sqrt{\frac{N}{V^\delta}} \frac{U^{h_2}_N}{\lambda^2} \left(\widehat{\theta}_N - F_2 \right) \xrightarrow[N \rightarrow \infty]{\mathcal{D}} \mathcal{N}(0, 1).
    \label{eq:convergence_vt}
\end{equation}
Since $U^{h_2}_N/\lambda^{2} \xrightarrow[N \rightarrow \infty]{\mathbb{P}} 1$, then Slutsky's theorem yields equations~\eqref{eq:convergence_vdt} and~\eqref{eq:variance_vdt}.

\paragraph{Confidence intervals} In order to exploit equations~\eqref{eq:convergence_vdt} and~\eqref{eq:variance_vdt}, we have to estimate the remaining unknown quantities $\lambda^2$, $G_2$, $F_3$ and $F_4$. We use the kernels $h_3$, $h_4$, $h_5$ and $h_6$ listed in Table~\ref{tab:kernels_1}. Using equation~\eqref{eq:variance_vdt} and two additional $U$-statistics $U^{h_5}_N$ and $U^{h_6}_N$ based on the kernels $h_5$ and $h_6$ defined in Table~\ref{tab:kernels_1}, we build $\widehat{V}^{\delta}_N$ a consistent estimator for $V^{\delta}$, defined as
\begin{equation}
    \widehat{V}^{\delta}_N = \frac{1}{c} \left( \frac{U^{h_4}_N}{(U^{h_3}_{N})^2} + \frac{U^{h_1}_N}{U^{h_2}_N} \left( 4 \frac{(U^{h_1}_N)^2}{(U^{h_2}_N)^2} - \frac{U^{h_1}_N}{U^{h_2}_N} - 4 \frac{U^{h_6}_N}{U^{h_5}_N U^{h_3}_N} \right) \right).
    \label{eq:estimator_vd}
\end{equation}

Finally, it follows from Slutsky's theorem that
\begin{equation}
    \sqrt{\frac{N}{\widehat{V}^{\delta}_N}} \bigg(\widehat{\theta}_N - F_2\bigg) \xrightarrow[N \rightarrow \infty]{\mathcal{D}} \mathcal{N}(0, 1),
    \label{eq:convergence_vd}
\end{equation}
From this result, one can derive the following asymptotic confidence interval at level $\alpha \in ]0,1[$ for $F_2$ using the $(1-\alpha/2)$-th percentile $q_{1 - \alpha/2}$ of the standard normal distribution: for $N \ge 1$,
\begin{equation}
    CI_{F_2}^\delta(\alpha, N) = \left[\widehat{\theta}_N - q_{1 - \alpha/2} \sqrt{\frac{\widehat{V}^{\delta}_N}{N}}, \widehat{\theta}_N + q_{1 - \alpha/2} \sqrt{\frac{\widehat{V}^{\delta}_N}{N}} \right].
    \label{eq:ci_delta}
\end{equation}

\renewcommand{\arraystretch}{2}
\begin{table}[hbt!]
\centering
\begin{tabular}{ |p{0.5cm}|p{9cm}|p{3cm}|  }
 \hline
 $h$ & $h(Y_{(i_1,i_2;j_1,j_2)})$ & $\mathbb{E}[h(Y_{(i_1,i_2;j_1,j_2)})]$ \\[3pt]
 \hline
 $h_1$ & $\displaystyle{\frac12(Y_{i_1j_1}Y_{i_1j_2} + Y_{i_2j_1}Y_{i_2j_2})}$ & $\lambda^2 F_2$ \\[3pt]
 \hline
 $h_2$ & $\displaystyle{\frac12(Y_{i_1j_1} Y_{i_2j_2} + Y_{i_1j_2} Y_{i_2j_1})}$ & $\lambda^2$ \\[3pt]
 \hline
 $h_3$ & $\displaystyle{\frac12(Y_{i_1j_1}Y_{i_2j_1} + Y_{i_1j_2}Y_{i_2j_2})}$ & $\lambda^2 G_2$ \\[3pt]
 \hline
 $h_4$ & $\displaystyle{\frac12 \left((Y_{i_1j_1}^2 -Y_{i_1j_1})(Y_{i_1j_2}^2 -Y_{i_1j_2}) + (Y_{i_2j_1}^2 -Y_{i_2j_1})(Y_{i_2j_2}^2 -Y_{i_2j_2})\right)}$ & $\lambda^4 F_4 G_2^2$ \\[3pt]
 \hline
 $h_5$ & $\displaystyle{\frac14 (Y_{i_1j_1}+Y_{i_1j_2}+ Y_{i_2j_1}+Y_{i_2j_2})}$ & $\lambda$ \\[3pt]
 \hline
 $h_6$ & $\displaystyle{\frac14 (Y_{i_1j_1}Y_{i_1j_2}(Y_{i_1j_1}+Y_{i_1j_2}-2) + Y_{i_2j_1}Y_{i_2j_2}(Y_{i_2j_1}+Y_{i_2j_2}-2))}$ & $\lambda^3 F_3 G_2$ \\[3pt]
 \hline
\end{tabular}
\caption{Kernels $h_1$ to $h_6$ and their expectations}
\label{tab:kernels_1}
\end{table}
\renewcommand{\arraystretch}{1}

\paragraph{Simulations} To illustrate this example, we have simulated networks with the Poisson-BEDD model. We have chosen power functions for $f$ and $g$, i.e. we have set $\alpha_f$ and $\alpha_g$ in $[0,+\infty[$ and $f(u) = (\alpha_f+1) u^{\alpha_f}$ and $g(v) = (\alpha_g+1) v^{\alpha_g}$. Therefore, the values of $F_2$ and $G_2$ can be set by $\alpha_f$ and $\alpha_g$. The constant $c$ is set at $0.5$, so we have considered square matrices ($m = n$). Figure~\ref{fig:estimation_rate} represents the frequency with which 2 confidence intervals, built with respectively equations~\eqref{eq:convergence_vdt} and~\eqref{eq:convergence_vd} for $\alpha = 0.95$, contain the true value of $F_2$. The curve associated with $V^\delta$ suggests that $\sqrt{N}(\widehat{\theta}_N - F_2)$ becomes close to its limiting distribution for $N \gtrsim 250$. For smaller values of $N$, the frequencies are significantly higher than $0.95$, so the confidence intervals are slightly larger than they should. The curve associated with $\widehat{V}^\delta_N$ suggests that $\widehat{V}^{\delta}_N$ underestimates $V^{\delta}$, but using Slutsky to plug in $\widehat{V}^{\delta}_N$ for $V^{\delta}$ in~\eqref{eq:convergence_vd} still leads to acceptable frequencies that converge when $N$ grows, especially for $N \gtrsim 250$. Figure~\ref{fig:estimation_dist} represents the empirical distribution of $\widehat{\theta}_N$ for different sizes $N$. It confirms that $\sqrt{N}(\widehat{\theta}_N - F_2)$ converges quickly to a normal distribution with variance $V^{\delta}$.

\begin{figure}[!hbt]
\includegraphics[width=0.9\linewidth]{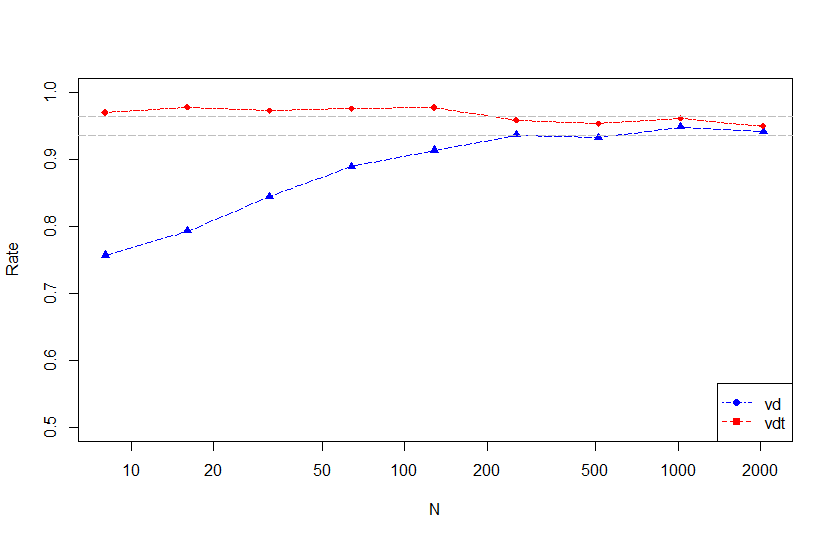}
\caption{Estimation of $F_2$: Frequency of the confidence intervals that contain the true value of $F_2$ for different values of $N$ (on a logarithmic scale). For each $N \in \{8,16,32,64,128,256,512,1024,2048\}$, we simulate $K = 1000$ networks with $\lambda=1$, $F_2=3$, $G_2=2$. For each simulated network, we estimate $F_2$ with the estimator $\widehat{\theta}_N$ and at level $\alpha = 0.95$, we build the asymptotic confidence intervals from the weak convergence results: [vdt] built from~\eqref{eq:convergence_vdt} (true value of $V^\delta$) and [vd] built from~\eqref{eq:convergence_vd} (estimated value of $V^\delta$ by $\widehat{V}^\delta_N$). The horizontal dashed lines represent the confidence interval at level $0.95$ of the frequency $Z = X/K$, if $X$ follows the binomial distribution with parameters $K$ and $\alpha$.}
\label{fig:estimation_rate}
\end{figure}

\begin{figure}[!hbt]
\includegraphics[width=\linewidth]{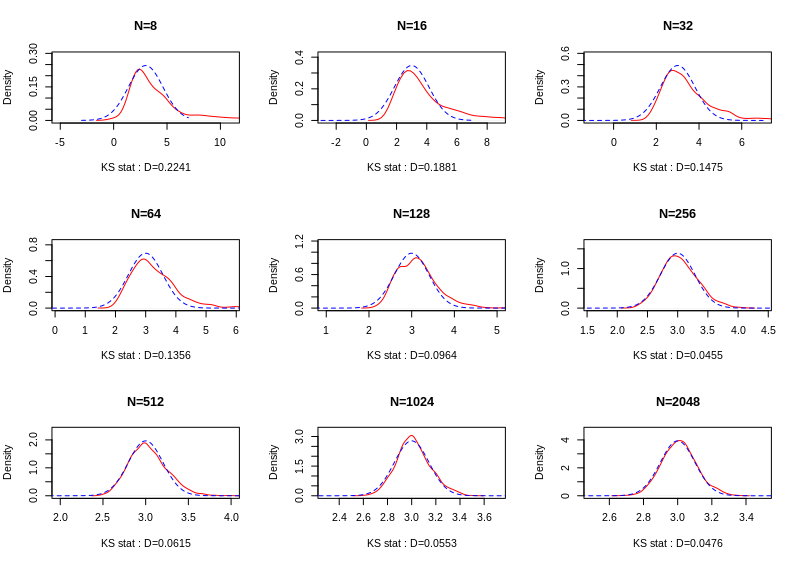}
\caption{Estimation of $F_2$: Distribution of $\widehat{\theta}_N$ for different values of $N$. For each $N \in \{8,16,32,64,128,256,512,1024,2048\}$, we simulate $K = 1000$ networks with $\lambda=1$, $F_2=3$, $G_2=2$. For each simulated network, we estimate $F_2$ with the estimator $\widehat{\theta}_N$. The empirical distributions (solid red lines) are interpolated using the density() function from base R stats package. The dashed curves in blue correspond to the normal distribution densities with mean $F_2 = 3$ and variance $V^\delta/N$. Under each plot, the value of the Kolmogorov-Smirnov test statistic $D$ between the empirical distribution of $\widehat{\theta}_N$ and the normal distribution with mean $F_2 = 3$ and variance $V^\delta/N$ is given. $D=\sup_x | F_{emp}(x) - F(x) |$ where $F_{emp}$ is the empirical c.d.f. of $\widehat{\theta}_N$ and $F(x)$ the c.d.f. of the normal distribution with mean $F_2 = 3$ and variance $V^\delta/N$.}
\label{fig:estimation_dist}
\end{figure}

\subsection{Network comparison \label{subsec:comp}}

Some methods have been developed to compare networks. Network statistics, graph spectra, network motifs or graph alignment methods can be used to build a distance (or similarity scores) between two networks \citep{emmert2016fifty, tantardini2019comparing}. In the context of random networks, fewer comparison methods rely on generative random graph models and they are relatively recent \citep{asta2015geometric, maugis2020testing}. A model-based approach offers two advantages. First, by suggesting a distribution on the networks, one might be able to design a distance with known distribution and therefore use statistical tests to compare networks. Second, the use of a generative model makes it considerably easier to interpret, one can use to the model parameters to design a suitable distance to compare the networks, giving insights into the underlying process generating them. Such ability to interpret is particularly interesting in applications such as ecology, where it is crucial to understand how and why the networks differ \citep{pellissier2018comparing}. In this section, we show how one can extend the usage of $U$-statistics to network comparison, providing a framework for model-based network comparison.

In the previous example, our analysis has been carried out on a single network. Now, consider two independent networks $Y^A$ and $Y^B$ and we wish to compare their row heterogeneity. Assume that they are respectively generated by the BEDD parameters $\Theta^A = (\lambda^A, f^A, g^A)$ and $\Theta^B = (\lambda^B, f^B, g^B)$. Then, each network is associated with their respective values $F_2^A$ and $F_2^B$. The data consists in two observed networks $Y^A_{N_A}$ and $Y^B_{N_B}$, which are assumed to be extracted from the first $m_A$ (respectively $m_B$) rows and $n_A$ (respectively $n_B$) columns of the infinite matrices $Y_A$ and $Y_B$. We would like to perform the following test: $\mathcal{H}_0 : F^A_2 = F^B_2$ vs. $\mathcal{H}_1 : F^A_2 \neq F^B_2$ using the two observed networks.

\paragraph{General method} Let $N := N_A + N_B$. Suppose that $N_A/N \xrightarrow[N \rightarrow + \infty]{} \rho \in ]0,1[$. Then one can simply notice that $\widehat{\delta}_N(Y^A,Y^B) := \widehat{\theta}_{N_A}(Y^A) - \widehat{\theta}_{N_B}(Y^B)$, where $\widehat{\theta}_{N}(Y)$ is the estimator of Proposition~\ref{prop:convergence_est} taken on the matrix $Y$, is still asymptotically normal from equation~\eqref{eq:convergence_vdt}
\begin{equation*}
    \sqrt{\frac{N}{V(\Theta^A, \Theta^B)}} \bigg(\widehat{\delta}_N(Y^A,Y^B) - (F^A_2 - F^B_2)\bigg) \xrightarrow[N \rightarrow \infty]{\mathcal{D}} \mathcal{N}(0, 1),
\end{equation*}
with $V(\Theta^A, \Theta^B) = V^\delta(\Theta^A)/\rho + V^\delta(\Theta^B)/(1-\rho)$ and for BEDD parameters $\Theta$, $V^{\delta}(\Theta)$ is given by~\eqref{eq:variance_vdt}.

Using the estimators $\widehat{V}^{\delta}_N$ stemming from the delta-method~\eqref{eq:estimator_vd}, we build $\widehat{V}_N(Y^A, Y^B)$ a consistent estimator for $V(\Theta^A, \Theta^B)$
$$\widehat{V}_N(Y^A, Y^B) = \frac{1}{\rho}\widehat{V}^{\delta}_N(Y^A)+ \frac{1}{1-\rho}\widehat{V}^{\delta}_N(Y^B).$$
Hence, Slutsky's theorem ensures that
\begin{equation*}
    \sqrt{\frac{N}{\widehat{V}_N(Y^A, Y^B)}} \bigg(\widehat{\delta}_N(Y^A,Y^B) - (F^A_2 - F^B_2)\bigg) \xrightarrow[N \rightarrow \infty]{\mathcal{D}} \mathcal{N}(0, 1).
\end{equation*}

In this example, we consider the statistical test $\mathcal{H}_0 : F^A_2 = F^B_2$ vs. $\mathcal{H}_1 : F^A_2 \neq F^B_2$. So we use the test statistic
\begin{equation}
    Z_N(Y^A, Y^B) = \sqrt{\frac{N}{\widehat{V}_N(Y^A, Y^B)}}\widehat{\delta}_N(Y^A,Y^B),
    \label{eq:test_statistic_comparison}
\end{equation}
for which Slutsky's theorem applies
$$Z_N(Y^A, Y^B) - \sqrt{\frac{N}{\widehat{V}_N(Y^A, Y^B)}}(F^A_2 - F^B_2) \xrightarrow[N \rightarrow + \infty]{\mathcal{D}} \mathcal{N}(0,1).$$

Under $\mathcal{H}_0$, $Z_N(Y^A, Y^B) \xrightarrow[N \rightarrow + \infty]{\mathcal{D}} \mathcal{N}(0,1)$ which allows us to build asymptotic acceptance intervals for this test at level $\alpha$ with the $(1-\alpha/2)^{th}$ percentile $q_{1 - \alpha/2}$ of the standard normal distribution:
$$I(\alpha) = [-q_{1-\frac{\alpha}{2}}, q_{1-\frac{\alpha}{2}}].$$

\paragraph{Simulations} Figure~\ref{fig:comparison_power} shows simulation results for this test. Once again, we consider networks generated by the Poisson-BEDD model with power law functions $f$ and $g$. To perform the test, we generate couples of observed networks $(Y^A_{N_A}, Y^B_{N_B})$ with fixed and identical $\lambda^A = \lambda^B$ and $g^A=g^B$. $f^A$ is also fixed, but we let $f^B$ vary by setting the parameter $\alpha_{f^B}$ of the power law, which is used to set $F_2^B$. The empirical power for this test with varying $F_2^B$ is evaluated for several values of $N$. It is compared with the asymptotic theoretical power $\psi_N(\Theta^A, \Theta^B)$ for this test. Let $\mu_N(\Theta^A, \Theta^B) := \sqrt{\frac{N}{V(\Theta^A, \Theta^B)}}(F^A_2 - F^B_2)$. If a random variable $\tilde{Z}_N$ is such that $\tilde{Z}_N - \mu_N(\Theta^A, \Theta^B) \overset{\mathcal{D}}{\sim} \mathcal{N}(0,1)$, then $\psi_N(\Theta^A, \Theta^B) = \mathbb{P}\left(\tilde{Z}_N \in I(\alpha) \right)$, so it can be computed with
\begin{equation}
    \psi_N(\Theta^A, \Theta^B) = F_{(\Theta^A, \Theta^B)}\left(q_{1-\frac{\alpha}{2}}\right) - F_{(\Theta^A, \Theta^B)}\left(-q_{1-\frac{\alpha}{2}}\right)
    \label{eq:theoretical_power}
\end{equation}
where $F_{(\Theta^A, \Theta^B)}(t)$ is the cumulative distribution function of a Gaussian variable with mean $\mu_N(\Theta^A, \Theta^B)$ and variance $1$. We notice that the empirical power becomes very close to the asymptotic theoretical power as $N$ grows, which suggests that this test works well for networks with $N \gtrsim 100$.

\begin{figure}[!hbt]
\includegraphics[width=\linewidth]{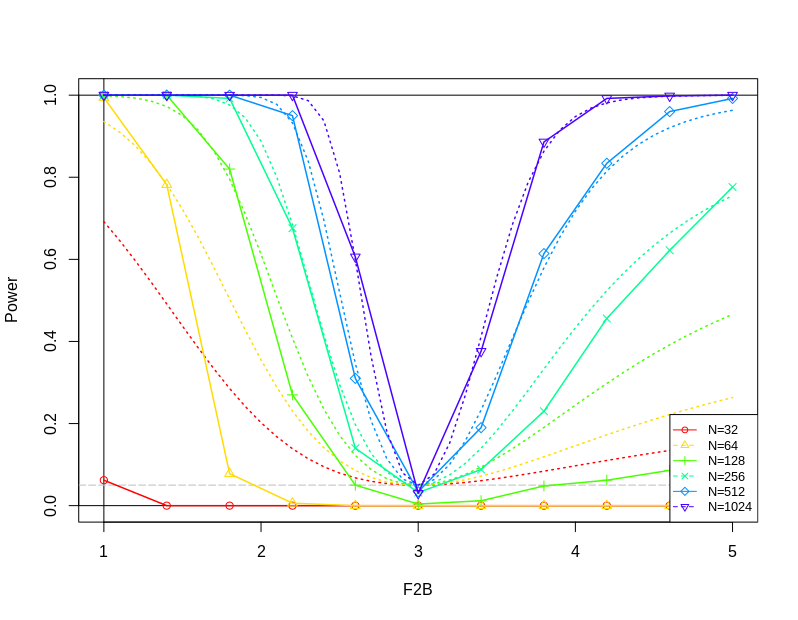}
\caption{Comparison of $F_2$ for two networks: Power of the test $\mathcal{H}_0 : F^A_2 = F^B_2$ vs. $\mathcal{H}_1 : F^A_2 \neq F^B_2$ using the statistic $Z_N(Y^A, Y^B)$ defined by~\eqref{eq:test_statistic_comparison}. We set $\lambda^A = \lambda^B = 1$, $G_2^A = G_2^B = 2$, $c^A = c^B = 0.5$. The value of $F^A_2$ is fixed at $3$. Only $N$ and $F_2^B$ will vary. Several values of $F^B_2$ are considered between $1$ and $5$. For each $N \in \{32,64,128,256,512,1024\}$, for each $F^B_2$, we generate $K = 200$ couple of networks of same size $N^A = N^B = N/2$ with respective $F_2$ values $F^A_2$ and $F^B_2$. On each couple of networks $(Y^A, Y^B)$, we compute $Z_N(Y^A, Y^B)$ and we reject the hypothesis $\mathcal{H}_0 $ if $Z_N(Y^A, Y^B) \not\in I(\alpha)$. The empirical power (solid lines) is the frequency with which the hypothesis is admitted among the $K$ simulations. The theoretical power (dashed lines) is the function $\psi_N(\Theta^A, \Theta^B)$, which only depends on $F^B_2$ since the other parameters are constant, computed with equation~\eqref{eq:theoretical_power}. }
\label{fig:comparison_power}
\end{figure}

\subsection{Motif frequencies}
\label{subsec:motifs}

A motif is a small-size subgraph. The frequencies of occurences of motifs (sometimes called network moments) are widely studied in network theory. Motifs frequencies have known asymptotic distribution under many generative models, so they can be used to analyze binary networks \citep{stark2001compound, picard2008assessing, reinert2010random, bickel2011method, bhattacharyya2015subsampling, levin2019bootstrapping, maugis2020testing, naulet2021bootstrap, ouadah2022motif}. Many probabilistic graph models also rely on motif frequencies, such as the Exponential Random Graph Model \citep{frank1986markov} or the \textit{dk}-random graphs \citep{orsini2015quantifying}. For many real networks, one can interpret the frequencies of certain motifs, see examples for transcriptional networks \citep{shen2002network}, protein networks \citep{prvzulj2004modeling}, social networks \citep{bearman2004chains}, evolutionary trait networks \citep{przytycka2006important}, ecological food webs \citep{bascompte2005simple, stouffer2007evidence}, ecological mutualistic networks \citep{baker2015species, simmons2019motifs}.

It naturally arises that frequencies of bipartite motifs of size $2 \times 2$ can be expressed as quadruplet $U$-statistics and can be integrated in our framework. If $Y$ is a binary adjacency matrix, then one can count the motifs using a kernel and obtain statistical guarantees. For example, the motif represented in Figure~\ref{fig:motif_example} can be counted with the kernel
\begin{equation*}
\begin{split}
    h_7(Y_{(i_1,i_2;j_1,j_2)}) =& \frac{1}{4} \bigg(Y_{i_1j_1} Y_{i_1j_2} Y_{i_2j_1} (1 - Y_{i_2j_2}) + Y_{i_1j_1} Y_{i_1j_2} Y_{i_2j_2} (1 - Y_{i_2j_1}) \\
    &+ Y_{i_1j_1} Y_{i_2j_1} Y_{i_2j_2} (1 - Y_{i_1j_2}) + Y_{i_1j_2}Y_{i_2j_1} Y_{i_2j_2} (1 - Y_{i_1j_1})\bigg).
\end{split}
\end{equation*}

\begin{figure}[!hbt]
\includegraphics[width=0.2\linewidth]{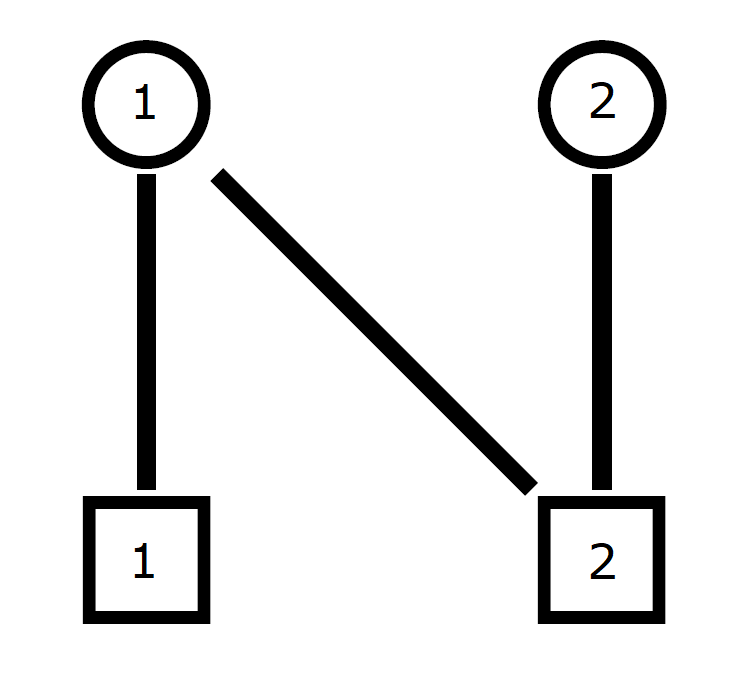}
\caption{Motif counted by $U^{h_7}_N$. The circles and the squares represent the two types of nodes of a bipartite network. Assuming that the circles correspond to the rows and the squares to the columns of the adjacency matrix, then the submatrix associated to this subgraph is $Y_{(1,2;1,2)} = \begin{pmatrix} 1 & 1 \\ 0 & 1 \end{pmatrix}$  (figure taken from~\citealp{ouadah2022motif}).}
\label{fig:motif_example}
\end{figure}

Theorem~\ref{th:gaussian_theorem} shows that the associated $U$-statistic $U^{h_7}_N$ converges to the theoretical frequency $T$ of this motif given the network model and it is asymptotically normal. Suppose $Y \sim \text{Bernoulli-BEDD}(\Theta)$, where $\Theta = (\lambda, f, g)$ are BEDD parameters, then
\begin{equation}
    \sqrt{\frac{N}{V^{h_7}}} \bigg(U^{h_7}_N - T(\Theta)\bigg) \xrightarrow[N \rightarrow \infty]{\mathcal{D}} \mathcal{N}(0, 1).
    \label{eq:convergence_motif_th}
\end{equation}
where following derivations given in Lemma~\ref{lem:motif_variance}, $T(\Theta) = \lambda^3 F_2 G_2 (1 - \lambda F_2 G_2)$ and 
\begin{equation*}
\begin{split}
    V^{h_7} &= \frac{4 \lambda^6}{c} G_2^2 \left[\lambda^2 F_4 F_2^2 G_2^2 - \lambda F_4 F_2 G_2 - \lambda F_3 F_2^2 G_2 + \frac{1}{2} F_3 F_2 + \frac{1}{4} F_4 + \frac{1}{4} F_2^3 \right] \\
    & + \frac{4 \lambda^6}{1-c} F_2^2 \left[\lambda^2 G_4 G_2^2 F_2^2 - \lambda G_4 G_2 F_2 - \lambda G_3 G_2^2 F_2 + \frac{1}{2} G_3 G_2 + \frac{1}{4} G_4 + \frac{1}{4} G_2^3 \right] \\
    & - \frac{4}{c(1-c)} \left(\lambda^3 F_2 G_2 (1 - \lambda F_2 G_2)\right)^2.
\end{split}
\end{equation*}

The quantities $\lambda$, $(F_k)_{k \ge 1}$ and $(G_k)_{k \ge 1}$ appearing in the expression of the asymptotic variance $V^{h_7}$ can be consistently estimated using $U$-statistics of larger subgraphs. For any $(p,q)$, define the kernel $h_{p,q}$ of submatrices $Y_{(i_1,...,i_{p};j_1,...,j_{q})}$ of size $p \times q$ as follows:
\begin{equation*}
    h_{p,q}(Y_{(i_1,...,i_{p};j_1,...,j_{q})}) = \prod_{u = 1}^{p} \prod_{v = 1}^{q} Y_{i_u j_v}.
\end{equation*}
Then the $U$-statistic associated to $h_{p,q}$ is
\begin{equation*}
    U^{h_{p,q}}_N = \binom{m_N}{p}^{-1} \binom{n_N}{q}^{-1} \sum_{1 \le i_1 < ... < i_p \le m_N} \sum_{1 \le j_1 < ... < j_q \le n_N} h_{p,q}(Y_{(i_1,...,i_{p};j_1,...,j_{q})}).
\end{equation*}

Lemma~\ref{lem:product_pq_kernel} states that $\mathbb{E}[h_{1,q}(Y_{(1;1,...,q)})] = \lambda^q F_q$ and $\mathbb{E}[h_{p,1}(Y_{(1,...,p;1)})] = \lambda^p G_p$. Since $Y$ is a RCE matrix, Kallenberg's law of large number (Lemma 12 of~\citealp{kallenberg1999multivariate}) applies to these $U$-statistics and $U^{h_{1,q}}_N \xrightarrow[N \rightarrow \infty]{a.s.} \lambda^q F_q$ and $U^{h_{p,1}}_N \xrightarrow[N \rightarrow \infty]{a.s.} \lambda^p G_p$.

Using these consistent estimators for $\lambda$, and the $(F_k)_{k \ge 1}$ and $(G_k)_{k \ge 1}$, we build an example of consistent estimator for $V^{h_7}$ 
\begin{equation*}
\begin{split}
    \widehat{V}_N &= \frac{4(U^{h_{2,1}}_N)^2}{c} \left[\frac{U^{h_{1,4}}_N (U^{h_{1,2}}_N)^2 (U^{h_{2,1}}_N)^2}{(U^{h_{1,1}}_N)^{8}} - \frac{U^{h_{1,4}}_N U^{h_{1,2}}_N U^{h_{2,1}}_N}{(U^{h_{1,1}}_N)^5} - \frac{U^{h_{1,3}}_N (U^{h_{1,2}}_N)^2 U^{h_{2,1}}_N}{(U^{h_{1,1}}_N)^6} \right. \\
    &\quad\quad\quad\quad\quad\quad \left. + \frac{1}{2} \frac{U^{h_{1,3}}_N U^{h_{1,2}}_N}{(U^{h_{1,1}}_N)^3} + \frac{1}{4} \frac{(U^{h_{1,4}}_N)^4}{(U^{h_{1,1}}_N)^2} + \frac{1}{4} \frac{(U^{h_{1,2}}_N)^3}{(U^{h_{1,1}}_N)^4} \right] \\
    &+ \frac{4(U^{h_{1,2}}_N)^2}{1-c} \left[\frac{U^{h_{4,1}}_N (U^{h_{2,1}}_N)^2 (U^{h_{1,2}}_N)^2}{(U^{h_{1,1}}_N)^{8}} - \frac{U^{h_{4,1}}_N U^{h_{2,1}}_N U^{h_{1,2}}_N}{(U^{h_{1,1}}_N)^5} - \frac{U^{h_{3,1}}_N (U^{h_{2,1}}_N)^2 U^{h_{1,2}}_N}{(U^{h_{1,1}}_N)^6} \right. \\
    &\quad\quad\quad\quad\quad\quad \left. + \frac{1}{2} \frac{U^{h_{3,1}}_N U^{h_{2,1}}_N}{(U^{h_{1,1}}_N)^3} + \frac{1}{4} \frac{(U^{h_{4,1}}_N)^4}{(U^{h_{1,1}}_N)^2} + \frac{1}{4} \frac{(U^{h_{2,1}}_N)^3}{(U^{h_{1,1}}_N)^4} \right] \\
    &- \frac{4}{c(1-c)} (U^{h_7}_N)^2.
\end{split}
\end{equation*}
This expression may seem complex at first, however it is computationally simple as one only needs to compute $U^{h_{p,1}}_N$ for $1 \le p \le 4$ and $U^{h_{1,q}}_N$ for $1 \le q \le 4$ which can be easily done (see Appendix~\ref{app:matrix_operation}).

From Slutsky's theorem, it follows that 
\begin{equation}
    \sqrt{\frac{N}{\widehat{V}_N}} \bigg(U^{h_7}_N - T(\Theta)\bigg) \xrightarrow[N \rightarrow \infty]{\mathcal{D}} \mathcal{N}(0, 1).
    \label{eq:convergence_motif}
\end{equation}
This result can be used to build asymptotic confidence intervals for the motif frequency $T(\Theta)$, like in the previous examples of Sections~\ref{subsec:est} and~\ref{subsec:comp}.

\paragraph{Simulations} We simulate networks with the Bernoulli-BEDD model, with power law functions for $f$ and $g$, similar to which of previous examples. For $\alpha_f$ and $\alpha_g$ in $[0, +\infty[$,  $f(u) = (\alpha_f + 1) u^{\alpha_f}$ and $g(v) = (\alpha_g + 1) v^{\alpha_g}$. $\alpha_f$ and $\alpha_g$ can be used to set $F_2$ and $G_2$. $\alpha_f$ and $\alpha_g$ also determine the maximum value for $\lambda$, as we should have $\lambda \le \lambda_M = (f(1)g(1))^{-1} = (\alpha_f +1)^{-1}(\alpha_g+1)^{-1}$. The $c$ constant remains at $0.5$. 

Figure~\ref{fig:motifs_rate} represents the frequency with which the 2 confidence intervals built from equations~\eqref{eq:convergence_motif_th} and~\eqref{eq:convergence_motif} contain the true value of the target motif frequency $T(\Theta)$. We see that as $N$ grows larger than $250$, the frequency becomes very close to $0.95$, although the variance is still underestimated until $N \approx 2000$. In contrast to the example of Section~\ref{subsec:est}, the frequencies for $N$ smaller than $16$ are very low ($0.5$ and lower). This is an expected result as the estimator $U^{h_7}_N$ counts the motifs contained in the networks. The maximum number of motifs in the network is $M_N = \binom{m_N}{2} \binom{n_N}{2}$. For a fixed $N$, $U^{h_7}_N$ can only take discrete values in $\left(k M_N^{-1} \right)_{0 \le k \le M_N}$. The support of $U^{h_7}_N$ is more and more restricted as $N$ becomes smaller, which makes the empirical distribution of $U^{h_7}_N$ more dissimilar from a Gaussian distribution. 

This is also reflected in Figure~\ref{fig:motifs_dist} as the discrete support still appears very clearly in the yet smoothed distribution density of $U^{h_7}_N$ for $N=8$ and $N=16$. Nevertheless, we see that the empirical distribution converges quickly to a Gaussian distribution, even faster than in the $F_2$ estimation example of Section~\ref{subsec:est}, as the Kolmogorov-Smirnov statistics are smaller if $N$ is larger than $32$.

\begin{figure}[!hbt]
\includegraphics[width=0.9\linewidth]{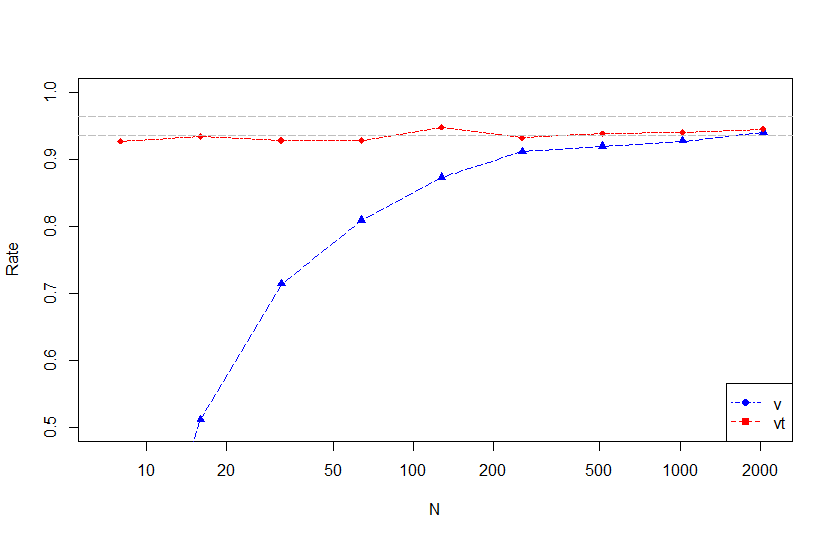}
\caption{Motifs count: Frequency of the confidence intervals that contain the theoretical value $T(\Theta)$ for different values of $N$ (on a logarithmic scale). For each $N \in \{8,16,32,64,128,256,512,1024,2048\}$, we simulate $K = 1000$ networks with $F_2=2$, $G_2=2$, $\lambda=0.9 \lambda_M$. For each simulated network, we determine the motif frequency with the estimator $U^{h_7}_N$ and at level $\alpha = 0.95$, we build the asymptotic confidence intervals from the weak convergence results: [vt] built from~\eqref{eq:convergence_motif_th} (true value of $V^{h_7}$) and [v] built from~\eqref{eq:convergence_motif} (estimated value of $V^{h_7}$ by $\widehat{V}_N$). The horizontal dashed lines represent the confidence interval at level $0.95$ of the frequency $Z = X/K$, if $X$ follows the binomial distribution with parameters $K$ and $\alpha$.}
\label{fig:motifs_rate}
\end{figure}

\begin{figure}[!hbt]
\includegraphics[width=\linewidth]{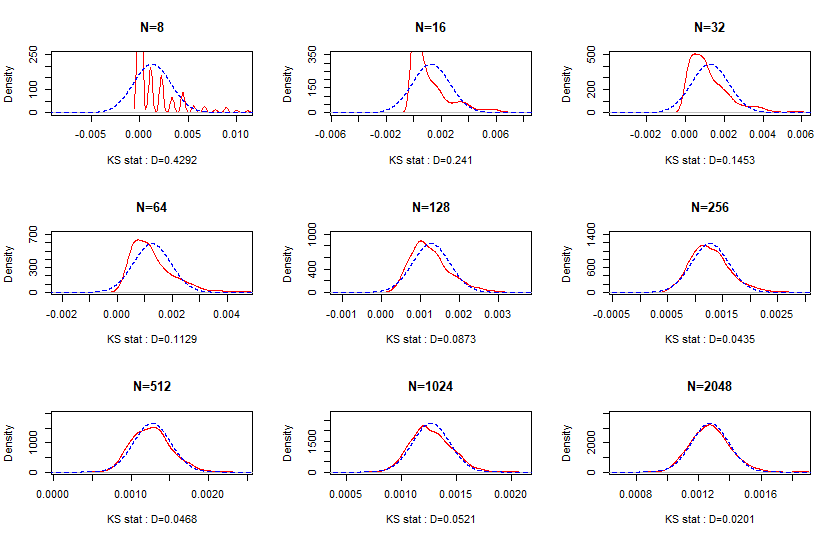}
\caption{Motifs count: Distribution of $U^{h_7}_N$ for different values of $N$. For each $N \in \{8,16,32,64,128,256,512,1024,2048\}$, we simulate $K = 1000$ networks with $F_2=2$, $G_2=2$, $\lambda=0.9 \lambda_M$. For each simulated network, we determine the motif frequency with the estimator $U^{h_7}_N$. The empirical distributions (solid red lines) are interpolated using the density() function from base R stats package. The dashed curves in blue correspond to the normal distribution densities with mean $T(\Theta)$ and variance $V^{h_7}/N$. Under each plot, the value of the Kolmogorov-Smirnov test statistic $D$ between the empirical distribution of $U^{h_7}_N$ and the normal distribution with mean $T(\Theta)$ and variance $V^{h_7}/N$ is given. $D=\sup_x | F_{emp}(x) - F(x) |$ where $F_{emp}$ is the empirical c.d.f. of $U^{h_7}_N$ and $F(x)$ the c.d.f. of the normal distribution with mean $T(\Theta)$ and variance $V^{h_7}/N$.}
\label{fig:motifs_dist}
\end{figure}

\section{Discussion \label{sec:disc}}

In this paper, we have proven a weak convergence result for quadruplet $U$-statistics over RCE matrices, using a backward martingale approach. We use the Aldous-Hoover representation of RCE matrices and a Hewitt-Savage type argument to extend this result and obtain a CLT in the dissociated case. Using this CLT, we provide a general framework to perform statistical inference on bipartite exchangeable networks through several examples.

Indeed, $U$-statistics can be used to build estimators. The advantage of taking quadruplets is to define functions over several interactions of the same row or column. This allows us to extract information on the row and column distribution. The CLT then guarantees an asymptotic normality result of the estimators, where the only unknown is their asymptotic variances, which have to be estimated then plugged in with Slutsky's theorem.

\paragraph{Computational cost} One interesting feature of the kernels chosen in Section~\ref{sec:app} is their computational simplicity. This simplicity comes naturally when considering quadruplet kernels consisting of small products. Indeed, if we denote $Y_N := (Y_{ij})_{1 \le i \le m_N, 1 \le j \le n_N}$, one can write $U^{h_1}_N$ and $U^{h_2}_N$ used in the $F_2$ estimation example (Sections~\ref{subsec:est} and~\ref{subsec:comp}) as
\begin{equation}
\begin{split}
    U^{h_1}_{N} &= \frac{1}{n_N m_N (m_N-1)} \left[ |Y_N^TY_N|_1 - \Trace(Y_N^TY_N)\right] \\
    U^{h_2}_{N} &= \frac{1}{m_N(m_N-1)n_N(n_N-1)} \left[ (|Y_N|_1)^2 - |Y_N^TY_N|_1 + \Trace(Y_N^TY_N)\right. \\
    & \quad\left.- |Y_NY_N^T|_1 + \Trace(Y_NY_N^T) - |Y^{\odot2}_N|_1 \right].
\end{split}
    \label{eq:ustat_matrix_operation}
\end{equation}
where $\Trace$ is the trace operator. We see that $U^{h_1}_N$ and $U^{h_2}_N$ can be computed using only basic operations on matrices, which are optimized in most computing software. This can also be said for all the other $U$-statistics used in this example, and by extension for the estimators $\widehat{\theta}_N$ and $\widehat{V}_N$. Expressions for the remaining $U$-statistics are given in Appendix~\ref{app:matrix_operation}.

In the motif count example (Section~\ref{subsec:motifs}), the $U$-statistic $U^{h_7}_N$ can also be easily computed, despite the $h_{p,q}$ being kernels over submatrices larger than a quadruplet (at least one dimension greater than 2). The $U^{h_{p,q}}_N$ $U$-statistics normally involve more complex summations but fortunately, we show in Appendix~\ref{app:matrix_operation} that simpler expressions can be found for $p=1$ or $q=1$.

\paragraph{Other models: graphons} We have seen that for a class of BEDD models (those falling under Assumption~\ref{ass:distfunc}), the quadruplet $U$-statistics are particularly interesting because a single quadruplet contains all the information of the model. The Bernoulli-BEDD used in Section~\ref{subsec:motifs} is an example of model where this assumption does not hold. Still, one can build estimators, apply Theorem~\ref{th:gaussian_theorem} and perform statistical inference on this model, like in Section~\ref{subsec:motifs}. In fact, the only conditions on the model are that it should be RCE and dissociated, i.e. it can be written as a bipartite W-graph model (see Section~\ref{subsec:aldous_hoover}). For example, given the W-graph model $Y_{ij}~|~\xi_i, \eta_j \sim \mathcal{P}(\lambda w(\xi_i, \eta_j))$ with $\int \int w = 1$, one could have tested if it is of product form, i.e. if $w$ can be written as $w(u,v) = f(u) g(v)$ (as in the BEDD models). An appropriate kernel for this test would be
\begin{equation*}
\begin{split}
    h(Y_{(i_1, i_2; j_1, j_2)}) =& \frac{1}{4}Y_{i_1j_1} Y_{i_2j_2}(Y_{i_1j_1} + Y_{i_2j_2} - Y_{i_1j_2} - Y_{i_1j_2} - 2) \\
    &+ \frac{1}{4}Y_{i_1j_2} Y_{i_2j_1} (Y_{i_1j_2} + Y_{i_2j_1} - Y_{i_1j_1} - Y_{i_2j_2} - 2)
\end{split}
\end{equation*}
as $\mathbb{E}[h(Y_{(i_1, i_2; j_1, j_2)})] = \int \int w(u,v)(w(u,v) - f(u)g(v)) du dv$ with $f(u) = \int w(u,v) dv$ and $g(v) = \int w(u,v) du$ and should be equal to 0 if the hypothesis is true.

\paragraph{Extension to larger subgraphs} It is legitimate to wonder if one can extend our framework to $U$-statistics over submatrices of size different from $2 \times 2$, for example $Y_{(i_1,...,i_{p};j_1,...,j_{q})}$ of size $p \times q$, i.e. 
\begin{equation*}
  U^h_N = \left[\binom{m_N}{p}\binom{n_N}{q}\right]^{-1} \sum_{1\le i_1<...<i_{p}\le m_N} \sum_{1\le j_1<...<j_{q}\le n_N} h(Y_{(i_1,...,i_{p};j_1,...,j_{q})}).
\end{equation*} 
Such generalization opens up many possibilities by building new estimators. 

First, as seen in Section~\ref{subsec:motifs}, in the Bernoulli-BEDD model, the quantities $F_k$ and $G_k$ cannot be retrieved by a quadruplet for $k \ge 3$, but $F_k$ can be retrieved with subgraphs of size $1 \times k$ and $G_k$ with subgraphs of size $k \times 1$. Second, our framework can also be used to count motifs of size larger that $2 \times 2$, since the maximum size of the motifs is determined by the size of the kernel. Finally, in the row heterogeneity example where we used formula~\eqref{eq:convergence_lambda2f2} to derive an asymptotic confidence interval for $F_2$, we notice that one could have estimated the term $\lambda^4 F_4$ appearing in $V$ with a kernel over submatrices of size $1 \times 4$ such as $h(Y_{(i_1; j_1, j_2,j_3,j_4)}) = Y_{i_1j_1}Y_{i_1j_2}Y_{i_1j_3}Y_{i_1j_4}$ and $\mathbb{E}[h(Y_{(i_1; j_1, j_2,j_3,j_4)})] = \lambda^4 F_4$. 

Actually, our theorem can be extended to $U$-statistics over larger subgraphs under similar conditions. All the steps of our proof can be adapted to $U$-statistics of larger subgraphs. These $U$-statistics are indeed backward martingales and the equivalent of Propositions~\ref{prop:asymptotic_variance} and~\ref{prop:lindeberg_condition} require more calculus. As a consequence, the asymptotic variance also has a different expression. On the one hand, such an extension would allow more flexibility in the choice of the kernel, hence the ability to build more complex estimators that are asymptotically normal. On the other hand, in practice, the computation of such $U$-statistics may also be more complex and computationally demanding, whereas simple functions on quadruplets can easily be expressed with matrix operations. 

\paragraph{Degeneracy}
Degenerate cases are of interest because they are rather common. The degeneracy depends on the kernels and the distribution of $Y$. As an example, assume that one is interested to test $\mathcal{H}_0 : f \equiv 1$ vs. $\mathcal{H}_1 : f \not\equiv 1$ for a Poisson-BEDD model. Under $\mathcal{H}_0$, $F_2 = 1$ whereas under $\mathcal{H}_1$, $F_2 > 1$. We plan to use the same estimator of $F_2$ than in Section~\ref{subsec:est}. Equation~\eqref{eq:convergence_vt} of the second method could be also obtained applying Theorem~\ref{th:gaussian_theorem} to the kernel $h = h_1 - F_2 h_2$. From equation~\eqref{eq:variance_vdt}, we see that from  that the asymptotic variance $V^t = V^\delta = 0$ under $\mathcal{H}_0$, since $F_2 = F_3 = F_4 = 1$. Thus, under $\mathcal{H}_0$, this is a degenerate case and Theorem~\ref{th:gaussian_theorem} does not apply and the limiting distribution of a test statistic using this estimator is unidentified. 

Theorems~\ref{th:mixture_theorem} and~\ref{th:gaussian_theorem} avoid degeneracy by deliberately excluding the case where $V = 0$ almost surely. However, these theorems would remain valid in degenerate cases. Indeed, if $V = 0$ almost surely, then Theorems~\ref{th:mixture_theorem} and~\ref{th:gaussian_theorem} would yield $\sqrt{N}(U^h_N - U^h_\infty) \xrightarrow[n \rightarrow \infty]{\mathbb{P}} 0$. 

This can be proven to be true. First, notice that from Corollary~\ref{cor:asymptotic_variance}, $\mathbb{V}[U^h_N|\mathcal{F}_\infty] = V/N + o\left(1/N\right)$, therefore
\begin{equation*}
  \begin{split}
    \mathbb{V}[\sqrt{N}(U^h_N - U^h_\infty)] &= N \mathbb{E}[\mathbb{V}[U^h_N - U^h_\infty|\mathcal{F}_\infty]] + N \mathbb{V}[\mathbb{E}[U^h_N - U^h_\infty | \mathcal{F}_\infty]] \\
    & = N \mathbb{E}[\mathbb{V}[U^h_N|\mathcal{F}_\infty]] \\
    & = \mathbb{E}[V] + o\left(1\right)
  \end{split}
\end{equation*}
where we denote $\mathbb{V}[X]$ the variance of a random variable $X$ and we used the fact that $\Cov(U^h_N, U^h_\infty) = \Cov(U^h_N, \mathbb{E}[U^h_N|\mathcal{F}_\infty]) = \mathbb{V}[\mathbb{E}[U^h_N|\mathcal{F}_\infty]] = \mathbb{V}[U^h_\infty]$. If $V = 0$ almost surely, then $\mathbb{V}[\sqrt{N}(U^h_N - U^h_\infty)] = o(1)$. By Chebyshev's inequality, we get $\sqrt{N}(U^h_N - U^h_\infty) \xrightarrow[n \rightarrow \infty]{\mathbb{P}} 0$. \cite{austern2022limit} also comes to this conclusion if $\eta^2 = 0$ in their Theorem 17. However, they do not explicitly discuss the implications of this case.

In fact, if $V = 0$ a.s., the $U$-statistic is degenerate and the rate of convergence of $U^h_N - U^h_\infty$ is faster than $\sqrt{N}$. This behaviour is similar to regular $U$-statistic of i.i.d. variables as described by \cite{lee1990ustatistics} or \cite{arcones1992bootstrap}. In the proof of Lemma~\ref{lem:covariance_formula}, one could go further in the derivation of the covariance and developed more the content of the $o(1/N)$ term. This would have yielded a decomposition of the form:
\begin{equation*}
    \mathbb{V}[U^h_N | \mathcal{F}_\infty] = \frac{V^{(1)}}{N} + \frac{V^{(2)}}{N^2} + \frac{V^{(3)}}{N^3} + \frac{V^{(4)}}{N^4} + o\left(\frac{1}{N^4}\right),
\end{equation*}
where $V^{(1)} = V$ of Theorem~\ref{th:mixture_theorem} and $V^{(2)}$, $V^{(3)}$ and $V^{(4)}$ are non-negative $\mathcal{F}_\infty$-measurable random variables. The derivation of closed-form expressions for $V^{(2)}$, $V^{(3)}$ and $V^{(4)}$ is possible but out of scope.

If $V = V^{(1)} = 0$ a.s. but $\mathbb{P}(V^{(2)} > 0) > 0$, then we say that the $U$-statistic is degenerate of order 1 and the above formula indicates that the right normalization is $N(U^h_N - U^h_\infty)$ instead of $\sqrt{N}(U^h_N - U^h_\infty)$. We can generalize this intuition as follows: for $2 \le d \le 4$, if $V^{(d')} = 0$ a.s. for all $1 \le d' \le d-1$ and $\mathbb{P}(V^{(d)} > 0) > 0$, then we say that the $U$-statistic is degenerate of order $d-1$ and $N^{\frac{d}{2}}(U^h_N - U^h_\infty)$ converges in distribution to some random variable. However, the asymptotic distribution for degenerate $U$-statistics is not trivial in general. Even for $U$-statistics of i.i.d. variables, the limit is very dependent of the kernel $h$ and it involves combinations of products of independent Gaussian variables in a form that is not always tractable \citep{rubin1980asymptotic, lee1990ustatistics}.

Further work might be carried out to investigate the degenerate cases. One lead is to derive some Hoeffding-type decomposition (see for example \citealp{chiang2021inference} for jointly and separately exchangeable arrays, \citealp{wang2015u} for kernels of size 2) but for quadruplet kernels taken on RCE matrices. Hoeffding-type decompositions can help identify the limiting distribution of degenerate $U$-statistics, as shown by \cite{lee1990ustatistics} and \cite{arcones1992bootstrap} in the i.i.d. case.

\paragraph{Berry-Esseen} Further studies might be carried out to investigate the rate of convergence of $\sqrt{N}(U^h_{N}-U^h_\infty)$ to its limiting distribution. For specific applications, one can for now rely on simulations to assess how quickly it converges. A possible direction to find theoretical guarantees is the derivation of a Berry-Esseen-type bound, similar to \cite{austern2022limit} in their limit theorems.

\paragraph{Choice of the optimal kernels} It is simple to design new $U$-statistics using various kernels. So when it comes to estimate some particular parameter, one may have the choice between several kernels. Even though they have the same expectation, not only the asymptotic variances might differ, but their rates of convergence to their asymptotic distributions can also vary. In addition, one should remain careful that the derived $U$-statistics are easily computable. In conclusion, each kernel does not necessarily lead to the same statistical and computational guarantees. The art of designing the best estimation procedures or statistical tests using our approach relies on finding the optimal kernels, depending on the situation.

%%%%%%%%%%%%%%%%%%%%%%%%%%%%%%%%%%%%%%%%%%%%%%
%% Multiple Appendixes:                     %%
%%%%%%%%%%%%%%%%%%%%%%%%%%%%%%%%%%%%%%%%%%%%%%

\begin{appendix}

\section{Properties of $m_N$ and $n_N$ \label{app:mn}}

In this appendix, we provide the proofs for Proposition~\ref{prop:sequences} and further properties of the sequences $m_N$ and $n_N$ defined as $m_N = 2 + \lfloor c(N+1) \rfloor$ and $n_N = 2 + \lfloor (1-c)(N+1) \rfloor$ for all $N \geq 1$, where $c$ is an irrational number  (Definition \ref{def:mn}).

\begin{proof}[Proof of Proposition~\ref{prop:sequences}]
    The second result stems from the fact that 
    \begin{equation*}
        m_N + n_N = 4 + \lfloor c(N+1) \rfloor + \lfloor (1-c)(N+1) \rfloor = 4+\lfloor c(N+1) \rfloor + \lfloor -c(N+1) \rfloor + N+1
    \end{equation*}
    and $\lfloor c(N+1) \rfloor + \lfloor -c(N+1) \rfloor = -1$ because $c(N+1)$ is not an integer since $c$ is irrational. Then, the first result simply follows as
    \begin{equation*}
        \frac{m_N}{m_N + n_N} = \frac{\lfloor c(N+1) \rfloor + 2}{N + 4} \underset{N}{\sim} \frac{c(N+1) + 2}{N + 4} \underset{N}{\sim} \frac{cN}{N},
    \end{equation*}
    where $\underset{N}{\sim}$ denotes the asymptotic equivalence when $N$ grows to infinity, i.e. $a_N \underset{N}{\sim} b_N$ if and only if $a_N / b_N \underset{N \rightarrow \infty}{\longrightarrow} 1$.
\end{proof}

\begin{proof}[Proof of Corollary~\ref{cor:partition}]
    As $m_N$ and $n_N$ are non-decreasing, the corollary is a direct consequence of $m_N + n_N = 4 + N$, because then $m_{N+1} + n_{N+1} = 4 + N + 1 = m_N + n_N + 1$.
\end{proof}

The following definition and proposition pertain to the partition of $\mathbb{N}^*$ which will be helpful in later proofs.

\begin{definition}
    We define $\mathcal{B}_c$ and $\mathcal{B}_{1-c}$ two complementary subsets of $\mathbb{N}^*$ as
    \begin{equation*}
        \mathcal{B}_c = \left\{ N \in \mathbb{N}^* : m_N = m_{N-1} + 1 \right\} \text{~and~~} \mathcal{B}_{1-c} = \left\{ N \in \mathbb{N}^* : n_N = n_{N-1} + 1 \right\}.
    \end{equation*}
    \label{def:partition_sets}
\end{definition}

\begin{proposition}
    Set $\kappa_c(m) := \big\lfloor \frac{m-2}{c} \big\rfloor$ and $\kappa_{1-c}(n) := \big\lfloor \frac{n-2}{1-c} \big\rfloor$. If $N \in \mathcal{B}_c$, then $N = \kappa_c(m_N)$. Similarly, if $N \in \mathcal{B}_{1-c}$, then $N = \kappa_{1-c}(n_N)$.
    \label{prop:form_k_m}
\end{proposition}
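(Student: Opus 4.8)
The plan is to translate membership in $\mathcal{B}_c$ into a chain of inequalities that pins $k/c$ strictly between two consecutive integers, where $k := m_N - 2 = \lfloor c(N+1) \rfloor$. By Definition~\ref{def:partition_sets} together with $m_N = 2 + \lfloor c(N+1) \rfloor$ and $m_{N-1} = 2 + \lfloor cN \rfloor$, the condition $N \in \mathcal{B}_c$ is equivalent to $\lfloor c(N+1) \rfloor = \lfloor cN \rfloor + 1$ (the floor difference is $0$ or $1$ since $c < 1$). So I would first record this reformulation, which says precisely that $\lfloor cN \rfloor = k - 1$.

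From here the goal $N = \kappa_c(m_N) = \lfloor k/c \rfloor$ amounts to showing $N < k/c < N+1$, that is, $cN < k < c(N+1)$. I would read the left inequality off $\lfloor cN \rfloor = k-1$, which gives $k-1 \le cN < k$ and in particular $cN < k$. For the right inequality, the definition $k = \lfloor c(N+1) \rfloor$ gives only $k \le c(N+1)$; the crucial move is to upgrade this to a strict inequality, and this is exactly where irrationality enters: since $c$ is irrational and $N+1 \ge 1$, the number $c(N+1)$ is irrational, hence not an integer, so $k \le c(N+1)$ forces $k < c(N+1)$. Combining the two bounds gives $cN < k < c(N+1)$, and dividing by $c > 0$ yields $N < k/c < N+1$, whence $\lfloor k/c \rfloor = N$, which is the claim.

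The symmetric statement for $N \in \mathcal{B}_{1-c}$ follows by the identical argument with $1-c$ substituted for $c$ and $\kappa_{1-c}$ for $\kappa_c$, using that $1-c$ is irrational whenever $c$ is. The main (and essentially only) obstacle is the strictness of the upper bound $k < c(N+1)$: without the irrationality hypothesis one could have $k = c(N+1)$, in which case $k/c = N+1$ and the floor would equal $N+1$ rather than $N$. Everything else is routine manipulation of floor inequalities.
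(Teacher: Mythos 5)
Your proof is correct and takes essentially the same route as the paper's: both arguments pin $m_N-2$ strictly between $cN$ and $c(N+1)$ — the lower bound coming from the membership condition $\lfloor c(N+1)\rfloor = \lfloor cN\rfloor + 1$, the upper bound made strict by the irrationality of $c$ — and then divide by $c$ to get $N < (m_N-2)/c < N+1$. The paper merely packages this as a single chain of inequalities, and your identification of irrationality as the one non-routine ingredient matches its use there exactly.
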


\begin{proof}
  Remember that $c$ is an irrational number, so if $N \in \mathcal{B}_c$, then
  \begin{equation*}
        cN + 2 < \lfloor cN \rfloor + 3 = m_{N-1} + 1 = m_N = \lfloor c(N+1) \rfloor + 2 < c(N+1) + 2,
    \end{equation*}
    which means that $\frac{m_N-2}{c} -1 < N < \frac{m_N-2}{c}$, thus $N = \big\lfloor \frac{m_N-2}{c} \big\rfloor$.
\end{proof}

\section{Backward martingales \label{app:backward}}

In this appendix, we recall the definition of decreasing filtrations, backward martingales and their convergence theorem. The proof of Theorem~\ref{th:martingale_convergence} can be found in~\cite{doob1953stochastic}, Section 7, Theorem 4.2.

\begin{definition}
    A decreasing filtration is a decreasing sequence of $\sigma$-fields $\mathcal{F} = (\mathcal{F}_n)_{n \ge 1}$, i.e. such that for all $n \ge 1$, $\mathcal{F}_{n+1} \subset \mathcal{F}_n$.
\end{definition}

\begin{definition}
    Let $\mathcal{F} = (\mathcal{F}_n)_{n \ge 1}$ be a decreasing filtration and $M = (M_n)_{n \ge 1}$ a sequence of integrable random variables adapted to $\mathcal{F}$. $(M_n, \mathcal{F}_n)_{n \ge 1}$ is a backward martingale if and only if for all $n \ge 1$, $\mathbb{E}[M_n |\mathcal{F}_{n+1}] = M_{n+1}$.
\end{definition}

\begin{theorem}
    Let $(M_n, \mathcal{F}_n)_{n \ge 1}$ be a backward martingale. Then, $(M_n)_{n \ge 1}$ is uniformly integrable, and,  denoting $M_\infty = \mathbb{E}[M_1 | \mathcal{F}_\infty]$ where $\mathcal{F}_\infty = \bigcap_{n = 1}^{\infty} \mathcal{F}_n$, we have
    \begin{equation*}
        M_n \xrightarrow[n \rightarrow \infty]{a.s., L_1} M_\infty.
    \end{equation*}
    Furthermore, if $(M_n)_{n \ge 1}$ is square-integrable, then $M_n \xrightarrow[n \rightarrow \infty]{L_2} M_\infty$.
    \label{th:martingale_convergence}
\end{theorem}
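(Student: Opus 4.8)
The plan is to reduce the statement to the classical forward martingale convergence theorem by a time-reversal argument, after first recording the key representation $M_n = \mathbb{E}[M_1 \mid \mathcal{F}_n]$. This identity follows by a short induction using the tower property: since $\mathcal{F}_{n+1} \subseteq \mathcal{F}_n$, one has $\mathbb{E}[M_1 \mid \mathcal{F}_{n+1}] = \mathbb{E}[\mathbb{E}[M_1 \mid \mathcal{F}_n] \mid \mathcal{F}_{n+1}] = \mathbb{E}[M_n \mid \mathcal{F}_{n+1}] = M_{n+1}$, the base case being the definition. Once this is in hand, uniform integrability is immediate: for any sub-$\sigma$-algebra $\mathcal{G}$ and threshold $K$, conditional Jensen gives $|\mathbb{E}[M_1\mid\mathcal{G}]| \le \mathbb{E}[|M_1|\mid\mathcal{G}]$, so multiplying by the $\mathcal{G}$-measurable indicator of $A := \{|\mathbb{E}[M_1\mid\mathcal{G}]|>K\}$ yields $\mathbb{E}[|\mathbb{E}[M_1\mid\mathcal{G}]|\,\mathds{1}_A] \le \mathbb{E}[|M_1|\,\mathds{1}_A]$; since $\mathbb{P}(A) \le \mathbb{E}[|M_1|]/K$ by Markov, the uniform integrability of the single variable $M_1$ transfers to the whole family $(M_n)$.

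For the almost sure convergence I would fix $N$ and reverse time: setting $\mathcal{G}_k := \mathcal{F}_{N+1-k}$ and $L_k := M_{N+1-k}$ for $1 \le k \le N$ produces an increasing filtration $\mathcal{G}_1 \subseteq \cdots \subseteq \mathcal{G}_N$ and a genuine forward martingale $(L_k, \mathcal{G}_k)_{k=1}^N$, because $\mathbb{E}[L_{k+1}\mid\mathcal{G}_k] = \mathbb{E}[M_{N-k}\mid\mathcal{F}_{N+1-k}] = M_{N+1-k} = L_k$. Doob's upcrossing inequality applied to this finite forward martingale bounds the expected number of upcrossings of any interval $[a,b]$ by $\mathbb{E}[(M_1-a)^+]/(b-a)$, a bound uniform in $N$ since $M_1$ is fixed and integrable. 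Letting $N \to \infty$ and ranging over all rational $a<b$ shows that the full sequence $(M_n)$ has finitely many upcrossings of every rational interval almost surely, hence $\liminf_n M_n = \limsup_n M_n$ and $M_n$ converges a.s. to a limit; being the a.s. limit of $M_m, M_{m+1}, \dots$ for every $m$, this limit is $\mathcal{F}_m$-measurable for every $m$, hence $\mathcal{F}_\infty$-measurable. Combining a.s. convergence with the uniform integrability already established yields $L_1$ convergence by Vitali's theorem.

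To identify the limit I would test against an arbitrary $A \in \mathcal{F}_\infty \subseteq \mathcal{F}_n$: the representation gives $\mathbb{E}[M_n \mathds{1}_A] = \mathbb{E}[M_1 \mathds{1}_A]$ for every $n$, and passing to the $L_1$ limit gives $\mathbb{E}[M_\infty \mathds{1}_A] = \mathbb{E}[M_1 \mathds{1}_A]$; together with $\mathcal{F}_\infty$-measurability this is exactly the characterisation of $M_\infty = \mathbb{E}[M_1 \mid \mathcal{F}_\infty]$. Finally, in the square-integrable case I would invoke the Hilbert-space picture: $\mathbb{E}[\cdot \mid \mathcal{F}_n]$ is the orthogonal projection of $L^2$ onto the closed subspace $L^2(\mathcal{F}_n)$, these subspaces decrease to their intersection $L^2(\mathcal{F}_\infty)$, and strong convergence of orthogonal projections onto a decreasing family of closed subspaces to the projection onto their intersection gives $M_n = \mathbb{E}[M_1\mid\mathcal{F}_n] \to \mathbb{E}[M_1\mid\mathcal{F}_\infty] = M_\infty$ in $L^2$; equivalently one checks $\sup_n \mathbb{E}[M_n^2] \le \mathbb{E}[M_1^2]$ by conditional Jensen, so $(M_n^2)$ is uniformly integrable and $L^2$ convergence follows from the a.s. convergence. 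The main obstacle is the almost sure convergence step: everything hinges on correctly reversing time to recover a forward martingale so that Doob's upcrossing inequality applies, and on keeping the uniform-in-$N$ control of the upcrossing bound; the remaining steps are routine consequences of this together with standard conditional-expectation estimates.
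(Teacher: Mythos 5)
The paper itself offers no proof of this statement: Appendix~B merely recalls it as the classical backward (reverse) martingale convergence theorem, so there is no internal argument to compare yours against. Your proposal is the standard textbook proof (essentially the one in Durrett or Williams): the representation $M_n=\mathbb{E}[M_1\mid\mathcal{F}_n]$ via the tower property, uniform integrability because conditional expectations of a single integrable variable form a uniformly integrable family, time reversal over a fixed horizon $N$ to obtain a genuine forward martingale so that Doob's upcrossing inequality applies with the bound $\mathbb{E}[(M_1-a)^+]/(b-a)$ uniform in $N$, Vitali to upgrade a.s.\ convergence to $L_1$, and identification of the limit by testing against $A\in\mathcal{F}_\infty$. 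These steps are all correct; what your write-up buys is self-containedness, at the cost of a page of standard material the paper chose to cite implicitly.

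One aside is wrong as stated: in the $L^2$ step you claim that $\sup_n\mathbb{E}[M_n^2]\le\mathbb{E}[M_1^2]$ implies $(M_n^2)_{n\ge1}$ is uniformly integrable. Boundedness in $L^1$ of the family of squares does not give their uniform integrability (it gives uniform integrability of $(M_n)$, by de la Vall\'ee Poussin, not of $(M_n^2)$). The correct elementary route is the pointwise domination $M_n^2\le\mathbb{E}[M_1^2\mid\mathcal{F}_n]$ from conditional Jensen: the right-hand side is a family of conditional expectations of the fixed integrable variable $M_1^2$, hence uniformly integrable by the very argument you used for $(M_n)$, domination by a nonnegative uniformly integrable family transfers uniform integrability, and then a.s.\ convergence gives $\mathbb{E}[(M_n-M_\infty)^2]\to0$. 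Since your primary argument for the $L^2$ claim --- strong convergence of orthogonal projections onto the decreasing subspaces $L^2(\mathcal{F}_n)$, whose intersection is $L^2(\mathcal{F}_\infty)$ --- is correct and complete, this slip does not invalidate the proof. Two small points worth making explicit: the a.s.\ limit must be shown finite before Vitali is invoked (Fatou together with $\mathbb{E}|M_n|\le\mathbb{E}|M_1|$ does it), and the upcrossings you control are those of the time-reversed finite sequences; their finiteness uniformly in $N$ still rules out $\liminf_n M_n<a<b<\limsup_n M_n$ for rational $a<b$, so the convergence conclusion stands.
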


\section{Square-integrable backward martingale \label{app:square}}

In this appendix, we prove Proposition~\ref{prop:martingale}, which states that $U^h_N$ is a square-integrable backward martingale.

\begin{proposition}
    Let $Y$ be a RCE matrix. Let $h$ be a quadruplet kernel such that $\mathbb{E}[h(Y_{(1,2;1,2)})^2] < \infty$. Let $\mathcal{F}_N = \sigma\big((U^h_{k,l}, k \ge m_N, l \ge n_N)\big)$ and $\mathcal{F}_\infty = \bigcap_{N = 1}^{\infty} \mathcal{F}_N$. Set $U^h_\infty := \mathbb{E}[h(Y_{(1,2;1,2)})|\mathcal{F}_\infty]$. Then $(U^h_N, \mathcal{F}_N)_{N \ge 1}$ is a square-integrable backward martingale and $U^h_N \xrightarrow[N \rightarrow \infty]{a.s., L_2} U^h_\infty = \mathbb{E}[h(Y_{(1,2;1,2)}) | \mathcal{F}_\infty]$.
    \label{prop:martingale}
\end{proposition}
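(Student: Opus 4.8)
The plan is to reduce the whole statement to the single representation identity
\begin{equation*}
    U^h_N = \mathbb{E}\big[h(Y_{\{1,2;1,2\}}) \mid \mathcal{F}_N\big],
\end{equation*}
valid for every $N$. Once this is available, everything else follows mechanically. The backward martingale property is immediate from the tower property together with the inclusion $\mathcal{F}_{N+1} \subset \mathcal{F}_N$ (which holds because $m_N$ and $n_N$ are nondecreasing, so the index set $\{(k,l): k \ge m_N,\ l \ge n_N\}$ shrinks with $N$): one gets $\mathbb{E}[U^h_N \mid \mathcal{F}_{N+1}] = \mathbb{E}[h(Y_{\{1,2;1,2\}}) \mid \mathcal{F}_{N+1}] = U^h_{N+1}$. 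Square integrability follows from the conditional Jensen inequality, since $\mathbb{E}[(U^h_N)^2] = \mathbb{E}[(\mathbb{E}[h(Y_{\{1,2;1,2\}}) \mid \mathcal{F}_N])^2] \le \mathbb{E}[h(Y_{\{1,2;1,2\}})^2] < \infty$. Finally, convergence and the identification of the limit come from Theorem~\ref{th:martingale_convergence}: applied to $M_N = U^h_N$ it gives $U^h_N \to \mathbb{E}[U^h_1 \mid \mathcal{F}_\infty]$ almost surely and in $L_2$, and the representation yields $\mathbb{E}[U^h_1 \mid \mathcal{F}_\infty] = \mathbb{E}[\mathbb{E}[h(Y_{\{1,2;1,2\}}) \mid \mathcal{F}_1] \mid \mathcal{F}_\infty] = \mathbb{E}[h(Y_{\{1,2;1,2\}}) \mid \mathcal{F}_\infty] = U^h_\infty$.

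The crux is therefore the representation identity, and this is where row-column exchangeability enters. First I would record that $\mathcal{F}_N$ is invariant under any $\Phi = (\sigma_1, \sigma_2)$ permuting the first $m_N$ rows and $n_N$ columns among themselves: every generator $U^h_{kl}$ with $k \ge m_N$, $l \ge n_N$ is an average of $h$ over all row-pairs and column-pairs of the $k \times l$ top-left block, and such a $\Phi$ merely permutes the index set $\{1,\dots,k\} \times \{1,\dots,l\}$, leaving the average unchanged; hence $U^h_{kl}(\Phi Y) = U^h_{kl}(Y)$ and every $\mathcal{F}_N$-measurable variable is $\Phi$-invariant. Next, for distinct $i_1, i_2 \in \{1,\dots,m_N\}$ and distinct $j_1, j_2 \in \{1,\dots,n_N\}$, I would choose $\Phi$ with $\sigma_1(1) = i_1$, $\sigma_1(2) = i_2$, $\sigma_2(1) = j_1$, $\sigma_2(2) = j_2$, so that $(\Phi Y)_{\{1,2;1,2\}} = Y_{\{i_1,i_2;j_1,j_2\}}$. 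Testing against an arbitrary bounded function $\phi$ of the generators $G$ of $\mathcal{F}_N$, and using in turn the $\Phi$-invariance of $G$ and $\Phi Y \overset{\mathcal{D}}{=} Y$,
\begin{equation*}
    \mathbb{E}\big[h(Y_{\{i_1,i_2;j_1,j_2\}})\,\phi(G(Y))\big] = \mathbb{E}\big[h((\Phi Y)_{\{1,2;1,2\}})\,\phi(G(\Phi Y))\big] = \mathbb{E}\big[h(Y_{\{1,2;1,2\}})\,\phi(G(Y))\big],
\end{equation*}
which shows that $\mathbb{E}[h(Y_{\{i_1,i_2;j_1,j_2\}}) \mid \mathcal{F}_N]$ does not depend on the chosen quadruplet. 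Averaging this common value over all quadruplets of the block, and noting that $U^h_N$ is itself $\mathcal{F}_N$-measurable, produces the representation identity.

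An equivalent route avoids the representation and verifies the martingale step directly. By Corollary~\ref{cor:partition}, passing from $N$ to $N+1$ adds exactly one row or one column; treating the add-a-row case, one writes $U^h_{N+1}$ as the average over the added row index of the leave-one-out statistics $U^h_{m_N,n_N}$ on each $m_N$-subset of rows, and then shows, by the same invariance-plus-RCE argument, that all these leave-one-out statistics share the common conditional expectation given $\mathcal{F}_{N+1}$, whence $\mathbb{E}[U^h_N \mid \mathcal{F}_{N+1}] = U^h_{N+1}$. In either route the symmetry property~\eqref{eq:symmetry} of $h$ guarantees that the quadruplet statistics are well defined independently of the internal ordering of the pairs, so the permutations above cause no ambiguity. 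The main obstacle is precisely this permutation argument carried out at the level of conditional expectations: one must combine the invariance of $\mathcal{F}_N$ under within-block permutations with the distributional identity $\Phi Y \overset{\mathcal{D}}{=} Y$ carefully, since it is here that exchangeability is converted into a pointwise equality of conditional expectations. The remaining ingredients — monotonicity of the filtration, the Jensen bound, and the invocation of Theorem~\ref{th:martingale_convergence} — are routine.
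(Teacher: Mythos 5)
Your proposal is correct and follows essentially the same route as the paper: the paper's own proof rests on the same key fact (Lemma~\ref{lemma:expect}, that $\mathbb{E}[h(Y_{\{i_1,i_2;j_1,j_2\}}) \mid \mathcal{F}_N]$ is the same for all quadruplets in the $m_N \times n_N$ block, proved by combining invariance of the generators $U^h_{kl}$ under within-block permutations with $\Phi Y \overset{\mathcal{D}}{=} Y$), which immediately yields your representation identity $U^h_N = \mathbb{E}[h(Y_{\{1,2;1,2\}}) \mid \mathcal{F}_N]$ and then the martingale property, square integrability, and convergence exactly as you describe. Your testing argument against bounded functions of the generators is in fact a slightly more explicit rendering of the step the paper states informally as ``conditionally on $\mathcal{F}_N$ the first $m_N$ rows and $n_N$ columns are exchangeable.''
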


The proof relies on the following lemma.
\begin{lemma}\label{lemma:expect}
  For all $1 \le i_1 < i_2 \le m_N$ and $1 \le j_1 < j_2 \le n_N$, $\mathbb{E}[h(Y_{(i_1, i_2; j_1, j_2)}) | \mathcal{F}_{N}] = \mathbb{E}[h(Y_{(1, 2; 1, 2)}) | \mathcal{F}_{N}]$.
  \label{lem:cond_exp_quadruplet}
\end{lemma}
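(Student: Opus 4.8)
The plan is to exploit the row-column exchangeability of $Y$ together with the fact that $\mathcal{F}_N$ is, by construction, invariant under finite permutations of the rows indexed by $\{1,\dots,m_N\}$ and the columns indexed by $\{1,\dots,n_N\}$. The target statement says that conditionally on $\mathcal{F}_N$, every quadruplet kernel $h(Y_{\{i_1,i_2;j_1,j_2\}})$ with indices inside the $m_N \times n_N$ block has the same conditional expectation as the reference quadruplet $h(Y_{\{1,2;1,2\}})$. The key observation is that $\mathcal{F}_N$ is generated by the $U$-statistics $U^h_{kl}$ for $k \ge m_N$, $l \ge n_N$, and each such $U$-statistic is a symmetric function of the rows and columns of the enclosing matrix, hence unchanged if we permute rows within $\{1,\dots,m_N\}$ or columns within $\{1,\dots,n_N\}$ (and likewise permute the later rows/columns among themselves). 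So $\mathcal{F}_N$ is measurable with respect to the $\sigma$-field of such permutation-invariant events.

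First I would fix indices $1 \le i_1 < i_2 \le m_N$ and $1 \le j_1 < j_2 \le n_N$ and choose a permutation $\Phi = (\sigma_1,\sigma_2) \in \mathbb{S}_{m_N}\times\mathbb{S}_{n_N}$ (extended to act trivially on indices beyond $m_N$, $n_N$) that sends $(1,2;1,2)$ to $(i_1,i_2;j_1,j_2)$, i.e. $\sigma_1(1)=i_1$, $\sigma_1(2)=i_2$, $\sigma_2(1)=j_1$, $\sigma_2(2)=j_2$. Such a $\Phi$ clearly exists. Since $Y$ is RCE, $\Phi Y \overset{\mathcal{D}}{=} Y$; I would upgrade this to a statement about the joint law of the pair $\big(h(Y_{\{1,2;1,2\}}),\, \mathcal{F}_N\big)$ by noting that applying $\Phi$ carries $h(Y_{\{1,2;1,2\}})$ to $h(Y_{\{i_1,i_2;j_1,j_2\}})$ (using the symmetry property~\eqref{eq:symmetry} to absorb any ordering discrepancy in the image indices), while it leaves each generating $U$-statistic $U^h_{kl}$ invariant because these are fully symmetric in the relevant row and column indices. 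Hence the pair $\big(h(Y_{\{i_1,i_2;j_1,j_2\}}),\, \mathcal{F}_N\big)$ has the same joint distribution as $\big(h(Y_{\{1,2;1,2\}}),\, \mathcal{F}_N\big)$.

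From equality in distribution of these pairs, and because $\mathcal{F}_N$ is mapped to itself setwise by $\Phi$, the two conditional expectations $\mathbb{E}[h(Y_{\{i_1,i_2;j_1,j_2\}})\mid\mathcal{F}_N]$ and $\mathbb{E}[h(Y_{\{1,2;1,2\}})\mid\mathcal{F}_N]$ must coincide almost surely; this is the standard fact that conditional expectation is preserved under a measure-preserving transformation that also preserves the conditioning $\sigma$-field. The integrability needed to make these conditional expectations well defined follows from the hypothesis $\mathbb{E}[h(Y_{\{1,2;1,2\}})^2]<\infty$ combined with the exchangeability, which gives that all quadruplet kernels in the block are identically distributed and square-integrable.

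The main obstacle I anticipate is making precise the claim that the generating $U$-statistics are invariant under $\Phi$ and therefore that $\Phi$ acts as a measure-preserving transformation fixing $\mathcal{F}_N$; one has to be careful that $\Phi$ permutes indices only within the first $m_N$ rows and $n_N$ columns, so it does not disturb the larger matrices defining $U^h_{kl}$ for $k>m_N$ or $l>n_N$ beyond an internal symmetric reshuffling that leaves each $U$-statistic fixed. The cleanest route is to verify directly that for every $k \ge m_N$ and $l \ge n_N$, the extended permutation leaves $U^h_{kl}$ numerically unchanged (because $U^h_{kl}$ is an average over \emph{all} quadruplets in its $k\times l$ block and permutation merely reindexes the summands), so that $\mathcal{F}_N$ is literally invariant rather than merely invariant in distribution. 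Once this invariance is pinned down, the transfer of conditional expectations is routine.
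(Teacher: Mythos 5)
Your proposal is correct and follows essentially the same route as the paper: both arguments rest on the observation that each generating $U$-statistic $U^h_{kl}$, $k \ge m_N$, $l \ge n_N$, is numerically invariant under any $\Phi \in \mathbb{S}_{m_N}\times\mathbb{S}_{n_N}$, which combined with $\Phi Y \overset{\mathcal{D}}{=} Y$ yields that the first $m_N$ rows and $n_N$ columns are exchangeable conditionally on $\mathcal{F}_N$, whence the conditional expectations agree. Your write-up merely makes explicit (via the joint law of the pair and the transfer of conditional expectations) the step the paper summarizes as ``the result to prove follows from this.''
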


\begin{proof}
  In the proof of this lemma, we specify the matrices over which the $U$-statistics are taken, i.e. we denote $U^h_{k,l}(Y)$ instead of $U^h_{k,l}$ the $U$-statistic of kernel $h$ and of size $k \times l$ taken on $Y$. 
  
  By construction, for all $k \ge m_N, l \ge n_N$, for all matrix permutations $\Phi \in \mathbb{S}_{m_N} \times \mathbb{S}_{n_N}$ (only acting on the first $m_N$ rows and $n_N$ columns), we have $U^h_{k,l}(\Phi Y) = U^h_{k,l}(Y)$. Moreover, since $Y$ is RCE, we also have $\Phi Y \overset{\mathcal{D}}{=} Y$. Therefore,
  \begin{equation*}
    \Phi Y | (U^h_{k,l}(Y), k \ge m_N, l \ge n_N) \overset{\mathcal{D}}{=} Y | (U^h_{k,l}(Y), k \ge m_N, l \ge n_N).
  \end{equation*}
  That means that conditionally on $\mathcal{F}_N$, the first $m_N$ rows and $n_N$ columns of $Y$ are exchangeable and the result to prove follows from this.
\end{proof}

\begin{proof}[Proof of Proposition~\ref{prop:martingale}]
  First, we remark that as $\mathbb{E}[h(Y_{(1,2;1,2)})^2] < \infty$, then for all $N$, $\mathbb{E}[(U^h_N)^2] < \infty$. Thus, the $(U^h_N)_{N \ge 1}$ are square-integrable.
  Second, $\mathcal{F} = (\mathcal{F}_N)_{N \ge 1}$ is a decreasing filtration and for all $N$, $U^h_N$ is $\mathcal{F}_N$-measurable. 
  
  Now using lemma~\ref{lem:cond_exp_quadruplet}, we have for all $K \le N$,
  \begin{equation*}
    \begin{split}
        \mathbb{E}[U^h_{K} | \mathcal{F}_{N}] &= \binom{m_{K}}{2}^{-2} \binom{n_{K}}{2}^{-2} \sum_{\substack{1 \le i_1 < i_2 \le m_{K} \\ 1 \le j_1 < j_2 \le n_{K}}} \mathbb{E}[h(Y_{(i_1, i_2; j_1, j_2)}) | \mathcal{F}_{N}] \\
        &= \binom{m_{K}}{2}^{-2} \binom{n_{K}}{2}^{-2} \sum_{\substack{1 \le i_1 < i_2 \le m_{K} \\ 1 \le j_1 < j_2 \le n_{K}}} \mathbb{E}[h(Y_{(1, 2; 1, 2)}) | \mathcal{F}_{N}] \\
        &= \mathbb{E}[h(Y_{(1, 2; 1, 2)}) | \mathcal{F}_{N}],\\
    \end{split}
  \end{equation*}
  In particular, $\mathbb{E}[U^h_{N-1} | \mathcal{F}_{N}] = \mathbb{E}[U^h_{N} | \mathcal{F}_{N}] = U^h_{N}$, which concludes the proof that $(U^h_N, \mathcal{F}_N)_{N \ge 1}$ is a square-integrable backward martingale. Finally, Theorem~\ref{th:martingale_convergence} ensures that $U^h_N \xrightarrow[N \rightarrow \infty]{a.s., L_2} U^h_\infty$.
\end{proof}

\section{Asymptotic variance \label{app:variance}}

We prove Proposition~\ref{prop:asymptotic_variance} which gives the convergence and an expression for the asymptotic variance. The proof involves some tedious calculations. Before that, we introduce some notations to make the proof of Proposition~\ref{prop:asymptotic_variance} more readable.

\begin{notation*} In this appendix and in Appendix~\ref{app:lindeberg}, we denote 
    \begin{align}
        X_{[i_1, i_2; j_1, j_2]} & := h(Y_{\{i_1, i_2; j_1, j_2\}}), & 
        Z_{NK} & := \sqrt{N} (U_K-U_{K+1}), \nonumber \\
        S_{NK} & := \mathbb{E}[Z_{NK}^2 | \mathcal{F}_{K+1}], &
        V_N & := \sum_{K=N}^\infty S_{NK}.
        \label{eq:xzsv}
    \end{align}
    The exchangeability of $Y$ implies that $\mathbb{E}[X_{\{i_1, i_2; j_1, j_2\}} X_{\{i'_1, i'_2; j'_1, j'_2\}}|\mathcal{F}_K]$ only depends on the numbers of rows and columns shared by both $\{i_1, i_2; j_1, j_2\}$ and $\{i'_1, i'_2; j'_1, j'_2\}$. For $0 \le p \le 2$ and $0 \le q \le 2$, we set 
    \begin{equation*}
        c^{(p,q)}_K := \mathbb{E}[X_{\{i_1, i_2; j_1, j_2\}} X_{\{i'_1, i'_2; j'_1, j'_2\}}|\mathcal{F}_K],
    \end{equation*} 
    and 
    \begin{equation*}
        c^{(p,q)}_\infty := \mathbb{E}[X_{\{i_1, i_2; j_1, j_2\}} X_{\{i'_1, i'_2; j'_1, j'_2\}}|\mathcal{F}_\infty],
    \end{equation*} 
    where they share $p$ rows and $q$ columns. 
    \label{not:xzsv}
\end{notation*}

\begin{proposition}
    Let $(V_N)_{N \ge 1}$ be as defined in~\eqref{eq:xzsv}. Then, under the hypotheses of Theorem~\ref{th:mixture_theorem}, we have
    \begin{equation*}
        V_N \xrightarrow[N \rightarrow \infty]{\mathbb{P}} V = 4c^{-1}(c^{(1,0)}_\infty - (U^h_\infty)^2) + 4(1-c)^{-1}(c^{(0,1)}_\infty - (U^h_\infty)^2).
    \end{equation*}
    \label{prop:asymptotic_variance}
\end{proposition}
The proof of Proposition~\ref{prop:asymptotic_variance} will be based on the following five lemmas.

\begin{lemma}
  If $K \in \mathcal{B}_c$ (see Definition~\ref{def:partition_sets}), then 
  \begin{equation*}
      Z_{N,K-1} = \sqrt{N} \frac{2}{m_{K} - 2}( U^h_{K} - \delta_{K} ),
  \end{equation*}
  where
  \begin{equation*}
      \delta_{K} = (m_{K}-1)^{-1} \binom{n_{K}}{2}^{-1} \sum\limits_{\substack{1 \le i_1 \le m_{K}-1 \\ 1 \le j_1 < j_2 \le n_{K}}} X_{\{i_1, m_K; j_1, j_2\}}.
  \end{equation*}
  \label{lem:form_z}
\end{lemma}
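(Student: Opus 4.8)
The statement is a purely combinatorial identity relating two consecutive $U$-statistics, so the plan is to expand both explicitly and match terms; no probabilistic input is needed. First I would record what membership in $\mathcal{B}_c$ means: by Definition~\ref{def:partition_sets} together with Corollary~\ref{cor:partition}, $K \in \mathcal{B}_c$ holds precisely when $m_K = m_{K-1}+1$ and $n_K = n_{K-1}$. Writing $m := m_K$ and $n := n_K$ for brevity, this says $U_{K-1}$ is the quadruplet $U$-statistic over the $(m-1)\times n$ matrix, while $U_K$ is the one over the $m \times n$ matrix obtained by adjoining the single new row $m$, the column structure being untouched. Recalling that $Z_{N,K-1} = \sqrt{N}(U_{K-1}-U_K)$, the goal reduces to expressing $U_{K-1}-U_K$ through $U_K$ and $\delta_K$.

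The core step is to split the defining sum of $U_K$ according to whether the row pair $\{i_1,i_2\}$ involves the new row $m$. Every pair $1 \le i_1 < i_2 \le m$ either has $i_2 \le m-1$ or has $i_2 = m$, so
\[
\sum_{\substack{1 \le i_1 < i_2 \le m \\ 1 \le j_1 < j_2 \le n}} X_{[i_1,i_2;j_1,j_2]} = A + B,
\]
where $A$ is the sum over $1 \le i_1 < i_2 \le m-1$ (quadruplets avoiding the new row) and $B = \sum_{\substack{1 \le i_1 \le m-1 \\ 1 \le j_1 < j_2 \le n}} X_{[i_1,m;j_1,j_2]}$ collects the quadruplets containing it. Matching with the normalisations, one reads off $A = \binom{m-1}{2}\binom{n}{2}\,U_{K-1}$, $A + B = \binom{m}{2}\binom{n}{2}\,U_K$, and $B = (m-1)\binom{n}{2}\,\delta_K$ directly from the definition of $\delta_K$.

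It then remains to eliminate $A$ and simplify. Substituting $A = (A+B) - B$ into $A = \binom{m-1}{2}\binom{n}{2}U_{K-1}$ and cancelling the common factor $\binom{n}{2}$ gives $\binom{m-1}{2}U_{K-1} = \binom{m}{2}U_K - (m-1)\delta_K$; dividing by $\binom{m-1}{2}$ and using $\binom{m}{2}/\binom{m-1}{2} = m/(m-2)$ and $(m-1)/\binom{m-1}{2} = 2/(m-2)$ yields
\[
U_{K-1} = \frac{m}{m-2}\,U_K - \frac{2}{m-2}\,\delta_K .
\]
Subtracting $U_K$ collapses the factor to $U_{K-1}-U_K = \frac{2}{m-2}(U_K - \delta_K)$, and multiplying by $\sqrt{N}$ gives exactly the claimed expression for $Z_{N,K-1}$. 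There is no genuine obstacle here: the argument is a clean algebraic identity rather than an estimate, and the only point requiring care is the bookkeeping of the binomial coefficients, cleanly enabled by the fact that the column index set is identical for $U_{K-1}$ and $U_K$ so that every $\binom{n}{2}$ factor cancels.
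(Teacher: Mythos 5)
Your proof is correct and follows essentially the same route as the paper's: both split the defining sum of $U_K$ according to whether the row pair contains the newly added row $m_K$, identify the two pieces with $\binom{m_K-1}{2}\binom{n_K}{2}U_{K-1}$ and $(m_K-1)\binom{n_K}{2}\delta_K$, and solve the resulting linear identity for $U_{K-1}-U_K$. The bookkeeping of binomial coefficients and the final simplification agree exactly with the paper.
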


\begin{proof}
  Observe that
  \begin{equation}
      \sum_{\substack{1 \le i_1 < i_2 \le m_{K} \\ 1 \le j_1 < j_2 \le n_{K}}} X_{\{i_1, i_2; j_1, j_2\}} = \sum_{\substack{1 \le i_1 < i_2 \le m_{K}-1 \\ 1 \le j_1 < j_2 \le n_{K}}} X_{\{i_1, i_2; j_1, j_2\}} + \sum_{\substack{1 \le i_1 \le m_{K}-1 \\ 1 \le j_1 < j_2 \le n_{K}}} X_{\{i_1, m_K; j_1, j_2\}}.
      \label{eq:d2_1}
  \end{equation}
  But if $K \in \mathcal{B}_c$, then $m_{K-1} = m_K - 1$ and $n_{K-1} = n_K$. Therefore, equation~(\ref{eq:d2_1}) is equivalent to
  \begin{equation*}
    \binom{m_{K}}{2} \binom{n_{K}}{2} U^h_K = \binom{m_{K}-1}{2} \binom{n_{K}}{2} U^h_{K-1} + (m_K-1)\binom{n_{K}}{2} \delta_K,
  \end{equation*}
  so
  \begin{equation*}
    U^h_{K-1} = \frac{1}{m_K-2} \left( m_K U^h_{K} - 2 \delta_K \right).
  \end{equation*}
  This concludes the proof since $Z_{N,K-1} = \sqrt{N} ( U^h_{K-1} - U^h_{K} )$.
\end{proof}

We now calculate $S_{NK}$ in the following lemmas. 

\begin{lemma}
    For all $0 \le p \le 2$ and $0 \le q \le 2$, $c^{(p,q)}_N \xrightarrow[N \rightarrow \infty]{a.s., L_1} c^{(p,q)}_\infty$.
  \label{lem:conv_cov}
\end{lemma}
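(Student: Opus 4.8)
The plan is to read each $c^{(p,q)}_N$ as the $N$-th term of a backward martingale and then invoke the backward martingale convergence theorem (Theorem~\ref{th:martingale_convergence}). I would fix a pair $(p,q)$ with $0 \le p,q \le 2$ and choose one representative configuration of indices $[i_1,i_2;j_1,j_2]$ and $[i_1',i_2';j_1',j_2']$ sharing exactly $p$ rows and $q$ columns; I then set $W := X_{[i_1,i_2;j_1,j_2]} X_{[i_1',i_2';j_1',j_2']}$. With this notation $c^{(p,q)}_K = \mathbb{E}[W|\mathcal{F}_K]$ and $c^{(p,q)}_\infty = \mathbb{E}[W|\mathcal{F}_\infty]$ are conditional expectations of one and the same random variable $W$, which is all the convergence proof requires.

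First I would check that $W$ is integrable. Because $Y$ is RCE, every quadruplet $Y_{\{i,i';j,j'\}}$ has the same distribution as $Y_{\{1,2;1,2\}}$, so $\mathbb{E}[X_{[i,i';j,j']}^2] = \mathbb{E}[h(Y_{\{1,2;1,2\}})^2] < \infty$ by hypothesis, and Cauchy--Schwarz gives $\mathbb{E}[|W|] \le \mathbb{E}[h(Y_{\{1,2;1,2\}})^2] < \infty$, so the conditional expectations are well defined. Next I would note that $(\mathcal{F}_N)_{N \ge 1}$ is decreasing: as $N$ grows, $m_N$ and $n_N$ increase, so the index set $\{(k,l) : k \ge m_N,\, l \ge n_N\}$ shrinks and $\mathcal{F}_{N+1} \subset \mathcal{F}_N$. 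Writing $M_N := \mathbb{E}[W|\mathcal{F}_N]$, the tower property then yields $\mathbb{E}[M_N|\mathcal{F}_{N+1}] = \mathbb{E}[W|\mathcal{F}_{N+1}] = M_{N+1}$, so $(M_N,\mathcal{F}_N)_{N \ge 1}$ is a backward martingale.

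Finally, Theorem~\ref{th:martingale_convergence} applies and gives $M_N \xrightarrow[N \rightarrow \infty]{a.s., L_1} \mathbb{E}[M_1|\mathcal{F}_\infty]$; since $\mathcal{F}_\infty \subset \mathcal{F}_1$, one further use of the tower property identifies this limit with $\mathbb{E}[W|\mathcal{F}_\infty] = c^{(p,q)}_\infty$, which is precisely the asserted convergence $c^{(p,q)}_N \xrightarrow[N \rightarrow \infty]{a.s., L_1} c^{(p,q)}_\infty$. The argument works verbatim for each of the nine pairs $(p,q)$, so no case distinction is needed. I do not expect a genuine obstacle: the statement is essentially L\'evy's downward theorem applied to $W$, and the only points deserving a line of justification are the integrability of $W$ and the identification of the limit, both handled above.
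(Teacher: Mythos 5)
Your proof is correct and takes essentially the same approach as the paper: the paper's entire proof is the one-line observation that $(c^{(p,q)}_N, \mathcal{F}_N)_{N \ge 1}$ is a backward martingale, to which Theorem~\ref{th:martingale_convergence} applies. Your write-up simply supplies the details the paper leaves implicit — integrability of $W$ via Cauchy--Schwarz, the tower property giving the martingale structure, and the identification of the limit $\mathbb{E}[M_1|\mathcal{F}_\infty] = \mathbb{E}[W|\mathcal{F}_\infty]$.
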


\begin{proof}
  This follows from the fact that $(c^{(p,q)}_N, \mathcal{F}_N)_{N \ge 1}$ is a backward martingale.
\end{proof}

\begin{lemma}
  If $K \in \mathcal{B}_c$ (see Definition~\ref{def:partition_sets}), then 
  \begin{equation*}
      S_{N,K-1} = 4 N \bigg(\frac{(n_K-2)(n_K -3)}{(m_K -1)(m_K-2)n_K(n_K -1)} c^{(1,0)}_{K} - \frac{1}{(m_{K} - 2)^2} (U^h_{K})^2 + \psi(K) \bigg),
  \end{equation*}
  where $\psi$ does not depend on $N$ and $\psi(K) = o(m_K^{-2})$.
  \label{lem:form_s}
\end{lemma}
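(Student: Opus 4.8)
The plan is to start from the explicit form of $Z_{N,K-1}$ supplied by Lemma~\ref{lem:form_z} and reduce $S_{N,K-1} = \mathbb{E}[Z_{N,K-1}^2 \mid \mathcal{F}_K]$ to a conditional second moment of $\delta_K$. Since $K \in \mathcal{B}_c$, Lemma~\ref{lem:form_z} gives $Z_{N,K-1} = \frac{2\sqrt{N}}{m_K-2}(U_K - \delta_K)$, whence $S_{N,K-1} = \frac{4N}{(m_K-2)^2}\,\mathbb{E}[(U_K-\delta_K)^2 \mid \mathcal{F}_K]$. This prefactor already isolates the $4N$ of the statement, and $\psi$ will be whatever is left over, which is visibly independent of $N$. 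Now $U_K$ is $\mathcal{F}_K$-measurable, and because $\delta_K$ is an average of the $X_{[i_1,m_K;j_1,j_2]}$ with $i_1 < m_K$ and $j_1 < j_2 \le n_K$, Lemma~\ref{lem:cond_exp_quadruplet} together with the identity $\mathbb{E}[h(Y_{\{1,2;1,2\}}) \mid \mathcal{F}_K] = U_K$ (established in the proof of Proposition~\ref{prop:martingale}) yields $\mathbb{E}[\delta_K \mid \mathcal{F}_K] = U_K$. Expanding the square therefore gives $\mathbb{E}[(U_K-\delta_K)^2 \mid \mathcal{F}_K] = \mathbb{E}[\delta_K^2 \mid \mathcal{F}_K] - U_K^2$, which already accounts for the term $-\frac{1}{(m_K-2)^2}U_K^2$.

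The core of the argument is to expand $\delta_K^2$ as a double sum over two quadruplets $[i_1,m_K;j_1,j_2]$ and $[i_1',m_K;j_1',j_2']$ and take the conditional expectation term by term. Both quadruplets contain the row $m_K$, so they always share at least one row: they share $p=2$ rows exactly when $i_1 = i_1'$ and $p=1$ otherwise, while the number $q \in \{0,1,2\}$ of shared columns is the overlap of $\{j_1,j_2\}$ with $\{j_1',j_2'\}$. By exchangeability each summand's conditional expectation equals $c^{(p,q)}_K$, so I would group the double sum into the six classes indexed by $(p,q)$ and count the index pairs in each. Writing $M := m_K-1$ and $n := n_K$, the row counts are $M(M-1)$ for $p=1$ and $M$ for $p=2$, and the column counts are $\binom{n}{2}\binom{n-2}{2}$ for $q=0$, $n(n-1)(n-2)$ for $q=1$, and $\binom{n}{2}$ for $q=2$ (one checks these three sum to $\binom{n}{2}^2$).

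After dividing by the normalising constant $M^{-2}\binom{n}{2}^{-2}$, the dominant class is $(p,q)=(1,0)$, whose coefficient simplifies exactly to $\frac{M-1}{M}\cdot\frac{\binom{n-2}{2}}{\binom{n}{2}} = \frac{m_K-2}{m_K-1}\cdot\frac{(n_K-2)(n_K-3)}{n_K(n_K-1)}$; multiplying by $\frac{1}{(m_K-2)^2}$ recovers precisely the stated leading coefficient $\frac{(n_K-2)(n_K-3)}{(m_K-1)(m_K-2)n_K(n_K-1)}$ of $c^{(1,0)}_K$. I would then collect the remaining five classes into $\psi(K)$ and verify that each is negligible: relative to the dominant class, the classes with $q=1$ lose a factor of order $n_K^{-1}$, those with $q=2$ a factor of order $n_K^{-2}$, and those with $p=2$ a factor of order $m_K^{-1}$. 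Since Lemma~\ref{lem:conv_cov} ensures every $c^{(p,q)}_K$ converges a.s. and is therefore a.s. bounded, multiplying each residual combinatorial weight by $m_K^2$ sends it to $0$, which gives $\psi(K) = o(m_K^{-2})$ a.s.

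The main obstacle is the bookkeeping in the combinatorial counting and, above all, the exact cancellation producing the clean leading coefficient: one must confirm that the $(1,0)$ weight telescopes to $\frac{m_K-2}{m_K-1}\cdot\frac{(n_K-2)(n_K-3)}{n_K(n_K-1)}$ with no error term, so that the $c^{(1,0)}_K$ contribution appears exactly and every genuinely lower-order piece is safely absorbed into $\psi(K)$. Once this is secured, checking the order estimates for the five residual classes and assembling them is routine.
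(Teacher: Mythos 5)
Your proposal is correct and follows essentially the same route as the paper's own proof: reduce $S_{N,K-1}$ via Lemma~\ref{lem:form_z} and $\mathbb{E}[\delta_K\mid\mathcal{F}_K]=U_K$ to $\frac{4N}{(m_K-2)^2}\big(\mathbb{E}[\delta_K^2\mid\mathcal{F}_K]-U_K^2\big)$, then expand $\delta_K^2$ as a double sum, classify pairs of quadruplets by shared rows and columns $(p,q)$, extract the dominant $(1,0)$ class whose combinatorial weight gives exactly the stated coefficient of $c^{(1,0)}_K$, and absorb the five remaining classes into $\psi(K)=o(m_K^{-2})$. Your counting (including the $q$-class totals summing to $\binom{n_K}{2}^2$ and the leading coefficient $\frac{m_K-2}{m_K-1}\cdot\frac{(n_K-2)(n_K-3)}{n_K(n_K-1)}$) matches the paper's computation, and your order estimates for the residual classes justify the $o(m_K^{-2})$ claim in the same way.
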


\begin{proof}
  Because of Lemma~\ref{lem:form_z} and the $\mathcal{F}_K$-measurability of $U^h_K$,
  \begin{equation*}
    S_{N,K-1} = \frac{4N}{(m_{K} - 2)^2}(\mathbb{E}[\delta_{K}^2|\mathcal{F}_K] + (U^h_{K})^2 -  2 U^h_{K}\mathbb{E}[\delta_{K}|\mathcal{F}_K] ).
  \end{equation*}
  First, Lemma~\ref{lem:cond_exp_quadruplet} implies that 
  \begin{equation*}
      \mathbb{E}[\delta_{K}|\mathcal{F}_K] = U^h_K.
  \end{equation*}
  Then, we can calculate
  \begin{equation*}
    \mathbb{E}[\delta_{K}^2|\mathcal{F}_K] = (m_{K}-1)^{-2} \binom{n_{K}}{2}^{-2} \sum_{\substack{1 \le i_1 \le m_{K}-1 \\ 1 \le j_1 < j_2 \le n_{K}}} \sum_{\substack{1 \le i'_1 \le m_{K}-1 \\ 1 \le j'_1 < j'_2 \le n_{K}}} \mathbb{E}[X_{\{i_1, m_K; j_1, j_2\}} X_{\{i'_1, m_K; j'_1, j'_2\}}|\mathcal{F}_K].
  \end{equation*} 
 Each term of the sum only depends on the number of rows and columns the quadruplets in $X_{\{i_1, m_K; j_1, j_2\}}$ and $X_{\{i'_1, m_K; j'_1, j'_2\}}$ have in common. For example, if they share $p$ rows and $q$ columns, it is equal to $c^{(p,q)}_K$. So by breaking down the different cases for $p$ and $q$, we may count the number of possibilities. For example, if $(p,q) = (1,2)$, then the number of possibilities is $(m_K - 1) (m_K-2) \binom{n_{K}}{2}$. This gives
  \begin{equation*}
    \begin{split}
      \mathbb{E}[\delta_{K}^2|\mathcal{F}_K] &= (m_{K}-1)^{-1} \binom{n_{K}}{2}^{-1} \bigg\{ \frac{1}{2}(m_K -2)(n_K -2)(n_K -3) c^{(1,0)}_{K} + 2 (m_K -2)(n_K-2) c^{(1,1)}_{K} \\
      &\quad+ (m_K -2) c^{(1,2)}_{K} + \frac{1}{2}(n_K-2)(n_K-3)  c^{(2,0)}_{K} + 2(n_K -2) c^{(2,1)}_{K} + c^{(2,2)}_{K} \bigg\}.
    \end{split}
  \end{equation*}  
    
    Finally, setting
  \begin{equation*}
    \begin{split}
      \psi(K) &:= (m_{K}-1)^{-3} \binom{n_{K}}{2}^{-1} \bigg\{ 2 (m_K -2)(n_K-2) c^{(1,1)}_{K}+ (m_K -2) c^{(1,2)}_{K}  \\
      &\quad+ \frac{1}{2}(n_K-2)(n_K-3)  c^{(2,0)}_{K} + 2(n_K -2) c^{(2,1)}_{K} + c^{(2,2)}_{K} \bigg\},
    \end{split}
  \end{equation*}
  we obtain the desired result, with $\psi(K) = o(m_K^{-2})$ since $\frac{m_K}{c} \underset{K}{\sim} \frac{n_K}{1-c} \underset{K}{\sim} K$.
\end{proof}

\begin{remark*}
  In the case where $K \in \mathcal{B}_{1-c}$, the equivalent formulas to those of Lemmas~\ref{lem:form_z} and~\ref{lem:form_s} are derived from similar proofs. 
  If $K \in \mathcal{B}_{1-c}$, then 
  \begin{equation*}
      Z_{N,K-1} = \sqrt{N} \frac{2}{n_{K} - 2}( U^h_{K} - \gamma_{K} ),
  \end{equation*}
  where
  \begin{equation*}
      \gamma_{K} = (n_{K}-1)^{-1} \binom{m_{K}}{2}^{-1} \sum\limits_{\substack{1 \le i_1  < i_2 \le m_{K} \\ 1 \le j_1 \le n_{K}-1}} X_{\{i_1, i_2; j_1, n_K\}},
  \end{equation*}
  and 
  \begin{equation*}
      S_{N,K-1} = 4 N \bigg(\frac{(m_K-2)(m_K -3)}{(n_K -1)(n_K-2)m_K(m_K -1)} c^{(0,1)}_{K} - \frac{1}{(n_{K} - 2)^2} (U^h_{K})^2 + \varphi(K) \bigg),
  \end{equation*}
  where $\varphi$ does not depend on $N$ and $\varphi(K) = o(n_K^{-2})$.
\end{remark*}

\begin{lemma} \label{lem:cesaro}
  Let $(R_{n})_{n \ge 1}$ be a sequence of random variables and $(\lambda_{n})_{n \ge 1}$ a sequence of real positive numbers. Set $C_n := n \sum_{k=n}^\infty \lambda_{k} R_{k}$. If
  \begin{itemize}
      \item $n \sum_{k=n}^{\infty} \lambda_{k} \xrightarrow[n \rightarrow \infty]{} 1$, and
      \item there exists a random variable $R_\infty$ such that $R_{n} \xrightarrow[n \rightarrow \infty]{a.s.} R_\infty$,
  \end{itemize} 
  then $C_n \xrightarrow[n \rightarrow \infty]{a.s.} R_\infty$. 
  Furthermore, if $R_{n} \xrightarrow[n \rightarrow \infty]{L_1} R_\infty$, then $C_n \xrightarrow[n \rightarrow \infty]{L_1} R_\infty$.
\end{lemma}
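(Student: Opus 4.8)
The plan is to treat this as a Toeplitz/Cesàro-type averaging result, exploiting the fact that the summation index starts exactly at $k=n$. Write $\Lambda_n := \sum_{k=n}^\infty \lambda_k$. The hypothesis $n\Lambda_n \to 1$ forces $\Lambda_n$ to be finite for large $n$, and since the $\lambda_k$ are positive, finiteness at one index propagates to all indices, so the $C_n$ are well defined. The first step is the decomposition
\begin{equation*}
  C_n - R_\infty = n\sum_{k=n}^\infty \lambda_k (R_k - R_\infty) + (n\Lambda_n - 1) R_\infty,
\end{equation*}
which splits $C_n - R_\infty$ into a genuine averaging term and a deterministic ``bias'' factor.

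The bias term $(n\Lambda_n - 1)R_\infty$ tends to $0$ almost surely, since $n\Lambda_n \to 1$ and $R_\infty$ is a.s.\ finite; in the $L_1$ setting it tends to $0$ in $L_1$ because $R_\infty \in L_1$ and the scalar $n\Lambda_n - 1$ is deterministic and vanishing. For the main term I would use the bound
\begin{equation*}
  \bigg| n\sum_{k=n}^\infty \lambda_k (R_k - R_\infty) \bigg| \le n\sum_{k=n}^\infty \lambda_k |R_k - R_\infty|.
\end{equation*}
On the almost sure event where $R_k \to R_\infty$, fix $\epsilon > 0$ and choose $K$ with $|R_k - R_\infty| < \epsilon$ for all $k \ge K$. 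Because the sum runs over $k \ge n$, as soon as $n \ge K$ every index appearing in the sum already satisfies this bound, so the right-hand side is at most $\epsilon\, n\Lambda_n$, which converges to $\epsilon$. Letting $\epsilon \downarrow 0$ gives a.s.\ convergence of the main term to $0$, hence $C_n \to R_\infty$ almost surely.

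For the $L_1$ statement I would run the identical averaging argument, but now deterministically, on the numerical sequence $\epsilon_k := \mathbb{E}|R_k - R_\infty|$, which tends to $0$ by hypothesis. Taking expectations and applying Tonelli (all summands are nonnegative) yields $\mathbb{E}\big| n\sum_{k=n}^\infty \lambda_k (R_k - R_\infty)\big| \le n\sum_{k=n}^\infty \lambda_k \epsilon_k$; choosing $K$ with $\epsilon_k < \epsilon$ for $k \ge K$ bounds this by $\epsilon\, n\Lambda_n$ for $n \ge K$, which again tends to $\epsilon$. Together with the $L_1$ control of the bias term, this gives $C_n \to R_\infty$ in $L_1$.

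The argument is essentially routine: the only points requiring care are the well-definedness of the tail sums $\Lambda_n$ (handled above) and the justification of the expectation–sum interchange in the $L_1$ case, which is immediate by Tonelli. The feature that makes everything clean — and which I expect to be the crux worth emphasizing rather than a genuine obstacle — is that matching the lower summation limit to the index $n$ removes the usual difficulty of a Toeplitz argument, namely controlling the finitely many early terms where $R_k$ may be far from $R_\infty$, since those terms simply drop out of the sum once $n$ is large.
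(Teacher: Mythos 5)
Your proof is correct and follows essentially the same route as the paper: the identical decomposition of $C_n - R_\infty$ into the averaging term $n\sum_{k\ge n}\lambda_k(R_k - R_\infty)$ and the bias term $(n\sum_{k\ge n}\lambda_k - 1)R_\infty$, with your $\epsilon$--$K$ argument being just the unwound form of the paper's bound via $\sup_{k \ge n}|R_k - R_\infty|$ (and likewise for the $L_1$ part). Your explicit remarks on well-definedness of the tail sums and on the Tonelli interchange are slightly more careful than the paper, but the substance is the same.
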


\begin{proof}
  Notice that
  \begin{equation*}
      \begin{split}
        |C_n - R_\infty| &= \big|n \sum_{k=n}^\infty \lambda_{k} R_{k} - R_\infty\big|\\
           &\le \big|n\sum_{k=n}^\infty \lambda_{k} R_{k} - n\sum_{k=n}^\infty \lambda_{k} R_\infty\big| + \big|n\sum_{k=n}^\infty \lambda_{k} R_\infty - R_\infty\big| \\
           &\le \bigg(n\sum_{k=n}^\infty \lambda_{k} \bigg) \times \sup_{k \ge n} |R_{k} - R_\infty| + \big| n \sum_{k=n}^\infty \lambda_{k} - 1\big| \times |R_\infty|. \\
      \end{split}
  \end{equation*}
  If $n \sum_{k=n}^{\infty} \lambda_{k} \xrightarrow[n \rightarrow \infty]{} 1$ and $R_{n} \xrightarrow[n \rightarrow \infty]{a.s.} R_\infty$, then for all $\omega$ fixed except a set of neglectable size, $C_n(\omega) \xrightarrow[n \rightarrow \infty]{} R_\infty(\omega)$, which gives the a.s. convergence.
  Now, consider also that
  \begin{equation*}
      \begin{split}
        \mathbb{E}\bigg[|C_n - R_\infty|\bigg] &\le n\sum_{k=n}^\infty \lambda_{k} \mathbb{E}\bigg[|R_{k} - R_\infty|\bigg] + \big| n\sum_{k=n}^\infty \lambda_{k} - 1\big| \mathbb{E}\bigg[|R_\infty|\bigg] \\
        &\le \bigg(n\sum_{k=n}^\infty \lambda_{k} \bigg) \times \sup_{k \ge n}\mathbb{E}\bigg[|R_{k} - R_\infty|\bigg] + \big| n\sum_{k=n}^\infty \lambda_{k} - 1\big| \mathbb{E}\bigg[|R_\infty|\bigg]. \\
      \end{split}
  \end{equation*}
  So if $R_{n} \xrightarrow[n \rightarrow \infty]{L_1} R_\infty$, then $\mathbb{E}\big[|R_{n} - R_\infty|\big] \xrightarrow[n \rightarrow \infty]{L_1} 0$ and $\sup_{k \ge n}\mathbb{E}\big[|R_{k} - R_\infty|\big] \xrightarrow[n \rightarrow \infty]{L_1} 0$. Since $n\sum_{k=n}^{\infty} \lambda_{k} \xrightarrow[n \rightarrow \infty]{} 1$, the first term converges to 0, and the second term too because $\mathbb{E}\big[|R_\infty|\big] < \infty$. Finally, $\mathbb{E}\big[|C_n - R_\infty|\big] \xrightarrow[n \rightarrow \infty]{} 0$.
\end{proof}

\begin{lemma}
  Let $(Q_{n})_{n \ge 1}$ be a sequence of random variables. Set $C_n := n \sum_{k=n}^\infty Q_{k}$. If there exists a random variable $C_\infty$ such that $n^2 Q_{n} \xrightarrow[n \rightarrow \infty]{a.s.} C_\infty$, then $C_n \xrightarrow[n \rightarrow \infty]{a.s.} C_\infty$. Furthermore, if $n^2 Q_{n} \xrightarrow[n \rightarrow \infty]{L_1} C_\infty$, then $C_n \xrightarrow[n \rightarrow \infty]{L_1} C_\infty$. 
  \label{lem:cesaro_weighted}
\end{lemma}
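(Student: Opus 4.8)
The plan is to reduce this statement directly to the previous Cesàro-type lemma (Lemma~\ref{lem:cesaro}), which already handles weighted tail sums of the form $n\sum_{k=n}^\infty \lambda_k R_k$. The key observation is that one can choose the weights so as to absorb the $n^2$-normalisation appearing in the hypothesis, thereby recasting the present statement as a special case of the general result.

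Concretely, I would set $\lambda_k := k^{-2}$ and $R_k := k^2 Q_k$, so that $\lambda_k R_k = Q_k$ and hence $C_n = n\sum_{k=n}^\infty Q_k = n\sum_{k=n}^\infty \lambda_k R_k$ coincides exactly with the quantity studied in Lemma~\ref{lem:cesaro}. With this choice, the hypothesis $n^2 Q_n \to C_\infty$ (almost surely, resp.\ in $L_1$) is precisely the statement that $R_n \to R_\infty := C_\infty$ in the corresponding mode, so the convergence assumption on $R_n$ in Lemma~\ref{lem:cesaro} holds automatically.

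The only point requiring a short argument is the weight condition $n\sum_{k=n}^\infty \lambda_k \to 1$, that is $n\sum_{k=n}^\infty k^{-2} \to 1$. This follows from an integral comparison: since $x \mapsto x^{-2}$ is decreasing,
\begin{equation*}
    \frac{1}{n} = \int_n^\infty \frac{dx}{x^2} \le \sum_{k=n}^\infty \frac{1}{k^2} \le \frac{1}{n^2} + \int_n^\infty \frac{dx}{x^2} = \frac{1}{n^2} + \frac{1}{n},
\end{equation*}
so that $1 \le n\sum_{k=n}^\infty k^{-2} \le 1 + n^{-1}$, and a squeeze yields the desired limit.

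Having verified both hypotheses of Lemma~\ref{lem:cesaro}, I would simply invoke it: in the almost sure case it gives $C_n \to C_\infty$ a.s., and in the $L_1$ case it gives $C_n \to C_\infty$ in $L_1$. There is no genuine obstacle here; the entire content of the proof is the reparametrisation $\lambda_k = k^{-2}$, $R_k = k^2 Q_k$, which turns the $n^2$-scaling into the unit-mass weight condition required upstream, together with the elementary tail estimate for $\sum_{k \ge n} k^{-2}$.
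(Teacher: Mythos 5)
Your proof is correct and is exactly the paper's own argument: the paper likewise proves this lemma as a direct application of Lemma~\ref{lem:cesaro} with $R_n := n^2 Q_n$ and $\lambda_n := n^{-2}$, noting that $n\sum_{k=n}^\infty k^{-2} \to 1$. Your integral-comparison verification of that weight condition is a sound (and slightly more explicit) justification of a step the paper simply asserts.
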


\begin{proof}
  This is a direct application of Lemma~\ref{lem:cesaro}, where $R_{n} := n^2 Q_{n}$ and $\lambda_{n} := n^{-2}$, as $n \sum_{k=n}^{\infty} k^{-2} \xrightarrow[n \rightarrow \infty]{} 1$.
\end{proof}

\begin{proof}[Proof of Proposition~\ref{prop:asymptotic_variance}]
    Recall that from Corollary~\ref{cor:partition}, $\mathcal{B}_c$ and $\mathcal{B}_{1-c}$ form a partition of the set of the positive integers $\mathbb{N}^*$, so that we can write 
    \begin{equation*}
        V_N = V^{(c)}_N + V^{(1-c)}_N,
    \end{equation*}
    where $V^{(c)}_N = \sum_{\substack{K=N+1 \\ K \in \mathcal{B}_c}}^\infty S_{N,K-1}$ and $V^{(1-c)}_N = \sum_{\substack{K=N+1 \\ K \in \mathcal{B}_{1-c}}}^\infty S_{N,K-1}$. Here, we only detail the computation of $V^{(c)}_N$, as one would proceed analogously with $V^{(1-c)}_N$.

    In $V^{(c)}_N$, the sum is over the $K \in \mathcal{B}_c$. So, from Lemma~\ref{lem:form_s},
    \begin{equation*}
        S_{N,K-1} = 4 N \bigg(\frac{(n_K-2)(n_K -3)}{(m_K -1)(m_K-2)n_K(n_K -1)} c^{(1,0)}_{K} - \frac{1}{(m_{K} - 2)^2} (U^h_{K})^2 + \psi(K)\bigg).
    \end{equation*}
    Now we use Proposition~\ref{prop:form_k_m} to replace $K$ with $\kappa_c(m_K) = \big\lfloor \frac{m_K-2}{c} \big\rfloor$ and
    \begin{equation*}
        \begin{split}
            S_{N,\kappa_c(m_K)-1} &= 4 N \bigg(\frac{(\kappa_c(m_K) - m_K +2)(\kappa_c(m_K) - m_K +1)}{(m_K -1)(m_K-2)(\kappa_c(m_K) - m_K +4)(\kappa_c(m_K) - m_K +3)} c^{(1,0)}_{\kappa_c(m_K)} \\ 
            &\quad- \frac{1}{(m_{K} - 2)^2} (U^h_{\kappa_c(m_K)})^2  + \psi(\kappa_c(m_K))\bigg).
        \end{split}
    \end{equation*}
    Therefore, because for all $K \in \mathcal{B}_c$ we have $m_K = m_{K-1} + 1$, we can then transform the sum over $K$ into a sum over $m$ and 
    \begin{equation*}
        V^{(c)}_N = \sum_{\substack{K=N+1 \\ K \in \mathcal{B}_c}}^\infty S_{N,K-1} = \sum_{m=m_{N+1}}^\infty S_{N,\kappa_c(m)-1} = N \sum_{m=m_{N+1}}^\infty R_m,
    \end{equation*}
    where $R_{m} := S_{N,\kappa_c(m)-1}/N$, i.e.
    \begin{equation*}
    \begin{split}
        R_{m} = \frac{4(\kappa_c(m) - m +2)(\kappa_c(m) - m +1)}{(m -1)(m-2)(\kappa_c(m) - m +4)(\kappa_c(m) - m +3)} c^{(1,0)}_{\kappa_c(m)} - \frac{4}{(m - 2)^2} (U^h_{\kappa_c(m)})^2  + 4\psi(\kappa_c(m)).
    \end{split}
    \end{equation*}
    
    But we notice that since $\psi(\kappa_c(m)) = o(m^{-2})$, then Lemma~\ref{lem:conv_cov} and Proposition~\ref{prop:martingale} give for all $N$,
    \begin{equation*}
        m^2 R_m \xrightarrow[m \rightarrow \infty]{a.s., L_1} 4(c^{(1,0)}_\infty - (U^h_\infty)^2).
    \end{equation*}
    And since $\frac{m_{N+1}}{N} \xrightarrow[N \rightarrow \infty]{} c$ from Proposition~\ref{prop:sequences}, we find with Lemma~\ref{lem:cesaro_weighted} that
    \begin{equation*}
        V^{(c)}_N = \frac{N}{m_{N+1}} \times m_{N+1} \sum_{m=m_{N+1}}^\infty R_m \xrightarrow[N \rightarrow \infty]{a.s., L_1} \frac{4}{c}(c^{(1,0)}_\infty - (U^h_\infty)^2).
    \end{equation*}

    We can proceed likewise with $V^{(1-c)}_N$, where all the terms have $K \in \mathcal{B}_{1-c}$, to get
    \begin{equation*}
        V^{(1-c)}_N \xrightarrow[N \rightarrow \infty]{a.s., L_1} \frac{4}{1-c}(c^{(0,1)}_\infty - (U^h_\infty)^2),
    \end{equation*}
    which finally gives
    \begin{equation*}
        V_N = V^{(c)}_N + V^{(1-c)}_N \xrightarrow[N \rightarrow \infty]{a.s., L_1} V := \frac{4}{c}(c^{(1,0)}_\infty - (U^h_\infty)^2) + \frac{4}{1-c}(c^{(0,1)}_\infty - (U^h_\infty)^2).
    \end{equation*}
\end{proof}

\section{Conditional Lindeberg condition \label{app:lindeberg}}

We verify the conditional Lindeberg condition as stated by Proposition~\ref{prop:lindeberg_condition}. We use the notations defined in Appendix~\ref{app:variance}.

\begin{proposition}
    Let $\epsilon > 0$. Then the conditional Lindeberg condition is satisfied:
    \begin{equation*}
        \sum_{K=N}^\infty \mathbb{E}\big[Z_{NK}^2 \mathds{1}_{\{ |Z_{NK}| > \epsilon\}} \big| \mathcal{F}_{K+1}\big] \xrightarrow[N \rightarrow \infty]{\mathbb{P}} 0
    \end{equation*}
    \label{prop:lindeberg_condition}
\end{proposition}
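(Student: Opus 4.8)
The plan is to reduce the conditional Lindeberg sum to the second-moment computation already carried out in Proposition~\ref{prop:asymptotic_variance}, by truncating the kernel. For a threshold $M > 0$, write $h = h^{(M)} + \bar h^{(M)}$ with $h^{(M)} := h \mathds{1}_{\{|h| \le M\}}$ and $\bar h^{(M)} := h \mathds{1}_{\{|h| > M\}}$, and let $Z_{NK}^{(M)}$ and $\bar Z_{NK}^{(M)}$ denote the increments built from these kernels, so that $Z_{NK} = Z_{NK}^{(M)} + \bar Z_{NK}^{(M)}$. The idea is that the bounded part contributes negligibly to the Lindeberg event, while the remainder is controlled in $L^2$ by $\mathbb{E}[\bar h^{(M)}(Y_{\{1,2;1,2\}})^2]$, which can be made arbitrarily small.

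First I would establish a \emph{deterministic} smallness bound for the bounded increments. Using Lemma~\ref{lem:form_z} and its $\mathcal{B}_{1-c}$ analogue, the increment at level $K$ satisfies $|Z_{NK}^{(M)}| \le \tfrac{4M\sqrt N}{m_K - 2}$ when the relevant index lies in $\mathcal{B}_c$ and $|Z_{NK}^{(M)}| \le \tfrac{4M\sqrt N}{n_K - 2}$ when it lies in $\mathcal{B}_{1-c}$, because $|h^{(M)}| \le M$ forces $|U_K^{(M)}|, |\delta_K^{(M)}|, |\gamma_K^{(M)}| \le M$ pointwise. Since $m_K, n_K$ are nondecreasing with $m_N \sim cN$ and $n_N \sim (1-c)N$, this gives $\sup_{K \ge N} |Z_{NK}^{(M)}| \le \tfrac{4M\sqrt N}{\min(m_N,n_N)-2} \to 0$, so there is $N_0(M,\epsilon)$ such that for $N \ge N_0$ one has $\sup_{K\ge N}|Z_{NK}^{(M)}| \le \epsilon/2$ surely.

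Next, on the event $\{|Z_{NK}| > \epsilon\}$ with $N \ge N_0$, the triangle inequality yields $|\bar Z_{NK}^{(M)}| > \epsilon/2$; combining $Z_{NK}^2 \le 2(Z_{NK}^{(M)})^2 + 2(\bar Z_{NK}^{(M)})^2 \le \tfrac{\epsilon^2}{2} + 2(\bar Z_{NK}^{(M)})^2$ with $(\bar Z_{NK}^{(M)})^2 > \epsilon^2/4$ gives the pointwise bound $Z_{NK}^2 \mathds{1}_{\{|Z_{NK}|>\epsilon\}} \le 4 (\bar Z_{NK}^{(M)})^2$. Taking conditional expectations and summing, for $N \ge N_0$,
\begin{equation*}
\sum_{K=N}^\infty \mathbb{E}\big[Z_{NK}^2 \mathds{1}_{\{|Z_{NK}|>\epsilon\}} \,\big|\, \mathcal{F}_{K+1}\big] \le 4 \sum_{K=N}^\infty \mathbb{E}\big[(\bar Z_{NK}^{(M)})^2 \,\big|\, \mathcal{F}_{K+1}\big] =: 4\, V_N^{(M)}.
\end{equation*}
Since $\mathbb{E}[\bar h^{(M)}(Y_{\{1,2;1,2\}})^2] \le \mathbb{E}[h(Y_{\{1,2;1,2\}})^2] < \infty$, Proposition~\ref{prop:asymptotic_variance} applies to $\bar h^{(M)}$ and gives $V_N^{(M)} \xrightarrow[N\to\infty]{\mathbb{P}} V^{(M)}$, where $V^{(M)}$ has the form of $V$ but is built from $\bar h^{(M)}$. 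Using $V^{(M)} \ge 0$, Cauchy–Schwarz, and the tower property, $0 \le \mathbb{E}[V^{(M)}] \le (\tfrac4c + \tfrac{4}{1-c})\,\mathbb{E}[h(Y_{\{1,2;1,2\}})^2 \mathds{1}_{\{|h(Y_{\{1,2;1,2\}})|>M\}}]$, which tends to $0$ as $M \to \infty$ by dominated convergence. A double-limit argument then concludes: for fixed $\eta > 0$ and $N \ge N_0(M,\epsilon)$, the surely-valid bound above together with Markov's inequality and convergence in probability give $\limsup_N \mathbb{P}\big(\sum_K \mathbb{E}[Z_{NK}^2 \mathds{1}_{\{|Z_{NK}|>\epsilon\}} | \mathcal{F}_{K+1}] > \eta\big) \le \mathbb{P}(4V^{(M)} \ge \eta) \le 4\mathbb{E}[V^{(M)}]/\eta$, and letting $M \to \infty$ makes the right-hand side vanish, yielding the desired convergence in probability.

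The main obstacle is that only the second moment of $h$ is assumed finite, so the usual route of dominating the truncated contribution by a fourth moment is unavailable. The truncation splits the difficulty cleanly: the bounded part is deterministically negligible uniformly in $K \ge N$ thanks to the $\sqrt N / m_N \to 0$ growth, while the unbounded remainder is handled entirely by the $L^2$ machinery of Proposition~\ref{prop:asymptotic_variance}, whose limit is made small by taking $M$ large.
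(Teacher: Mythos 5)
Your proposal takes a genuinely different route from the paper's and the overall strategy is sound. The paper never truncates the kernel: it works directly with the representation $Z_{N,K-1} = \sqrt{N}\,\tfrac{2}{m_K-2}(U_K - \delta_K)$ from Lemma~\ref{lem:form_z}, splits the indicator $\mathds{1}_{\{|Z_{N,K-1}|>\epsilon\}}$ into indicators on $|U_{\kappa_c(m)}|$ and $|\delta_{\kappa_c(m)}|$ exceeding thresholds that grow like $\sqrt{m}$, and then kills the four resulting expectations by uniform-integrability arguments: $U_K^2$ is uniformly integrable because $U_K \to U_\infty$ in $L^2$ (Proposition~\ref{prop:martingale}), $\mathbb{E}[\delta_K^2\,|\,\mathcal{F}_K]$ is a backward martingale, and $\delta_K \overset{\mathcal{D}}{=} A_K$ (Lemma~\ref{lem:delta_exch}) with $A_K$ itself an $L^2$-convergent backward martingale; Lemma~\ref{lem:cesaro_expectation} converts termwise decay into decay of the whole sum. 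Your truncation argument replaces that machinery (Lemmas~\ref{lem:uniform_integrability_sq}--\ref{lem:delta_exch}) by a single reuse of Proposition~\ref{prop:asymptotic_variance} applied to the tail kernel $\bar h^{(M)}$: the bounded part is surely negligible because $\sqrt{N}/m_N \to 0$, and the tail part has asymptotic variance of order $\mathbb{E}[h^2\mathds{1}_{\{|h|>M\}}]$, which vanishes as $M \to \infty$. This is the classical truncation proof of a Lindeberg condition; it is more modular and reuses existing results, at the price of a double limit in $N$ and $M$. Your individual estimates are correct: the pointwise bound $Z_{NK}^2\mathds{1}_{\{|Z_{NK}|>\epsilon\}} \le 4(\bar Z^{(M)}_{NK})^2$ for $N \ge N_0$, the nonnegativity of $V^{(M)}$, the Cauchy--Schwarz bound on $\mathbb{E}[V^{(M)}]$, and the final limsup/Markov chain.

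One step needs patching: a filtration mismatch when you invoke Proposition~\ref{prop:asymptotic_variance}. Your $V_N^{(M)}$ conditions on $\mathcal{F}_{K+1} = \sigma\big((U^h_{kl}, k \ge m_{K+1}, l \ge n_{K+1})\big)$, the filtration generated by the $U$-statistics of $h$, but the proposition applied to the kernel $\bar h^{(M)}$ concerns conditioning on the filtration generated by the $U$-statistics of $\bar h^{(M)}$, a different $\sigma$-field; so the convergence $V_N^{(M)} \xrightarrow{\mathbb{P}} V^{(M)}$ does not follow from the proposition as stated. Two easy fixes exist. (i) Bypass conditioning entirely: by Markov's inequality and the tower property,
\begin{equation*}
    \mathbb{P}\Big(\sum_{K=N}^\infty \mathbb{E}\big[Z_{NK}^2 \mathds{1}_{\{|Z_{NK}|>\epsilon\}}\,\big|\,\mathcal{F}_{K+1}\big] > \eta\Big) \le \frac{4}{\eta}\sum_{K=N}^\infty \mathbb{E}\big[(\bar Z_{NK}^{(M)})^2\big],
\end{equation*}
and the right-hand side is filtration-free; it converges to $4\mathbb{E}[V^{(M)}]/\eta$ by the $L^1$ convergence established in the proof of Proposition~\ref{prop:asymptotic_variance} (the proof gives a.s.\ and $L^1$ convergence, not merely convergence in probability, and this is where $L^1$ is needed, since convergence in probability alone would not let you pass to expectations). (ii) Alternatively, observe that the proof of Proposition~\ref{prop:asymptotic_variance} uses only the conditional exchangeability of $Y$ given $\mathcal{F}_K$, established in the proof of Lemma~\ref{lem:cond_exp_quadruplet}; this is a property of $Y$ and $\mathcal{F}_K$ alone, independent of which kernel is applied afterwards, so the proposition's argument runs verbatim for the kernel $\bar h^{(M)}$ conditioned on the $h$-filtration. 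With either patch your argument is complete.
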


The proof relies on the four following lemmas.
\begin{lemma}
  Let $(Q_{n})_{n \ge 1}$ be a sequence of random variables. Set $C_n := n\sum_{k=n}^\infty Q_{k}$. If $n^2 \mathbb{E}\big[|Q_{n}|\big] \xrightarrow[n \rightarrow \infty]{} 0$, then $C_n \xrightarrow[n \rightarrow \infty]{\mathbb{P}} 0$.
  \label{lem:cesaro_expectation}
\end{lemma}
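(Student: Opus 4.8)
The plan is to prove the (formally stronger) statement that $C_n \to 0$ in $L_1$, from which convergence in probability follows at once by Markov's inequality. So it suffices to control $\mathbb{E}[|C_n|]$.

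The cleanest route is to notice that the hypothesis $n^2 \mathbb{E}[|Q_n|] \to 0$ is nothing but $\mathbb{E}[|n^2 Q_n - 0|] \to 0$, i.e. $n^2 Q_n \xrightarrow{L_1} 0$. This is precisely the $L_1$ hypothesis of Lemma~\ref{lem:cesaro_weighted} with limit $C_\infty = 0$, a constant and hence trivially integrable. Applying that lemma gives directly $C_n = n \sum_{k=n}^\infty Q_k \xrightarrow{L_1} 0$, and a fortiori $C_n \xrightarrow{\mathbb{P}} 0$. In this sense the statement is just the special case of Lemma~\ref{lem:cesaro_weighted} at limit zero, composed with the implication $L_1 \Rightarrow \mathbb{P}$.

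If one prefers a self-contained argument, I would first bound, by the triangle inequality and Tonelli's theorem (all summands nonnegative), $\mathbb{E}[|C_n|] \le n \sum_{k=n}^\infty \mathbb{E}[|Q_k|]$; the interchange of expectation and infinite sum, as well as the well-definedness of $C_n$, are justified because $\mathbb{E}[|Q_k|] = o(k^{-2})$ is summable, so $\sum_k |Q_k| < \infty$ almost surely. Writing $\mathbb{E}[|Q_k|] = \epsilon_k k^{-2}$ with $\epsilon_k \to 0$, fix $\delta > 0$ and choose $N_0$ with $\epsilon_k \le \delta$ for all $k \ge N_0$; then for $n \ge N_0$ one gets $n \sum_{k=n}^\infty \mathbb{E}[|Q_k|] \le \delta \cdot n \sum_{k=n}^\infty k^{-2}$. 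Since $n \sum_{k=n}^\infty k^{-2} \to 1$ (the elementary fact already invoked for Lemma~\ref{lem:cesaro_weighted}), the right-hand side is at most $2\delta$ for $n$ large, and as $\delta$ is arbitrary we conclude $\mathbb{E}[|C_n|] \to 0$.

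There is no genuine obstacle here: the only points requiring (routine) care are the interchange of expectation and infinite summation, handled by Tonelli thanks to nonnegativity, and the elementary Cesàro-type estimate comparing the rate $\mathbb{E}[|Q_k|] = o(k^{-2})$ against the summable benchmark $k^{-2}$. Both are exactly the ingredients already packaged in Lemmas~\ref{lem:cesaro} and~\ref{lem:cesaro_weighted}, which is why I would simply cite Lemma~\ref{lem:cesaro_weighted} rather than redo the estimate by hand.
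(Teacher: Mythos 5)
Your proposal is correct and follows essentially the same route as the paper: the paper likewise deduces $\mathbb{E}[|C_n|] \le n\sum_{k\ge n}\mathbb{E}[|Q_k|] \to 0$ via the triangle inequality and Lemma~\ref{lem:cesaro_weighted}, then concludes with Markov's inequality. Your observation that the hypothesis is exactly the $L_1$ clause of Lemma~\ref{lem:cesaro_weighted} with limit $0$ (applied to $Q_n$ directly rather than to the deterministic sequence $\mathbb{E}[|Q_k|]$) is a harmless repackaging of the same argument, justified since the $L_1$ part of Lemma~\ref{lem:cesaro} is proved independently of the almost sure hypothesis.
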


\begin{proof}
  Lemma~\ref{lem:cesaro_weighted} and the triangular inequality give $\mathbb{E}\big[|C_n|\big] \le n \sum_{k=n}^\infty \mathbb{E}\big[|Q_{k}|\big] \xrightarrow[n \rightarrow \infty]{} 0$. Let some $\epsilon > 0$, then Markov's inequality ensures that
  \begin{equation*}
    \mathbb{P}(|C_n| > \epsilon) \le \frac{\mathbb{E}\big[|C_n|\big]}{\epsilon} \xrightarrow[n \rightarrow \infty]{} 0.
  \end{equation*}
\end{proof}

\begin{lemma}
  For sequences of random variables $T_n$ and sets $B_n$, if $T_n \xrightarrow[n \rightarrow \infty]{L_2} T$ and $\mathds{1}(B_n) \xrightarrow[n \rightarrow \infty]{\mathbb{P}} 0$, then $\mathbb{E}[T_n^2 \mathds{1}(B_n)] \xrightarrow[n \rightarrow \infty]{} 0$.
  \label{lem:uniform_integrability_sq}
\end{lemma}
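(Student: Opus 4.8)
The plan is to control the mass of $U_n^2$ sitting on the shrinking sets $B_n$ by separating the genuine "bulk" of $U_n$ from its deviation to the $L^2$-limit $U$. As a preliminary observation, since each $\mathds{1}(B_n)$ is an indicator, the hypothesis $\mathds{1}(B_n) \xrightarrow[n \rightarrow \infty]{\mathbb{P}} 0$ is simply the statement $\mathbb{P}(B_n) \to 0$; this is the only way I will use that assumption.

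The key step is the elementary pointwise bound $U_n^2 \le 2(U_n - U)^2 + 2U^2$, which upon integrating against $\mathds{1}(B_n) \le 1$ yields
\[
\mathbb{E}[U_n^2 \mathds{1}(B_n)] \le 2\,\mathbb{E}[(U_n - U)^2] + 2\,\mathbb{E}[U^2 \mathds{1}(B_n)].
\]
The first term on the right goes to $0$ immediately, since it is exactly (twice) the $L^2$-convergence hypothesis $U_n \xrightarrow[n \rightarrow \infty]{L_2} U$. So it remains to handle $\mathbb{E}[U^2 \mathds{1}(B_n)]$, where now $U$ is a \emph{fixed} random variable.

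For this second term I would use that the $L^2$-limit satisfies $U \in L^2$, so $U^2$ is integrable, and invoke the absolute continuity of the Lebesgue integral: for every $\epsilon > 0$ there exists $\delta > 0$ such that any event $A$ with $\mathbb{P}(A) < \delta$ satisfies $\mathbb{E}[U^2 \mathds{1}(A)] < \epsilon$. Since $\mathbb{P}(B_n) \to 0$, for $n$ large enough $\mathbb{P}(B_n) < \delta$, and hence $\mathbb{E}[U^2 \mathds{1}(B_n)] < \epsilon$. Letting $\epsilon \to 0$ and combining with the first bound gives $\mathbb{E}[U_n^2 \mathds{1}(B_n)] \to 0$, as claimed.

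The one delicate point, which is the main obstacle, is that $\mathds{1}(B_n)$ converges to $0$ only in probability and not almost surely, so one cannot simply apply dominated convergence to $U^2 \mathds{1}(B_n)$; the absolute-continuity (equivalently, single-function uniform integrability) argument is exactly what circumvents this. As an alternative route to the same conclusion, one may note that $L^2$ convergence forces $\{U_n^2\}$ to be uniformly integrable, then split $\mathbb{E}[U_n^2 \mathds{1}(B_n)]$ at a truncation level $M$, bounding the truncated part by $M\,\mathbb{P}(B_n)$ (which vanishes for fixed $M$) and the tail part by $\sup_n \mathbb{E}[U_n^2 \mathds{1}\{U_n^2 > M\}]$ (which is small for large $M$ by uniform integrability).
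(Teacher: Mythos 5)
Your proof is correct, but it takes a different route from the paper's. You decompose around the $L^2$-limit via $U_n^2 \le 2(U_n-U)^2 + 2U^2$, kill the first term directly with the $L^2$-convergence hypothesis, and handle the second with absolute continuity of the integral of the single fixed integrable function $U^2$. The paper never introduces $U$ into the estimate at all: it truncates $U_n^2$ itself at a level $a$, writing
\begin{equation*}
  \mathbb{E}\big[U_n^2 \mathds{1}(B_n)\big] \le \mathbb{E}\big[U_n^2 \mathds{1}(U_n^2 > a)\big] + a\,\mathbb{P}(B_n),
\end{equation*}
and controls the tail term by uniform integrability of the family $(U_n^2)_{n\ge 1}$ — exactly the ``alternative route'' you sketch in your last paragraph. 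The trade-off: your primary argument is slightly more self-contained, since it needs only the definition of $L^2$ convergence plus a standard fact about one integrable function, whereas the paper's argument silently relies on the (true, but not entirely trivial) implication that $U_n \to U$ in $L^2$ forces $(U_n^2)$ to be uniformly integrable. On the other hand, the paper's formulation buys reusability: its proof really establishes the more general statement ``$(U_n^2)$ uniformly integrable and $\mathbb{P}(B_n)\to 0$ imply $\mathbb{E}[U_n^2\mathds{1}(B_n)]\to 0$,'' and this is precisely what lets the paper dispose of the subsequent lemma for backward martingales (Lemma~\ref{lem:uniform_integrability_mart}) by saying the proof is ``similar,'' with uniform integrability there supplied by the martingale convergence theorem rather than by an $L^2$ limit. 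Your decomposition, being anchored to the specific limit $U$, would need to be rewritten in that setting.
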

\begin{proof}
  Note that for all $n$, $a > 0$,
  \begin{equation*}
      \begin{split}
        \mathbb{E}\big[T_n^2 \mathds{1}(B_n)\big] &= \mathbb{E}\big[T_n^2 \mathds{1}(B_n)\mathds{1}(T_n^2 > a)\big] + \mathbb{E}\big[T_n^2 \mathds{1}(B_n)\mathds{1}(T_n^2 \le a)\big] \\
        &\le \mathbb{E}\big[T_n^2 \mathds{1}(T_n^2 > a)\big] + \mathbb{E}\big[a \mathds{1}(B_n)\big] \\
        &\le \mathbb{E}\big[T_n^2 \mathds{1}(T_n^2 > a)\big] + a \mathbb{P}(B_n)
      \end{split}
  \end{equation*}
  Let $\epsilon > 0$. $T_n \xrightarrow[n \rightarrow \infty]{L_2} T$, so $(T_n^2)_{n \ge 1}$ is uniformly integrable and there exists $a > 0$ such that $\mathbb{E}[T_n^2 \mathds{1}(T_n^2 > a)] \le \sup_k \mathbb{E}[T_k^2 \mathds{1}(T_k^2 > a)] \le \frac{\epsilon}{2}$. Moreover, $\mathds{1}(B_n) \xrightarrow[n \rightarrow \infty]{\mathbb{P}} 0$, which translates to $\mathbb{P}(B_n) \xrightarrow[n \rightarrow \infty]{} 0$ and there exists an integer $n_0$ such that for all $n > n_0$, $\mathbb{P}(B_n) \le \frac{\epsilon}{2a}$. Choosing such a real number $a$, we can always find an integer $n_0$ such that for $n > n_0$, we have $\mathbb{E}[T_n^2 \mathds{1}(B_n)] \le \epsilon$.
\end{proof}

\begin{lemma}
  For sequences of random variables $M_n$ and sets $B_n$, if $(M_n)_{n \ge 1}$ is a backward martingale with respect to some filtration and $\mathds{1}(B_n) \xrightarrow[n \rightarrow \infty]{\mathbb{P}} 0$, then $\mathbb{E}[M_n \mathds{1}(B_n)] \xrightarrow[n \rightarrow \infty]{} 0$.
  \label{lem:uniform_integrability_mart}
\end{lemma}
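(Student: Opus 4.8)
The plan is to reproduce the truncation argument of Lemma~\ref{lem:uniform_integrability_sq}, but to replace the uniform integrability of $(U_n^2)$, which was obtained there from $L^2$ convergence, by the uniform integrability of the backward martingale $(M_n)_{n \ge 1}$ itself. The crucial input is Theorem~\ref{th:martingale_convergence}: a backward martingale is always uniformly integrable, so that $\sup_k \mathbb{E}\big[|M_k| \mathds{1}(|M_k| > a)\big] \xrightarrow[a \rightarrow \infty]{} 0$. This is precisely the place where the martingale hypothesis enters, since $M_n$ is only assumed integrable and not square-integrable; one therefore cannot deduce the uniform integrability from an $L^2$ bound as was done in the previous lemma.

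First I would bound the quantity of interest by its absolute moment, $\big|\mathbb{E}[M_n \mathds{1}(B_n)]\big| \le \mathbb{E}\big[|M_n| \mathds{1}(B_n)\big]$, and then, for an arbitrary threshold $a > 0$, split the expectation according to whether $|M_n|$ exceeds $a$:
\begin{equation*}
    \mathbb{E}\big[|M_n| \mathds{1}(B_n)\big] \le \mathbb{E}\big[|M_n| \mathds{1}(|M_n| > a)\big] + a\, \mathbb{P}(B_n) \le \sup_k \mathbb{E}\big[|M_k| \mathds{1}(|M_k| > a)\big] + a\, \mathbb{P}(B_n),
\end{equation*}
where on the truncated part $\{|M_n| \le a\}$ I simply use $|M_n| \le a$ together with $\mathbb{E}[\mathds{1}(B_n)] = \mathbb{P}(B_n)$.

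Finally I would conclude with the usual two-step choice of constants: given $\epsilon > 0$, uniform integrability lets me fix $a$ so that the supremum term is at most $\epsilon/2$; then, since $\mathds{1}(B_n) \xrightarrow[n \rightarrow \infty]{\mathbb{P}} 0$ is equivalent to $\mathbb{P}(B_n) \xrightarrow[n \rightarrow \infty]{} 0$, I can find $n_0$ such that $a\, \mathbb{P}(B_n) \le \epsilon/2$ for all $n > n_0$, whence $\mathbb{E}\big[|M_n| \mathds{1}(B_n)\big] \le \epsilon$ and therefore $\mathbb{E}[M_n \mathds{1}(B_n)] \xrightarrow[n \rightarrow \infty]{} 0$. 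There is no genuine obstacle beyond correctly invoking the uniform integrability of $(M_n)$ rather than of $(M_n^2)$, so the argument mirrors the preceding lemma almost verbatim.
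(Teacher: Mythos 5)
Your proof is correct and follows essentially the same route as the paper: the paper's own proof likewise invokes Theorem~\ref{th:martingale_convergence} to obtain the uniform integrability of the backward martingale $(M_n)_{n \ge 1}$ and then repeats the truncation argument of Lemma~\ref{lem:uniform_integrability_sq}. Your write-up is in fact slightly more careful, since passing to $|M_n|$ handles the fact that $M_n$ may be signed, a detail the paper leaves implicit in the word ``similar.''
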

\begin{proof}
    We notice that from Theorem~\ref{th:martingale_convergence}, $(M_n)_{n \ge 1}$ is uniformly integrable, then the proof is similar to that of Lemma~\ref{lem:uniform_integrability_sq}.
\end{proof}

\begin{lemma}
  Set $A_K := m_K^{-1} \binom{n_K}{2}^{-1} \sum_{\substack{2 \le i_2 \le m_K+1 \\1 \le j_1 < j_2 \le n_K}} X_{\{1, i_2; j_1, j_2\}}$. If $K \in \mathcal{B}_c$ (see Definition~\ref{def:partition_sets}), then $A_K \overset{\mathcal{D}}{=} \delta_{K}$,where $\delta_{K}$ is defined in Lemma \ref{lem:form_z}.
  \label{lem:delta_exch}
\end{lemma}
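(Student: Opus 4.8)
The plan is to prove the identity purely from the row-column exchangeability of $Y$ together with the symmetry~\eqref{eq:symmetry} of the kernel $h$; no estimate is needed, only a bookkeeping of indices. The key observation is that $A_K$ and $\delta_K$ have the same combinatorial shape. Each is a normalized sum of kernel values $X_{[\cdot,\cdot;j_1,j_2]}$ in which one row is held fixed (the \emph{special} row, which is $1$ for $A_K$ and $m_K$ for $\delta_K$), the other (\emph{partner}) row sweeps a block of consecutive indices, and the column pair $\{j_1,j_2\}$ ranges over all pairs in $\{1,\dots,n_K\}$. Since $K\in\mathcal{B}_c$, the special row $m_K$ of $\delta_K$ is exactly the row appended in passing from size $K-1$ to size $K$, which is what makes $\delta_K$ the natural quantity to compare with a statistic anchored at a single fixed row.

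First I would exhibit a finite row permutation $\sigma_1$ (extended by the identity outside the relevant block, hence admissible for the RCE property) that carries the index pattern of $\delta_K$ onto that of $A_K$: take $\sigma_1$ to be the cyclic shift that sends the special row to $1$ and maps the partner block of $\delta_K$ bijectively and order-preservingly onto the partner block of $A_K$. Writing $\Phi=(\sigma_1,\mathrm{id})$ and using $(\Phi Y)_{ab}=Y_{\sigma_1(a)\,b}$, each quadruplet of $\delta_K$ evaluated at $\Phi Y$ becomes a quadruplet of $Y$ whose rows are $\{\sigma_1(i_1),1\}$, with the column pairs untouched. Then I would invoke~\eqref{eq:symmetry}, which asserts precisely that swapping the two rows of a quadruplet leaves $h$ unchanged, to move the fixed row from the second slot (its position in $\delta_K$) into the first slot (its position in $A_K$): $h(Y_{\{\sigma_1(i_1),1;j_1,j_2\}})=h(Y_{\{1,\sigma_1(i_1);j_1,j_2\}})=X_{[1,\sigma_1(i_1);j_1,j_2]}$. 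After checking that the summation ranges and the normalizing constants match term for term, this identifies $\delta_K(\Phi Y)$ with $A_K(Y)$ as a genuine pointwise equality of functions of $Y$, not merely an equality in law.

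The conclusion is then immediate: row-column exchangeability gives $\Phi Y\overset{\mathcal{D}}{=}Y$, so for the measurable map $\delta_K$ we have $\delta_K(Y)\overset{\mathcal{D}}{=}\delta_K(\Phi Y)=A_K(Y)$, which is the claim. The main obstacle is the index bookkeeping in the construction of $\sigma_1$: one must pin down the permutation so that the partner block of $\delta_K$ is sent onto the partner block of $A_K$ while the special row lands on $1$, and verify that the two resulting sums carry the same number of terms and the same combinatorial normalization. Once $\sigma_1$ is fixed, the exchangeability of $Y$ and the symmetry~\eqref{eq:symmetry} of $h$ do all the remaining work, so I expect this matching of ranges and normalizations to be essentially the only delicate step.
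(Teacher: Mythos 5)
Your strategy is exactly the paper's own proof: use the symmetry~\eqref{eq:symmetry} to place the special row of $\delta_K$ in the first slot, then apply the cyclic row shift $\sigma_1(i)=i+1$ for $i<m_K$, $\sigma_1(m_K)=1$ (identity on the columns), and invoke row--column exchangeability of $Y$ to convert the resulting pointwise identity into equality in law. So in spirit there is nothing to add.

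However, the one step you defer --- ``checking that the summation ranges and the normalizing constants match term for term'' --- does not actually go through for $A_K$ as written in the statement, and this is worth flagging. The partner block of $\delta_K$ is $\{1,\dots,m_K-1\}$, i.e.\ $m_K-1$ rows with prefactor $(m_K-1)^{-1}$, whereas the statement's $A_K$ sums over the partner block $\{2,\dots,m_K+1\}$, i.e.\ $m_K$ rows with prefactor $m_K^{-1}$. These blocks have different cardinalities, so the order-preserving bijection you invoke cannot exist, and the image of $\delta_K$ under the shift is
\begin{equation*}
(m_K-1)^{-1}\binom{n_K}{2}^{-1}\sum_{\substack{2\le i_2\le m_K\\ 1\le j_1<j_2\le n_K}}X_{[1,i_2;j_1,j_2]},
\end{equation*}
not the displayed $A_K$. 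The discrepancy is not innocuous: taking $Y_{ij}=\xi_i$ with $\xi_i$ i.i.d.\ standard Gaussian and $h$ the average of the four entries of the quadruplet, the statement's $A_K$ is $\mathcal{N}\big(0,\tfrac14+\tfrac{1}{4m_K}\big)$ while $\delta_K$ is $\mathcal{N}\big(0,\tfrac14+\tfrac{1}{4(m_K-1)}\big)$, so the two are not equal in law. This is an off-by-one typo in the lemma's statement rather than a defect of your approach: the paper's own proof concludes with precisely the corrected sum above, and your argument, carried out, proves that corrected identity. But since you (rightly) identified the matching of ranges and normalizations as the only delicate step, you should have actually performed it --- doing so would have caught the mismatch instead of asserting it resolves.
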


\begin{proof}
  Remember that if $K \in \mathcal{B}_c$, then by symmetry of $h$, $\delta_{K} = (m_{K}-1)^{-1} \binom{n_{K}}{2}^{-1} \sum\limits_{\substack{1 \le i_2 \le m_{K}-1 \\ 1 \le j_1 < j_2 \le n_{K}}} X_{\{m_K, i_2; j_1, j_2\}}$.
  The exchangeability of $Y$ says that all permutations on the rows and the columns of $Y$ leave its distribution unchanged, hence for all $(\sigma_1, \sigma_2) \in \mathbb{S}_{m_K} \times \mathbb{S}_{n_K}$, we have 
  \begin{equation*}
      \delta_{K} \overset{\mathcal{L}}{=} (m_{K}-1)^{-1} \binom{n_{K}}{2}^{-1} \sum\limits_{\substack{1 \le i_2 \le m_{K}-1 \\ 1 \le j_1 < j_2 \le n_{K}}} X_{\{\sigma_1(m_K), \sigma_1(i_2); \sigma_2(j_1), \sigma_2(j_2)\}}.
  \end{equation*}
  Consider $\sigma_2$ to be the identity and $\sigma_1 \in \mathbb{S}_{m_K}$ the permutation defined by:
  \begin{itemize}
      \item $\sigma_1(i) = i+1$ if $i < m_K$,
      \item $\sigma_1(m_K) = 1$,
      \item $\sigma_1(i) = i$ if $i > m_K$.
  \end{itemize}
  Then $A_K = (m_{K}-1)^{-1} \binom{n_{K}}{2}^{-1} \sum\limits_{\substack{1 \le i_2 \le m_{K}-1 \\ 1 \le j_1 < j_2 \le n_{K}}} X_{\{\sigma_1(m_K), \sigma_1(i_2); \sigma_2(j_1), \sigma_2(j_2)\}}$, hence $A_K \overset{\mathcal{L}}{=} \delta_K$.
\end{proof}

\begin{proof}[Proof of Proposition~\ref{prop:lindeberg_condition}]
    Similarly to the proof of the Proposition~\ref{prop:asymptotic_variance}, we can verify the conditional Lindeberg condition by decomposing the sum along with $K+1 \in \mathcal{B}_c$ and $K+1 \in \mathcal{B}_{1-c}$ (Corollary~\ref{cor:partition}), so here we only consider $\sum_{\substack{K=N+1 \\ K \in \mathcal{B}_c}}^\infty \mathbb{E}\big[Z_{N,K-1}^2 \mathds{1}_{\{ |Z_{N,K-1}| > \epsilon\}} | \mathcal{F}_{K}\big]$.

    Like previously, using Proposition~\ref{prop:form_k_m}, we can transform the sum over $K$ into a sum over $m$:
    \begin{equation*}
        \sum_{\substack{K=N+1 \\ K \in \mathcal{B}_c}}^\infty \mathbb{E}\big[Z_{N,K-1}^2 \mathds{1}_{\{ |Z_{N,K-1}| > \epsilon\}} | \mathcal{F}_{K}\big] = \sum_{m=m_{N+1}}^\infty \mathbb{E}\big[Z_{N,\kappa_c(m)-1}^2 \mathds{1}_{\{ |Z_{N,\kappa_c(m)-1}| > \epsilon\}} | \mathcal{F}_{\kappa_c(m)}\big],
    \end{equation*}
    where $\kappa_c(m) = \lfloor \frac{m-2}{c} \rfloor$.
    
    We remark that using Lemma~\ref{lem:form_z}, for $m \ge m_{N+1} = m_N + 1> c(N+1)+ 2$,
    \begin{equation*}
        \begin{split}
           \mathds{1}_{\{ |Z_{N,\kappa_c(m)-1}| > \epsilon\}} &\le \mathds{1}_{\big\{ \frac{2\sqrt{N}}{m-2}|U^h_{\kappa_c(m)} - \delta_{\kappa_c(m)}| > \epsilon\big\}}\\
            &\le \mathds{1}_{\big\{ |U^h_{\kappa_c(m)} - \delta_{\kappa_c(m)}| > \frac{m-2}{2\sqrt{\frac{m-2}{c}}}\epsilon\big\}} \\
            &\le \mathds{1}_{\big\{ |U^h_{\kappa_c(m)}| > \frac{\sqrt{c(m-2)}}{4}\epsilon\big\}} + \mathds{1}_{\big\{ |\delta_{\kappa_c(m)}| > \frac{\sqrt{c(m-2)}}{4}\epsilon\big\}}.
        \end{split}
    \end{equation*}
    So, using Lemma~\ref{lem:form_z} again and the identity $(U^h_{\kappa_c(m)} - \delta_{\kappa_c(m)})^2 \le 2 \left((U^h_{\kappa_c(m)})^2 + \delta_{\kappa_c(m)}^2\right)$, we get for $m \ge m_{N+1}$,
    \begin{equation*}
    \begin{split}
        \mathbb{E}&\big[Z_{N,\kappa_c(m)-1}^2 \mathds{1}_{\{ |Z_{N,\kappa_c(m)-1}| > \epsilon\}} | \mathcal{F}_{\kappa_c(m)}\big] \\ 
        &\le \frac{8N}{(m - 2)^2} \mathbb{E}\bigg[\left((U^h_{\kappa_c(m)})^2 + \delta_{\kappa_c(m)}^2\right)  \bigg(\mathds{1}_{\big\{ |U^h_{\kappa_c(m)}| > \frac{\sqrt{c(m-2)}}{4}\epsilon\big\}} + \mathds{1}_{\big\{ |\delta_{\kappa_c(m)}| > \frac{\sqrt{c(m-2)}}{4}\epsilon\big\}}\bigg) \bigg| \mathcal{F}_{\kappa_c(m)}\bigg].
    \end{split}
    \end{equation*}
    This inequality and Lemma~\ref{lem:cesaro_expectation} imply that a sufficient condition to have the conditional Lindeberg condition is
    \begin{equation}
       \mathbb{E}\bigg[\left((U^h_{\kappa_c(m)})^2 + \delta_{\kappa_c(m)}^2\right)  \bigg(\mathds{1}_{\big\{ |U^h_{\kappa_c(m)}| > \frac{\sqrt{c(m-2)}}{4}\epsilon\big\}} + \mathds{1}_{\big\{ |\delta_{\kappa_c(m)}| > \frac{\sqrt{c(m-2)}}{4}\epsilon\big\}}\bigg)\bigg] \xrightarrow[m \rightarrow \infty]{} 0.
        \label{eq:e1_5}
    \end{equation}
    Next, we prove that this condition is satisfied.
    
    First, note that from Markov's inequality,
    \begin{equation*}
        \mathbb{P} \left( |U^h_{\kappa_c(m)}| > \frac{\sqrt{c(m-2)}}{4}\epsilon \right) \le \frac{4 \mathbb{E}[|U^h_{\kappa_c(m)}|]}{\epsilon \sqrt{c(m-2)}} \xrightarrow[m \rightarrow \infty]{} 0
    \end{equation*}
    and 
    \begin{equation*}
        \mathbb{P} \left( |\delta_{\kappa_c(m)}| > \frac{\sqrt{c(m-2)}}{4}\epsilon \right) \le \frac{4 \mathbb{E}[|\delta_{\kappa_c(m)}|]}{\epsilon \sqrt{c(m-2)}} \xrightarrow[m \rightarrow \infty]{} 0.
    \end{equation*}

    Now, remember that from Proposition~\ref{prop:martingale}, $U^h_K \xrightarrow[K \rightarrow \infty]{L_2} U^h_\infty$, therefore $U^h_{\kappa_c(m)} \xrightarrow[m \rightarrow \infty]{L_2} U^h_\infty$ and Lemma~\ref{lem:uniform_integrability_sq} can be applied, which gives
    \begin{equation}
        \mathbb{E}\bigg[(U^h_{\kappa_c(m)})^2  \bigg(\mathds{1}_{\big\{ |U^h_{\kappa_c(m)}| > \frac{\sqrt{c(m-2)}}{4}\epsilon\big\}} + \mathds{1}_{\big\{ |\delta_{\kappa_c(m)}| > \frac{\sqrt{c(m-2)}}{4}\epsilon\big\}}\bigg)\bigg] \xrightarrow[m \rightarrow \infty]{} 0.
        \label{eq:e1_1}
    \end{equation}

    Likewise, we calculated $\mathbb{E}[\delta_K^2 | \mathcal{F}_K ]$ in the proof of Lemma~\ref{lem:form_s}. The application of Lemma~\ref{lem:conv_cov} shows that $\mathbb{E}[\delta_{\kappa_c(m)}^2 | \mathcal{F}_{\kappa_c(m)} ]$ is a backward martingale. It follows from Lemma~\ref{lem:uniform_integrability_mart} that
    \begin{equation}
        \mathbb{E}\bigg[\delta_{\kappa_c(m)}^2  \mathds{1}_{\big\{ |U^h_{\kappa_c(m)}| > \frac{\sqrt{c(m-2)}}{4}\epsilon\big\}}\bigg] = \mathbb{E}\bigg[\mathbb{E}[\delta_{\kappa_c(m)}^2 | \mathcal{F}_{\kappa_c(m)} ] \mathds{1}_{\big\{ |U^h_{\kappa_c(m)}| > \frac{\sqrt{c(m-2)}}{4}\epsilon\big\}}\bigg] \xrightarrow[m \rightarrow \infty]{} 0.
        \label{eq:e1_2}
    \end{equation}
    
    Finally, applying Lemma~\ref{lem:delta_exch}, we obtain
    \begin{equation}
        \mathbb{E}\bigg[\delta_{\kappa_c(m)}^2\mathds{1}_{\big\{ |\delta_{\kappa_c(m)}| > \frac{\sqrt{c(m-2)}}{4}\epsilon\big\}}\bigg] = \mathbb{E}\bigg[A_{\kappa_c(m)}^2\mathds{1}_{\big\{ |A_{\kappa_c(m)}| > \frac{\sqrt{c(m-2)}}{4}\epsilon\big\}}\bigg],
        \label{eq:e1_3}
    \end{equation}
    where $A_K = m_K^{-1} \binom{n_K}{2}^{-1} \sum_{\substack{2 \le i_2 \le m_K+1 \\1 \le j_1 < j_2 \le n_K}} X_{\{1, i_2; j_1, j_2\}}$. Using similar arguments as in the proof of Proposition~\ref{prop:martingale}, it can be shown that $A_K$ is a square integrable backward martingale with respect to the decreasing filtration $\mathcal{F}^A_K = \sigma(A_K, A_{K+1},...)$. Therefore, Theorem~\ref{th:martingale_convergence} ensures that there exists $A_\infty$ such that $A_K \xrightarrow[K \rightarrow \infty]{L_2} A_\infty$. This proves that $A_{\kappa_c(m)} \xrightarrow[m \rightarrow \infty]{L_2} A_\infty$, so applying Lemma~\ref{lem:uniform_integrability_sq} again, we obtain
    \begin{equation}
        \mathbb{E}\bigg[A_{\kappa_c(m)}^2\mathds{1}_{\big\{ |A_{\kappa_c(m)}| > \frac{\sqrt{c(m-2)}}{4}\epsilon\big\}}\bigg] \xrightarrow[m \rightarrow \infty]{} 0.
        \label{eq:e1_4}
    \end{equation}

    Combining~(\ref{eq:e1_1}),~(\ref{eq:e1_2}),~(\ref{eq:e1_3}) and~(\ref{eq:e1_4}), we deduce that the sufficient condition~(\ref{eq:e1_5}) is satisfied, thus concluding the proof.

\end{proof}

\section{Hewitt-Savage theorem \label{app:normality}}

\begin{proof}[Proof of Theorem~\ref{th:hewitt_savage}]
  This proof adapts the steps taken by \cite{feller1971introduction} and detailed by \cite{durrett2019probability} to our case. Let $A \in \mathcal{E}_\infty$. 
  
  First, let $\mathcal{A}_N = \sigma\big((\xi_i)_{1 \le i \le m_N}, (\eta_j)_{1 \le j \le n_N}, (\zeta_{ij})_{1 \le i \le m_N, 1 \le j \le n_N}\big)$, the $\sigma$-field generated by the random variables associated with the first $m_N$ rows and $n_N$ columns. Notice that $A \in \mathcal{A} := \bigcap_{n = 1}^{\infty} \mathcal{A}_N$. Since $\mathcal{A}$ is the limit of $\mathcal{A}_N$, then for all $\epsilon > 0$, there exists a $N$ and an associated set $A_N \in \mathcal{A}_N$ such that $\mathbb{P}(A - A \cap A_N) < \epsilon$ and $\mathbb{P}(A_N - A \cap A_N) < \epsilon$, so that $\mathbb{P}(A \Delta A_N) < 2 \epsilon$, where $\Delta$ is the symmetric difference operator, i.e. $B \Delta C = (B-C)\cup(C-B)$. Therefore, we can pick a sequence of sets $A_N$ such that $\mathbb{P}(A \Delta A_N) \longrightarrow 0$.
  
  Next, we consider the row-column permutation $\Phi^{(N)} = (\sigma^{(N)}_1, \sigma^{(N)}_2) \in  \mathbb{S}_{m_N} \times \mathbb{S}_{n_N}$ defined by
  \begin{align*}
      \sigma^{(N)}_1(i) &= \begin{cases} i + m_N &\text{if } 1 \le i \le m_N,  \\
               i - m_N &\text{if } m_N +1 \le i \le 2 m_N, \\
               i &\text{if } 2 m_N + 1 \le i.
 \end{cases}
 \\
      \sigma^{(N)}_2(j) &= \begin{cases} j + n_N &\text{if } 1 \le j \le n_N,  \\
               j - n_N &\text{if } n_N +1 \le j \le 2 n_N, \\
               j &\text{if } 2 n_N + 1 \le j.
 \end{cases}
  \end{align*}
  Since $A \in \mathcal{E}_\infty$, by the definition of $\mathcal{E}_\infty$, it follows that
  \begin{equation*}
      \left\{ \omega : \Phi^{(N)} \omega \in A \right\} = \left\{ \omega : \omega \in A \right\} = A.
  \end{equation*}
  Using this, if we denote $A'_N := \left\{ \omega : \Phi^{(N)} \omega \in A_N \right\}$, then we can write that 
  \begin{equation*}
      \left\{ \omega : \Phi^{(N)} \omega \in A_N \Delta A \right\} = \left\{ \omega : \omega \in A'_N \Delta A \right\} = A'_N \Delta A.
  \end{equation*}
  Furthermore, the $(\xi_i)_{1 \le i < \infty}$, $(\eta_j)_{1 \le j < \infty}$ and $(\zeta_{ij})_{1 \le i < \infty, 1 \le j < \infty}$ are i.i.d., so
  \begin{equation*}
      \mathbb{P}(A_N \Delta A) = \mathbb{P}(\omega : \omega \in A_N \Delta A) = \mathbb{P}(\omega : \Phi^{(N)} \omega \in A_N \Delta A).
  \end{equation*}
  and we conclude that $\mathbb{P}(A'_N \Delta A) = \mathbb{P}(A_N \Delta A) \longrightarrow 0$.
  
  From this, we derive that $\mathbb{P}(A_N) \longrightarrow \mathbb{P}(A)$ and $\mathbb{P}(A'_N) \longrightarrow \mathbb{P}(A)$.
  We also remark that $\mathbb{P}(A_N \Delta A'_N) \le \mathbb{P}(A_N \Delta A) + \mathbb{P}(A'_N \Delta A) \longrightarrow 0$, so $\mathbb{P}(A_N \cap A'_N) \longrightarrow \mathbb{P}(A)$.
  
  But $A_N$ and $A'_N$ are independent, so we have $\mathbb{P}(A_N \cap A'_N) = \mathbb{P}(A_N) \mathbb{P}(A'_N) \longrightarrow \mathbb{P}(A)^2$, therefore $\mathbb{P}(A) = \mathbb{P}(A)^2$, which means that $\mathbb{P}(A) = 0$ or $1$.
\end{proof}

\section{Identifiability of the BEDD model}
\label{app:ident}

\subsection{Proof of Theorem~\ref{th:id_bedd_param}}

First, we define the generalized inverse of a cumulative distribution function and we prove some useful properties. We need Lemmas~\ref{lem:link_cdf_inverse} and~\ref{lem:moment_problem} to prove Theorem~\ref{th:id_bedd_param}.

\begin{definition}
    For any increasing, bounded, c\`adl\`ag function $t : \mathbb{R} \rightarrow \mathbb{R}$, we define its generalized inverse by $t^{-1} : \mathbb{R} \rightarrow \mathbb{R}$ as follows:
    \begin{equation*}
        t^{-1}(x) = \inf \{ u \in \mathbb{R} : t(u) > x\}.
    \end{equation*}
    \label{def:generalized_inverse}
\end{definition}

\begin{lemma}[\citealp{fortelle2020generalized}, Proposition 2.2.]
  Let $t$ be an increasing, bounded, c\`adl\`ag function. Then $(t^{-1})^{-1} = t$.
  \label{lem:inverse_inverse}
\end{lemma}

\begin{lemma}
  Let $U$ be a random variable such that $U \sim \mathcal{U}[0,1]$. Let $D = f(U)$, where $f$ is an increasing, bounded, c\`adl\`ag function. Let $\tilde{f}$ be the cumulative distribution function of $D$. Then $f^{-1} = \tilde{f}$ and $\tilde{f}^{-1} = f$.
  \label{lem:link_cdf_inverse}
\end{lemma}

\begin{proof}
    Since $f$ is right-continuous, for all $x \in \mathbb{R}$, $\mathbb{P}(f(U) \le x) =  \inf \{ u \in [0,1] : f(u) > x\}$ so that means $\tilde{f} = f^{-1}$ according to Definition~\ref{def:generalized_inverse}, so the first equation is proven. Also Lemma~\ref{lem:inverse_inverse} ensures that $f = (f^{-1})^{-1} = \tilde{f}^{-1}$, which concludes the proof of this lemma.
\end{proof}

\begin{lemma}
  Let $D$ be a random variable such for all $k \in \mathbb{N}$, $\mathbb{E}[D^k] \le \alpha^k$, for some $\alpha > 0$. Then, the distribution of $D$ is uniquely characterized by its moments.
  \label{lem:moment_problem}
\end{lemma}

\begin{proof}
  We show that the exponential generating series of the moments of $D$ $M(r) = \sum_{k=1}^\infty \mathbb{E}[D^k] \frac{r^k}{k!}$ has a positive radius of convergence.
  
  Using the fact that $k!/k^{k} \xrightarrow[k \rightarrow \infty]{} 1$ and that $\mathbb{E}[D^k] \le \alpha^k$ for all $k \in \mathbb{N}$, we see that
  \begin{equation*}
  \begin{split}
      \limsup_{k \rightarrow \infty} \left(\frac{\mathbb{E}[D^k]}{k!}\right)^\frac{1}{k} &= \limsup_{k \rightarrow \infty} \frac{\mathbb{E}[D^k]^\frac{1}{k}}{k} \\
      & \le \alpha \limsup_{k \rightarrow \infty} \frac{1}{k} \\
      & < \infty.
  \end{split}
  \end{equation*}
  So by the Cauchy-Hadamard's theorem, the series $\sum_{k} \frac{\mathbb{E}[D^k] r^k}{k!}$ converges for any $r > 0$, which is a sufficient condition so that $D$ is determined by its moments $(\mathbb{E}[D^k])_{k \ge 1}$ (see Section 9.2, Theorem 2 of \cite{billingsley1995probability}).
\end{proof}

Using Lemmas~\ref{lem:link_cdf_inverse} and~\ref{lem:moment_problem}, we can finally prove Theorem~\ref{th:id_bedd_param}.

\begin{proof}[Proof of Theorem~\ref{th:id_bedd_param}]
    Let $\Theta = (\lambda, f, g)$ be BEDD parameters. Here, we prove that $f$ is uniquely characterised by $(F_k)_{k \ge 1}$. In order to do that, we introduce a random variable $D$ which both has moments $(F_k)_{k \ge 1}$ and $f$ as the generalized inverse of its cumulative distribution function (Definition~\ref{def:generalized_inverse}). We show that $D$ is uniquely characterised by $f$ and then, by its moments. 
    \begin{enumerate}
        \item Let $U$ be a random variable such that $U \sim \mathcal{U}[0,1]$. Let $D = f(U)$. For all $k \ge 1$, $\mathbb{E}[D^k] = F_k$. 
        
        \item Since $f$ is bounded, we notice that for all $k \in \mathbb{N}$, $F_k = \int f^k \le \sup_{[0,1]} f^k = \lVert f \rVert_\infty^k$, therefore $\mathbb{E}[D^k] \le \lVert f \rVert_\infty^k$. 
        
        So Lemma~\ref{lem:moment_problem} ensures that the distribution of $D$ is uniquely characterised by its moments $(F_k)_{k \ge 1}$. 
        
        \item Now, for some other increasing, bounded, c\`adl\`ag function $f^*$. Let $D^* = f^*(U)$ and $\tilde{f}^*$ its cumulative distribution function. If $D \sim D^*$, then $\tilde{f} = \tilde{f}^*$. Therefore, using the generalized inverse (Definition~\ref{def:generalized_inverse}), we have $\tilde{f}^{-1} = (\tilde{f}^*)^{-1}$. Finally, Lemma~\ref{lem:link_cdf_inverse} implies that $f = f^*$. Thus, the distribution of $D$ is uniquely characterised by $f$.
    \end{enumerate}
    
    We can conclude by stating that the moments $(F_k)_{k \ge 1}$ of $D$ are then uniquely characterised by $f$.
    
    By symmetry, the same follows for $g$ and $(G_k)_{k \ge 1}$.
\end{proof}

\subsection{Proof of Theorem~\ref{th:id_quadruplet}}

Theorem~\ref{th:id_quadruplet} is proven by induction using two lemmas.

\begin{lemma}
  Let $\Theta = (\lambda, f, g)$ be BEDD parameters and $Y \sim \mathcal{L}\text{-BEDD}(\Theta)$. For all $i \in \mathbb{N}$ and for all $(j_1,j_2) \in \mathbb{N}^2$ such that $j_1 \neq j_2$,
  \begin{equation*}
      \mathbb{E}[\Psi_k(Y_{ij_1}) \times Y_{ij_2}] = \lambda^{k+1} F_{k+1} G_k.
  \end{equation*}
  \label{lem:ident1}
\end{lemma}

\begin{proof}
  Let $\Theta = (\lambda, f, g)$ be BEDD parameters and $Y \sim \mathcal{L}\text{-BEDD}(\Theta)$, for any $i \in \mathbb{N}$ and $(j_1,j_2) \in \mathbb{N}^2$ such that $j_1 \neq j_2$,
  \begin{equation*}
      \begin{split}
          \mathbb{E}[\Psi_k(Y_{ij_1}) \times Y_{ij_2}] &= \mathbb{E}\big[\mathbb{E}[\Psi_k(Y_{ij_1}) \times Y_{ij_2} | \xi_{i}, \eta_{j_1}, \eta_{j_2}]\big] \\
          & = \mathbb{E}\big[\mathbb{E}[\Psi_k(Y_{ij_1}) | \xi_{i}, \eta_{j_1}] \times \mathbb{E}[Y_{ij_2} | \xi_{i}, \eta_{j_2}]\big] \\
          & = \mathbb{E}\big[\lambda^k f(\xi_{i})^k g(\eta_{j_1})^k \times \lambda f(\xi_{i}) g(\eta_{j_2})\big] \\
          & = \lambda^{k+1} \mathbb{E}\big[ f(\xi_{i})^{k+1}\big] \mathbb{E}\big[ g(\eta_{j_1})^k\big]   \mathbb{E}\big[ g(\eta_{j_2})\big] \\
          & = \lambda^{k+1} F_{k+1} G_k.
      \end{split}
  \end{equation*}
\end{proof}

\begin{lemma}
  Let $\Theta = (\lambda, f, g)$ be BEDD parameters and $Y \sim \text{BEDD}(\Theta)$. For all $(i_1,i_2) \in \mathbb{N}^2$ such that $i_1 \neq i_2$ and for all $j \in \mathbb{N}$,
  \begin{equation*}
      \mathbb{E}[\Psi_k(Y_{i_1j_1})\times Y_{i_2j_1}] = \lambda^{k+1} F_{k} G_{k+1}.
  \end{equation*}
  \label{lem:ident2}
\end{lemma}

\begin{proof}
  Let $\Theta = (\lambda, f, g)$ be BEDD parameters and $Y \sim \mathcal{L}\text{-BEDD}(\Theta)$, for any $(i_1,i_2) \in \mathbb{N}^2$ such that $i_1 \neq i_2$ and for any $j \in \mathbb{N}$,
  \begin{equation*}
      \begin{split}
          \mathbb{E}[\Psi_k(Y_{i_1j}) \times Y_{i_2 j}] &= \mathbb{E}\big[\mathbb{E}[\Psi_k(Y_{i_1j}) \times Y_{i_2j} | \xi_{i_1}, \xi_{i_2}, \eta_{j}]\big] \\
          & = \mathbb{E}\big[\mathbb{E}[\Psi_k(Y_{i_1j}) | \xi_{i_1}, \eta_{j}] \times \mathbb{E}[Y_{i_2j} | \xi_{i_2}, \eta_{j}]\big] \\
          & = \mathbb{E}\big[\lambda^k f(\xi_{i_1})^k g(\eta_{j})^k \times \lambda f(\xi_{i_2}) g(\eta_{j})\big] \\
          & = \lambda^{k+1} \mathbb{E}\big[ f(\xi_{i_1})^{k}\big] \mathbb{E}\big[ f(\xi_{i_2})\big]   \mathbb{E}\big[ g(\eta_{j})^{k+1}\big] \\
          & = \lambda^{k+1} F_{k} G_{k+1}.
      \end{split}
  \end{equation*}
\end{proof}

\begin{proof}[Proof of Theorem~\ref{th:id_quadruplet}]
  Let $\Theta = (\lambda, f, g)$ be BEDD parameters and $Y \sim \mathcal{L}\text{-BEDD}(\Theta)$. Let $Y_{(i_1, i_2; j_1, j_2)}$ be a quadruplet of $Y$. Since Assumption~\ref{ass:distfunc} holds, we set the $(\Psi_k)_{k \ge 1}$ such that $\mathbb{E}[\Psi_k(X)] = \mu^k$ for all $k \in \mathbb{N}$. We know that $\mathbb{E}[Y_{i_1j_1}] = \lambda$.
  \begin{itemize}
      \item First, recall that $F_1 = G_1 = 1$.
      \item Then, having recovered $\lambda$ and all the $F_\ell$, $G_\ell$ for $1 \le \ell \le k$, we can recover $F_{k+1}$ and $G_{k+1}$ as from Lemmas~\ref{lem:ident1} and~\ref{lem:ident2},
      \begin{equation*}
          \begin{split}
              F_{k+1} &= \lambda^{-(k+1)}  G_k^{-1} \mathbb{E}[\Psi_k(Y_{i_1j_1}) \times Y_{i_1j_2}]\\
              G_{k+1} &= \lambda^{-(k+1)} F_{k}^{-1} \mathbb{E}[\Psi_k(Y_{i_1j_1})\times Y_{i_2j_1}]
          \end{split}
      \end{equation*}
  \end{itemize}
  Then, the theorem is proven by induction. 
\end{proof}

\section{Derivation of variances}

In this section, we derive a general formula for the covariance of two $U$-statistics and then we derive asymptotic variances for specific kernels used in Section~\ref{sec:app}. We denote for any $k > 0$, $F_k := \int f^k$ and $G_k := \int g^k$.

\begin{lemma}
Let $Y$ be a RCE matrix. Let $h_1$ and $h_2$ be two quadruplet kernels. Let $U^{h_1}_N := U^{h_1}_{m_N, n_N}$ and $U^{h_2}_N := U^{h_2}_{m_N, n_N}$, defined by Formula~\eqref{eq:ustats} and Definition~\ref{def:mn}. For any $\sigma$-field $\mathcal{F}$,
\begin{equation*}
\begin{split}
    \Cov(U^{h_1}_N, U^{h_2}_N | \mathcal{F}) &= \frac{4}{cN} \Cov(h_1(Y_{(1,2;1,2)}), h_2(Y_{(1,3;3,4)}) | \mathcal{F}) \\
    & + \frac{4}{(1-c)N}  \Cov(h_1(Y_{(1,2;1,2)}), h_2(Y_{(3,4;1,3)}) | \mathcal{F}) + o\left(\frac{1}{N}\right).
\end{split}
\end{equation*}
\label{lem:covariance_formula}
\end{lemma}

\begin{proof}
  Similar to the proof of Lemma~\ref{lem:form_s}, using the exchangeability of the quadruplets, we deduce that
  \begin{equation*}
      \begin{split}
          \Cov(U^{h_1}_N, U^{h_2}_N | \mathcal{F}) &= \binom{m_N}{2}^{-2} \binom{n_N}{2}^{-2} \Cov\left(\sum_{\substack{1 \le i_1 < i_2 \le m_N \\ 1 \le j_1 < j_2 \le n_N}} h_1(Y_{(i_1,i_2;j_1,j_2)}), \sum_{\substack{1 \le i_1 < i_2 \le m_N \\ 1 \le j_1 < j_2 \le n_N}} h_2(Y_{(i_1,i_2;j_1,j_2)}) \Bigg| \mathcal{F}\right), \\
          &= \binom{m_N}{2}^{-2} \binom{n_N}{2}^{-2} \sum_{\substack{1 \le i_1 < i_2 \le m_N \\ 1 \le j_1 < j_2 \le n_N}} \sum_{\substack{1 \le i'_1 < i'_2 \le m_N \\ 1 \le j'_1 < j'_2 \le n_N}} \Cov(h_1(Y_{(i_1,i_2;j_1,j_2)}), h_2(Y_{(i'_1,i'_2;j'_1,j'_2)}) | \mathcal{F}), \\
          &= \binom{m_N}{2}^{-1} \binom{n_N}{2}^{-1} \sum_{\substack{1 \le i'_1 < i'_2 \le m_N \\ 1 \le j'_1 < j'_2 \le n_N}} \Cov(h_1(Y_{(1,2;1,2)}), h_2(Y_{(i'_1,i'_2;j'_1,j'_2)}) | \mathcal{F}), \\
          &= \binom{m_N}{2}^{-1} \binom{n_N}{2}^{-1} \bigg((m_N-2) (n_N -2) (n_N-3) \Cov(h_1(Y_{(1,2;1,2)}), h_2(Y_{(1,3;3,4)}) | \mathcal{F}) \\
          & + (m_N-2) (m_N-3) (n_N -2)  \Cov(h_1(Y_{(1,2;1,2)}), h_2(Y_{(3,4;1,3)}) | \mathcal{F}) \\
          & + o(m_N n_N^2) + o(m_N^2 n_N)\bigg), \\
          &= \frac{4}{c^2 (1-c)^2 N^4} \bigg(c(1-c)^2 N^3 \Cov(h_1(Y_{(1,2;1,2)}), h_2(Y_{(1,3;3,4)}) | \mathcal{F}) \\
          & + c^2(1-c) N^3  \Cov(h_1(Y_{(1,2;1,2)}), h_2(Y_{(3,4;1,3)}) | \mathcal{F}) + o(N^3)\bigg), \\
          &= \frac{4}{cN} \Cov(h_1(Y_{(1,2;1,2)}), h_2(Y_{(1,3;3,4)}) | \mathcal{F}) \\
          & + \frac{4}{(1-c)N}  \Cov(h_1(Y_{(1,2;1,2)}), h_2(Y_{(3,4;1,3)}) | \mathcal{F}) + o\left(\frac{1}{N}\right). \\
      \end{split}
  \end{equation*}
\end{proof}

\begin{corollary}
    Let $Y$ be a RCE matrix. Let $h_1$ and $h_2$ be two quadruplet kernels. Let $U^{h_1}_N := U^{h_1}_{m_N, n_N}$ and $U^{h_2}_N := U^{h_2}_{m_N, n_N}$, defined by Formula~\eqref{eq:ustats} and Definition~\ref{def:mn}. For any $\sigma$-field $\mathcal{F}$,
\begin{equation*}
\begin{split}
    \mathbb{V}[U^{h}_N | \mathcal{F}] &= \frac{4}{cN} \Cov(h(Y_{(1,2;1,2)}), h(Y_{(1,3;3,4)}) | \mathcal{F}) \\
    & + \frac{4}{(1-c)N}  \Cov(h(Y_{(1,2;1,2)}), h(Y_{(3,4;1,3)}) | \mathcal{F}) + o\left(\frac{1}{N}\right).
\end{split}
\end{equation*}
\label{cor:asymptotic_variance}
\end{corollary}

\begin{lemma}
Let $Y$ be a RCE matrix generated by the Poisson-BEDD model, with density $\lambda$ and degree functions $f$ and $g$. Let $h$ be a quadruplet kernel defined by
\begin{equation*}
    h(Y_{(i_1,i_2;j_1,j_2)}) = \frac{1}{2}(Y_{i_1j_1}Y_{i_1j_2} + Y_{i_2j_1}Y_{i_2j_2}),
\end{equation*}
then a closed-form expression for $V$ of Theorem~\ref{th:gaussian_theorem} is
\begin{equation*}
    V = \frac{\lambda^4}{c} (F_4 - F_2^2) + \frac{4 \lambda^4 }{1-c}  F_2^2 (G_2 - 1).
\end{equation*}
\label{lem:expression_vh1}
\end{lemma}

\begin{proof}
Using the fact that the $(Y_{ij})_{(i,j) \in \mathbb{N}^2}$ are independent conditionally on $\xi = (\xi_i)_{i\in\mathbb{N}}$ and $\eta = (\eta_j)_{j\in\mathbb{N}}$, we find that
  \begin{equation*}
      \begin{split}
          \Cov\big(h(Y_{(1,2;1,2)}),h(Y_{(1,3;3,4)})\big) &=  \frac{1}{4}\Cov\big(Y_{11}Y_{12} + Y_{21}Y_{22},Y_{13}Y_{14} + Y_{33}Y_{34}\big), \\
          &= \frac{1}{4}\Cov\big(Y_{11}Y_{12},Y_{13}Y_{14}\big), \\
          &= \frac{1}{4} \left( \mathbb{E}[Y_{11}Y_{12}Y_{13}Y_{14}] - \mathbb{E}[Y_{11}Y_{12}]\mathbb{E}[Y_{13}Y_{14}] \right),\\
          &= \frac{1}{4} \left( \mathbb{E}[Y_{11}Y_{12}Y_{13}Y_{14}] - \mathbb{E}[Y_{11}Y_{12}]^2 \right),\\
          &= \frac{1}{4} \left( \mathbb{E}[\mathbb{E}[Y_{11}Y_{12}Y_{13}Y_{14}|\xi, \eta]] - \mathbb{E}[\mathbb{E}[Y_{11}Y_{12}|\xi, \eta]]^2 \right),\\
          &= \frac{1}{4}\left( \mathbb{E}[\lambda^4 f(\xi_1)^4 g(\eta_1)g(\eta_2)g(\eta_3)g(\eta_4)] - \mathbb{E}[\lambda^2f(\xi_1)^2 g(\eta_1)g(\eta_2)]^2 \right),\\
          &= \frac{\lambda^4}{4}(F_4-F_2^2),
      \end{split}
  \end{equation*}
and
  \begin{equation*}
      \begin{split}
          \Cov\big(h(Y_{(1,2;1,2)}),h(Y_{(3,4;1,3)})\big) &=  \frac{1}{4}\Cov\big(Y_{11}Y_{12} + Y_{21}Y_{22},Y_{31}Y_{33} + Y_{41}Y_{43}\big), \\
          &= \frac{1}{4} \times 4 \Cov\big(Y_{11}Y_{12} ,Y_{31}Y_{33}\big), \\
          &= \mathbb{E}[Y_{11}Y_{12}Y_{31}Y_{33}] - \mathbb{E}[Y_{11}Y_{12}]\mathbb{E}[Y_{31}Y_{33}],\\
          &= \mathbb{E}[Y_{11}Y_{12}Y_{31}Y_{33}] - \mathbb{E}[Y_{11}Y_{12}]^2,\\
          &= \mathbb{E}[\mathbb{E}[Y_{11}Y_{12}Y_{31}Y_{33}|\xi, \eta]] - \mathbb{E}[\mathbb{E}[Y_{11}Y_{12}|\xi, \eta]]^2,\\
          &= \mathbb{E}[\lambda^4 f(\xi_1)^2f(\xi_3)^2 g(\eta_1)^2g(\eta_2)g(\eta_3)] - \mathbb{E}[\lambda^2f(\xi_1)^2 g(\eta_1)g(\eta_2)]^2,\\
          &= \lambda^4 (F_2^2 G_2-F_2^2),\\
          &= \lambda^4 F_2^2 (G_2-1).
      \end{split}
  \end{equation*}
  The proof is concluded injecting these results in the expression of $V = \frac{4}{c}\Cov\big(h(Y_{(1,2;1,2)}),h(Y_{(1,3;3,4)})\big) + \frac{4}{1-c}\Cov\big(h(Y_{(1,2;1,2)}),h(Y_{(3,4;1,3)})\big)$ given by Theorem~\ref{th:gaussian_theorem}.
\end{proof}

\begin{lemma}
Let $Y$ be a RCE matrix generated by the Poisson-BEDD model, with density $\lambda$ and degree functions $f$ and $g$. Let $h$ be a quadruplet kernel defined by
\begin{equation*}
    h(Y_{(i_1,i_2;j_1,j_2)}) = \frac{1}{2}(Y_{i_1j_1} Y_{i_2j_2} + Y_{i_1j_2} Y_{i_2j_1}),
\end{equation*}
then a closed-form expression for $V$ of Theorem~\ref{th:gaussian_theorem} is
\begin{equation*}
    V = \frac{4 \lambda^4}{c} (F_2 - 1) + \frac{4 \lambda^4}{1-c}  (G_2 - 1).
\end{equation*}
\label{lem:expression_vh2}
\end{lemma}

\begin{proof}
Using the fact that the $(Y_{ij})_{(i,j) \in \mathbb{N}^2}$ are independent conditionally on $\xi = (\xi_i)_{i\in\mathbb{N}}$ and $\eta = (\eta_j)_{j\in\mathbb{N}}$, we find that
  \begin{equation*}
      \begin{split}
          \Cov\big(h(Y_{(1,2;1,2)}),h(Y_{(1,3;3,4)})\big) &=  \frac{1}{4}\Cov\big(Y_{11}Y_{22} + Y_{12}Y_{21},Y_{13}Y_{34} + Y_{14}Y_{33}\big), \\
          &= \frac{1}{4} \times 4 \Cov\big(Y_{11}Y_{22},Y_{13}Y_{34}\big), \\
          &= \mathbb{E}[Y_{11}Y_{22}Y_{13}Y_{34}] - \mathbb{E}[Y_{11}Y_{22}]\mathbb{E}[Y_{13}Y_{34}],\\
          &= \mathbb{E}[Y_{11}Y_{22}Y_{13}Y_{34}]  - \mathbb{E}[Y_{11}Y_{22}]^2,\\
          &= \mathbb{E}[\mathbb{E}[Y_{11}Y_{22}Y_{13}Y_{34}|\xi, \eta]] - \mathbb{E}[\mathbb{E}[Y_{11}Y_{22}|\xi, \eta]]^2,\\
          &= \mathbb{E}[\lambda^4 f(\xi_1)^2f(\xi_2)f(\xi_3) g(\eta_1)g(\eta_2)g(\eta_3)g(\eta_4)] - \mathbb{E}[\lambda^2f(\xi_1)f(\xi_2) g(\eta_1)g(\eta_2)]^2,\\
          &= \lambda^4 (F_2-1),
      \end{split}
  \end{equation*}
and
  \begin{equation*}
      \begin{split}
          \Cov\big(h(Y_{(1,2;1,2)}),h(Y_{(3,4;1,3)})\big) &=  \frac{1}{4}\Cov\big(Y_{11}Y_{22} + Y_{12}Y_{21},Y_{31}Y_{43} + Y_{33}Y_{41}\big), \\
          &= \frac{1}{4} \times 4 \Cov\big(Y_{11}Y_{22} ,Y_{31}Y_{43}\big), \\
          &= \mathbb{E}[Y_{11}Y_{22}Y_{31}Y_{43}] - \mathbb{E}[Y_{11}Y_{22}]\mathbb{E}[Y_{31}Y_{43}],\\
          &= \mathbb{E}[Y_{11}Y_{22}Y_{31}Y_{43}] - \mathbb{E}[Y_{11}Y_{22}]^2,\\
          &= \mathbb{E}[\mathbb{E}[Y_{11}Y_{22}Y_{31}Y_{43}|\xi, \eta]] - \mathbb{E}[\mathbb{E}[Y_{11}Y_{22}|\xi, \eta]]^2,\\
          &= \mathbb{E}[\lambda^4 f(\xi_1)f(\xi_2)f(\xi_3)f(\xi_4) g(\eta_1)^2g(\eta_2)g(\eta_3)] - \mathbb{E}[\lambda^2f(\xi_1)f(\xi_2) g(\eta_1)g(\eta_2)]^2,\\
          &= \lambda^4 (G_2-1).
      \end{split}
  \end{equation*}
  The proof is concluded injecting these results in the expression of $V = \frac{4}{c}\Cov\big(h(Y_{(1,2;1,2)}),h(Y_{(1,3;3,4)})\big) + \frac{4}{1-c}\Cov\big(h(Y_{(1,2;1,2)}),h(Y_{(3,4;1,3)})\big)$ given by Theorem~\ref{th:gaussian_theorem}.
\end{proof}

\begin{lemma}
Let $Y$ be a RCE matrix generated by the Poisson-BEDD model, with density $\lambda$ and degree functions $f$ and $g$. Let $h_1$ and $h_2$ be two quadruplet kernels defined by
\begin{equation*}
    h_1(Y_{(i_1,i_2;j_1,j_2)}) = \frac{1}{2}(Y_{i_1j_1}Y_{i_1j_2} + Y_{i_2j_1}Y_{i_2j_2}),
\end{equation*}
and
\begin{equation*}
    h_2(Y_{(i_1,i_2;j_1,j_2)}) = \frac{1}{2}(Y_{i_1j_1} Y_{i_2j_2} + Y_{i_1j_2} Y_{i_2j_1}).
\end{equation*}
Let $U^{h_1}_N := U^{h_1}_{m_N, n_N}$ and $U^{h_2}_N := U^{h_2}_{m_N, n_N}$, defined by Formula~\eqref{eq:ustats} and Definition~\ref{def:mn}. Set $C^{h_1,h_2} := \lim_{N \rightarrow +\infty} N \Cov(U^{h_1}_N, U^{h_2}_N)$, then 
\begin{equation*}
    C^{h_1,h_2} = \frac{2 \lambda^4}{ c} (F_3 - F_2) + \frac{4 \lambda^4}{1-c} F_2 (G_2 - 1).
\end{equation*}
\label{lem:expression_ch1h2}
\end{lemma}

\begin{proof}
  First, using Lemma~\ref{lem:covariance_formula}, we deduce that
  \begin{equation*}
      \begin{split}
          \Cov(U^{h_1}_N, U^{h_2}_N) &= \frac{4}{cN} \Cov(h_1(Y_{(1,2;1,2)}), h_2(Y_{(1,3;3,4)})) \\
          & + \frac{4}{(1-c)N}  \Cov(h_1(Y_{(1,2;1,2)}), h_2(Y_{(3,4;1,3)})) + o\left(\frac{1}{N}\right), \\
      \end{split}
  \end{equation*}
  so that 
  \begin{equation*}
      C^{h_1,h_2} = \frac{4}{c} \Cov(h_1(Y_{(1,2;1,2)}), h_2(Y_{(1,3;3,4)})) + \frac{4}{1-c}  \Cov(h_1(Y_{(1,2;1,2)}), h_2(Y_{(3,4;1,3)})).
  \end{equation*}
  To conclude the proof, we proceed analogously to the proofs of Lemma~\ref{lem:expression_vh1} and~\ref{lem:expression_vh2} to derive the expressions of
  \begin{equation*}
  \begin{split}
      \Cov(h_1(Y_{(1,2;1,2)}), h_2(Y_{(1,3;3,4)})) &= \frac{1}{4} \Cov(Y_{11}Y_{12} + Y_{21}Y_{22}, Y_{13}Y_{34} + Y_{33}Y_{14}), \\
      &= \frac{1}{4} \Cov(Y_{11}Y_{12}, Y_{13}Y_{34} + Y_{33}Y_{14}), \\
      &= \frac{1}{4} \times 2\Cov(Y_{11}Y_{12}, Y_{13}Y_{34}), \\
      &= \frac{1}{2} \left(\mathbb{E}[Y_{11}Y_{12}Y_{13}Y_{34}] - \mathbb{E}[Y_{11}Y_{12}]\mathbb{E}[Y_{13}Y_{34}] \right),\\
      &= \frac{1}{2} \left(\mathbb{E}[Y_{11}Y_{12}Y_{13}Y_{34}] - \mathbb{E}[Y_{11}Y_{12}]\mathbb{E}[Y_{11}Y_{22}] \right),\\
      &= \frac{1}{2} \left(\mathbb{E}[\mathbb{E}[Y_{11}Y_{12}Y_{13}Y_{34}|\xi, \eta]] - \mathbb{E}[\mathbb{E}[Y_{11}Y_{12}|\xi, \eta]]\mathbb{E}[\mathbb{E}[Y_{11}Y_{22}|\xi, \eta]] \right),\\
      &= \frac{1}{2} \left(\mathbb{E}[\lambda^4 f(\xi_1)^3 f(\xi_3) g(\eta_1)g(\eta_2)g(\eta_3)g(\eta_4)]\right. \\
      &\quad \left.- \mathbb{E}[\lambda^2f(\xi_1)^2 g(\eta_1)g(\eta_2)]\mathbb{E}[\lambda^2f(\xi_1)f(\xi_2) g(\eta_1)g(\eta_2)]\right),\\
      &= \frac{1}{2}\lambda^4 (F_3-F_2),
  \end{split}
  \end{equation*}
  and
  \begin{equation*}
  \begin{split}
      \Cov(h_1(Y_{(1,2;1,2)}), h_2(Y_{(3,4;1,3)})) &= \frac{1}{4} \Cov(Y_{11}Y_{12} + Y_{21}Y_{22}, Y_{31}Y_{43} + Y_{41}Y_{33}), \\
      &= \frac{1}{4} \times 4\Cov(Y_{11}Y_{12}, Y_{31}Y_{43}), \\
      &= \mathbb{E}[Y_{11}Y_{12}Y_{31}Y_{43}] - \mathbb{E}[Y_{11}Y_{12}]\mathbb{E}[Y_{31}Y_{43}],\\
      &= \mathbb{E}[Y_{11}Y_{12}Y_{31}Y_{43}] - \mathbb{E}[Y_{11}Y_{12}]\mathbb{E}[Y_{11}Y_{22}],\\
      &= \mathbb{E}[\mathbb{E}[Y_{11}Y_{12}Y_{31}Y_{43}|\xi, \eta]] - \mathbb{E}[\mathbb{E}[Y_{11}Y_{12}|\xi, \eta]]\mathbb{E}[\mathbb{E}[Y_{11}Y_{22}|\xi, \eta]],\\
      &= \mathbb{E}[\lambda^4 f(\xi_1)^2 f(\xi_3) f(\xi_4) g(\eta_1)^2g(\eta_2)g(\eta_3)] \\
      &\quad - \mathbb{E}[\lambda^2f(\xi_1)^2 g(\eta_1)g(\eta_2)]\mathbb{E}[\lambda^2f(\xi_1)f(\xi_2) g(\eta_1)g(\eta_2)],\\
      &= \lambda^4 F_2(G_2-1).
  \end{split}
  \end{equation*}
\end{proof}

\begin{lemma}
Let $Y$ be a RCE matrix generated by the Bernoulli-BEDD model, with density $\lambda$ and degree functions $f$ and $g$. Let $h$ be the quadruplet kernel defined by
\begin{equation*}
\begin{split}
    h(Y_{(i_1,i_2;j_1,j_2)}) =& \frac{1}{4} \bigg(Y_{i_1j_1} Y_{i_1j_2} Y_{i_2j_1} (1 - Y_{i_2j_2}) + Y_{i_1j_1} Y_{i_1j_2} Y_{i_2j_2} (1 - Y_{i_2j_1}) \\
    &+ Y_{i_1j_1} Y_{i_2j_1} Y_{i_2j_2} (1 - Y_{i_1j_2}) + Y_{i_1j_2}Y_{i_2j_1} Y_{i_2j_2} (1 - Y_{i_1j_1}) \bigg),
\end{split}
\end{equation*}
then $\mathbb{E}[h(Y_{(1,2;1,2)})] = \lambda^3 F_2 G_2 (1 - \lambda F_2 G_2)$ and a closed-form expression for $V$ of Theorem~\ref{th:gaussian_theorem} is
\begin{equation*}
\begin{split}
    V &= \frac{4 \lambda^6}{c} G_2^2 \left[\lambda^2 F_4 F_2^2 G_2^2 - \lambda F_4 F_2 G_2 - \lambda F_3 F_2^2 G_2 + \frac{1}{2} F_3 F_2 + \frac{1}{4} F_4 + \frac{1}{4} F_2^3 \right] \\
    & + \frac{4 \lambda^6}{1-c} F_2^2 \left[\lambda^2 G_4 G_2^2 F_2^2 - \lambda G_4 G_2 F_2 - \lambda G_3 G_2^2 F_2 + \frac{1}{2} G_3 G_2 + \frac{1}{4} G_4 + \frac{1}{4} G_2^3 \right] \\
    & - \frac{4 \lambda^6}{c(1-c)} F_2^2 G_2^2 (1 - \lambda F_2 G_2)^2.
\end{split}
\end{equation*}
\label{lem:motif_variance}
\end{lemma}

\begin{proof}
  First, the expectation of $h$ is
  \begin{equation*}
  \begin{split}
      \mathbb{E}[h(Y_{(1,2;1,2)})] &= \mathbb{E}[Y_{11} Y_{12} Y_{21} (1 - Y_{22})] \\
      &= \mathbb{E}[Y_{11} Y_{12} Y_{21}] - \mathbb{E}[Y_{11} Y_{12} Y_{21}Y_{22}] \\
      &= \mathbb{E}[\mathbb{E}[Y_{11} Y_{12} Y_{21}|\xi, \eta]] - \mathbb{E}[\mathbb{E}[Y_{11} Y_{12} Y_{21}Y_{22}|\xi, \eta]] \\
      &= \mathbb{E}[\lambda^3 f(\xi_1)^2 f(\xi_2) g(\xi_1)^2 g(\xi_2)] - \mathbb{E}[\lambda^4 f(\xi_1)^2 f(\xi_2)^2 g(\xi_1)^2 g(\xi_2)^2] \\
      &= \lambda^3 F_2 G_2 - \lambda^4 F_2^2 G_2^2. 
  \end{split}
  \end{equation*}
  Now we derive the expression of $V$. From the expression given by Theorem~\ref{th:gaussian_theorem}, we deduce the following form
  \begin{equation*}
      V = \frac{4}{c}\mathbb{E}[h(Y_{(1,2;1,2)}) h(Y_{(1,3;3,4)})] + \frac{4}{1-c}\mathbb{E}[h(Y_{(1,2;1,2)}) h(Y_{(3,4;1,3)})] - \frac{4}{c(1-c)}\mathbb{E}[h(Y_{(1,2;1,2)})]^2.
  \end{equation*}
  Since the calculation of $\mathbb{E}[h(Y_{(1,2;1,2)}) h(Y_{(1,3;3,4)})]$ and $\mathbb{E}[h(Y_{(1,2;1,2)}) h(Y_{(3,4;1,3)})]$ is completely symmetric in this case, we only need to prove that
  \begin{equation}
      \mathbb{E}[h(Y_{(1,2;1,2)}) h(Y_{(1,3;3,4)})] = \lambda^6 G_2^2 \left[\lambda^2 F_4 F_2^2 G_2^2 - \lambda F_4 F_2 G_2 - \lambda F_3 F_2^2 G_2 + \frac{1}{2} F_3 F_2 + \frac{1}{4} F_4 + \frac{1}{4} F_2^3 \right].
      \label{eq:intermediate_lemma_motif}
  \end{equation}
  The direct derivation of this quantity is more tedious than technical. Using symmetries and exchangeability, one can decompose it.
 \begin{equation*}
     \begin{split}
        \mathbb{E}[h(Y_{(1,2;1,2)}) h(Y_{(1,3;3,4)})] &= \frac{1}{16}\mathbb{E}\bigg[\bigg(Y_{11} Y_{12} Y_{21} (1 - Y_{22}) + Y_{11} Y_{12} Y_{22} (1 - Y_{21}) \\
         &+ Y_{11} Y_{21} Y_{22} (1 - Y_{12}) + Y_{12}Y_{21} Y_{22} (1 - Y_{11})\bigg) \\
         & \times \bigg(Y_{13} Y_{14} Y_{33} (1 - Y_{34}) + Y_{13} Y_{14} Y_{34} (1 - Y_{33}) \\
         &+ Y_{13} Y_{33} Y_{34} (1 - Y_{14}) + Y_{14}Y_{33} Y_{34} (1 - Y_{13})\bigg)\bigg] \\
         &= \frac{1}{4} \mathbb{E}\bigg[\bigg(Y_{11} Y_{12} Y_{21} (1 - Y_{22}) +  Y_{11} Y_{21} Y_{22} (1 - Y_{12})\bigg) \\
         & \times \bigg(Y_{13} Y_{14} Y_{33} (1 - Y_{34}) + Y_{13} Y_{33} Y_{34} (1 - Y_{14}))\bigg)\bigg] \\
         &= \frac{1}{4} \bigg( 4 \mathbb{E}[Y_{11} Y_{12} Y_{21} Y_{22} Y_{13} Y_{14} Y_{33} Y_{34}] \\
         &\quad - 4 \mathbb{E}[Y_{11} Y_{12} Y_{21} Y_{22} Y_{13} Y_{14} Y_{33}] \\
         &\quad - 4 \mathbb{E}[Y_{11} Y_{12} Y_{21} Y_{22} Y_{13} Y_{33} Y_{34}] \\
         &\quad + 2 \mathbb{E}[Y_{11} Y_{12} Y_{21} Y_{13} Y_{33} Y_{34}] \\
         &\quad + \mathbb{E}[Y_{11} Y_{12} Y_{21} Y_{13} Y_{14} Y_{33}] \\
         &\quad + \mathbb{E}[Y_{11} Y_{21} Y_{22} Y_{13} Y_{33} Y_{34}]\bigg).
     \end{split}
 \end{equation*}
 Now we derive each simpler expectation.
 \begin{equation*}
     \begin{split}
         \mathbb{E}[Y_{11} Y_{12} Y_{21} Y_{22} Y_{13} Y_{14} Y_{33} Y_{34}] &= \mathbb{E}[\mathbb{E}[Y_{11} Y_{12} Y_{21} Y_{22} Y_{13} Y_{14} Y_{33} Y_{34}|\xi, \eta]] \\
         &= \mathbb{E}[\lambda^8 f(\xi_1)^4 f(\xi_2)^2 f(\xi_3)^2 g(\eta_1)^2 g(\eta_2)^2 g(\eta_3)^2 g(\eta_4)^2] \\
         &= \lambda^8 F_4 F_2^2 G_2^4.
     \end{split}
 \end{equation*}
 \begin{equation*}
     \begin{split}
         \mathbb{E}[Y_{11} Y_{12} Y_{21} Y_{22} Y_{13} Y_{14} Y_{33}] &= \mathbb{E}[\mathbb{E}[Y_{11} Y_{12} Y_{21} Y_{22} Y_{13} Y_{14} Y_{33}|\xi, \eta]] \\
         &= \mathbb{E}[\lambda^8 f(\xi_1)^4 f(\xi_2)^2 f(\xi_3) g(\eta_1)^2 g(\eta_2)^2 g(\eta_3)^2 g(\eta_4)] \\
         &= \lambda^7 F_4 F_2 G_2^3.
     \end{split}
 \end{equation*}
 \begin{equation*}
     \begin{split}
         \mathbb{E}[Y_{11} Y_{12} Y_{21} Y_{22} Y_{13} Y_{33} Y_{34}] &= \mathbb{E}[\mathbb{E}[Y_{11} Y_{12} Y_{21} Y_{22} Y_{13} Y_{33} Y_{34}|\xi, \eta]] \\
         &= \mathbb{E}[\lambda^7 f(\xi_1)^3 f(\xi_2)^2 f(\xi_3)^2 g(\eta_1)^2 g(\eta_2)^2 g(\eta_3)^2 g(\eta_4)] \\
         &= \lambda^7 F_3 F_2 G_2^3.
     \end{split}
 \end{equation*}
 \begin{equation*}
     \begin{split}
         \mathbb{E}[Y_{11} Y_{12} Y_{21} Y_{13} Y_{33} Y_{34}] &= \mathbb{E}[\mathbb{E}[Y_{11} Y_{12} Y_{21} Y_{13} Y_{33} Y_{34}|\xi, \eta]] \\
         &= \mathbb{E}[\lambda^6 f(\xi_1)^3 f(\xi_2) f(\xi_3)^2 g(\eta_1)^2 g(\eta_2) g(\eta_3)^2 g(\eta_4)] \\
         &= \lambda^6 F_3 F_2 G_2^2.
     \end{split}
 \end{equation*}
 \begin{equation*}
     \begin{split}
         \mathbb{E}[Y_{11} Y_{12} Y_{21} Y_{13} Y_{14} Y_{33}] &= \mathbb{E}[\mathbb{E}[Y_{11} Y_{12} Y_{21} Y_{13} Y_{14} Y_{33}|\xi, \eta]] \\
         &= \mathbb{E}[\lambda^6 f(\xi_1)^4 f(\xi_2) f(\xi_3) g(\eta_1)^2 g(\eta_2) g(\eta_3)^2 g(\eta_4)] \\
         &= \lambda^6 F_4 G_2^2.
     \end{split}
 \end{equation*}
 \begin{equation*}
     \begin{split}
         \mathbb{E}[Y_{11} Y_{21} Y_{22} Y_{13} Y_{33} Y_{34}] &= \mathbb{E}[\mathbb{E}[Y_{11} Y_{21} Y_{22} Y_{13} Y_{33} Y_{34}|\xi, \eta]] \\
         &= \mathbb{E}[\lambda^6 f(\xi_1)^2 f(\xi_2)^2 f(\xi_3)^2 g(\eta_1)^2 g(\eta_2) g(\eta_3)^2 g(\eta_4)] \\
         &= \lambda^6 F_2^3 G_2^2.
     \end{split}
 \end{equation*}
 Injecting these expressions into~\eqref{eq:intermediate_lemma_motif}, we find the correct expression for $\mathbb{E}[h(Y_{(1,2;1,2)}) h(Y_{(1,3;3,4)})]$, which concludes the proof.
\end{proof}

\begin{lemma}
Let $Y$ be a RCE matrix generated by the Bernoulli-BEDD model, with density $\lambda$ and degree functions $f$ and $g$. For $p \ge 1$ and $q \ge 1$, let $h_{p,q}$ be the quadruplet kernel defined by
\begin{equation*}
    h_{p,q}(Y_{(i_1,...,i_{p};j_1,...,j_{q})}) = \prod_{u = 1}^{p} \prod_{v = 1}^{q} Y_{i_u j_v}.
\end{equation*}
Then $\mathbb{E}[h_{p,q}(Y_{(1,...,p;1,...,q)})] = \lambda^{pq} F_q^p G_p^q$.
\label{lem:product_pq_kernel}
\end{lemma}

\begin{proof}
  Direct derivation gives
  \begin{equation*}
      \begin{split}
          \mathbb{E}[h_{p,q}(Y_{(1,...,p;1,...,q)})] &= \mathbb{E}[\prod_{i = 1}^{p} \prod_{j = 1}^{q} Y_{ij}] \\
          &= \mathbb{E}[\mathbb{E}[\prod_{i = 1}^{p} \prod_{j = 1}^{q} Y_{i j}|\xi, \eta]] \\
          &= \mathbb{E}[\prod_{i = 1}^{p} \prod_{j = 1}^{q} \mathbb{E}[Y_{i j}|\xi, \eta]] \\
          &= \mathbb{E}[\prod_{i = 1}^{p} \prod_{j = 1}^{q} \lambda f(\xi_i) g(\eta_j)] \\
          &= \lambda^{pq} \prod_{i = 1}^{p} \mathbb{E}[f(\xi_i)^q] \prod_{j = 1}^{q} \mathbb{E}[g(\eta_j)^p] \\
          &= \lambda^{pq} F_q^p G_p^q.
      \end{split}
  \end{equation*}
\end{proof}

\section{Some $U$-statistics written with matrix operations \label{app:matrix_operation}}

We denote for all $k \in \mathbb{N}$, $Y^{\odot k}$ the matrix (or vector) $Y$ elevated to the element-wise power $k$, i.e. $Y^{\odot k}_{ij} = Y_{ij}^k$ for all $i$ and $j$. 

Following formula~\eqref{eq:ustat_matrix_operation}, we write all the quadruplet $U$-statistics considered in the examples described in Sections~\ref{subsec:est} and \ref{subsec:comp} as simple operations on matrices. $U^{h_1}_{N}$ and $U^{h_2}_{N}$ are already given in these sections and
\begin{equation*}
\begin{split}
    U^{h_3}_{N} &= \frac{1}{m_N(m_N-1) n_N } \left[ |Y_NY_N^T|_1 - \Trace(Y_NY_N^T)\right], \\
    U^{h_4}_{N} &= \frac{1}{m_N n_N(n_N-1)} \left[ |\tilde{Y}_N^T\tilde{Y}_N|_1 - \Trace(\tilde{Y}_N^T\tilde{Y}_N)\right], \\
    U^{h_5}_{N} &= \frac{1}{m_N n_N} |Y_N|_1, \\
    U^{h_6}_{N} &= \frac{1}{m_N n_N(n_N-1)} \left[ |\tilde{Y}_N^TY_N|_1 - \Trace(\tilde{Y}_N^TY_N)\right].
\end{split}
\end{equation*}
where $\Trace$ is the trace operator and $\tilde{Y}$ is defined by $\tilde{Y}_{ij} = Y_{ij}^2 - Y_{ij}$.

The motif-counting $U$-statistic of Section~\ref{subsec:motifs} can be written as 
\begin{equation*}
\begin{split}
    U^{h_7}_N &= \frac{1}{m_N(m_N-1) n_N(n_N-1)} \left[ |Y_N^TY_NY_N^T|_1 - |(Y_N^{\odot 2})^T Y_N|_1 - |Y_N^{\odot 2} Y_N^T|_1 + \Trace(Y_N^{\odot 2} Y_N^T) \right. \\
    & \left. - \Trace(Y_N^TY_NY_N^TY_N) + |(Y_N^{\odot 2})^T Y_N^{\odot 2}|_1 + |Y_N^{\odot 2} (Y_N^{\odot 2})^T|_1 - \Trace((Y_N^{\odot 2})^T Y_N^{\odot 2}) \right].
\end{split}
\end{equation*}
The kernels $h_{p,q}$ are not quadruplet kernels, but they can also be simply computed if $p=1$ or $q=1$. We define respectively $r(Y_N)$ and $c(Y_N)$ the vector of row sums (degrees) and the vector of column sums (degrees) of the matrix $Y_N$. For all $1 \le i \le m_N$, $r(Y_N)_i = \sum_{j=1}^{n_N} Y_{ij}$ and for all $1 \le j \le n_N$, $c(Y_N)_j = \sum_{i=1}^{m_N} Y_{ij}$.
\begin{equation*}
\begin{split}
    U^{h_{1,p}}_{N} &= \left[m_N\binom{n_N}{p}\right]^{-1} \sum_{i=1}^{m_N} \binom{r(Y_N)_i}{p}, \\
    U^{h_{q,1}}_{N} &= \left[\binom{m_N}{q} n_N\right]^{-1} \sum_{j=1}^{n_N} \binom{c(Y_N)_j}{q}.
\end{split}
\end{equation*}
We also notice that 
\begin{equation*}
\begin{split}
    U^{h_{1,1}}_{N} &= U^{h_5}_N, \\
    U^{h_{1,2}}_{N} &= U^{h_1}_N, \\
    U^{h_{2,1}}_{N} &= U^{h_3}_N.
\end{split}
\end{equation*}

\end{appendix}

\section*{Acknowledgements}

The author thanks St\'ephane Robin (Sorbonne Universit\'e), Sophie Donnet (INRAE) and Fran\c{c}ois Massol (CNRS) for many fruitful discussions and insights. This work was funded by a grant from R\'egion \^Ile-de-France and by the grant ANR-18-CE02-0010-01 of the French National Research Agency ANR (project EcoNet).

\bibliographystyle{imsart-nameyear}
\bibliography{biblio.bib}

\end{document}